\newtheoremstyle{remboldstyle}
  {}{}{\itshape}{}{\bfseries}{.}{.5em}{{\thmname{#1 }}{\thmnumber{#2}}{\thmnote{ (#3)}}}
\theoremstyle{remboldstyle}
\newtheorem{thm}{Theorem}[section]
\newtheorem{prop}[thm]{Proposition}
\newtheorem{lem}[thm]{Lemma}
\newtheorem{cor}[thm]{Corollary}
\theoremstyle{definition}
\newtheorem{definition}[thm]{Definition}
\newtheorem{rem}[thm]{Remark}
\newtheorem{notation}[thm]{Notation}
\newtheorem{thmx}{Theorem}
\numberwithin{equation}{section}
\def\Bbb{\mathbb}
\def\reals{\Bbb R}
\def\disk{\Bbb D}
\newcommand{\Chat}{\widehat{\mathbb{C}}}
\DeclareMathOperator{\dist}{dist} 
\begin{document}
\epstopdfsetup{outdir=./}


\title[A Geometric Approach to Polynomial and Rational Approximation]{A Geometric Approach to Polynomial and Rational Approximation}



\subjclass{Primary: 30C10, 30C62, 30E10,  Secondary: 41A20}
\keywords{uniform approximation, polynomials, rational functions, Blaschke products, Runge's Theorem, Weierstrass's Theorem, Mergelyan's Theorem}
\author {Christopher J. Bishop}
\address{C.J. Bishop\\
         Mathematics Department\\
         Stony Brook University \\
         Stony Brook, NY 11794-3651}
\email {bishop@math.stonybrook.edu}
\author{Kirill Lazebnik}
\address{Kirill Lazebnik\\
Mathematics Department \\
University of North Texas \\
Denton, TX, 76205}
\email{Kirill.Lazebnik@unt.edu}
\thanks{\noindent       The  first author is partially supported       by NSF Grant DMS 1906259.       }





\begin{abstract} We strengthen the classical approximation 
theorems of Weierstrass, Runge and Mergelyan
by showing the polynomial 
and rational approximants can be taken to have a simple geometric structure. In particular, when approximating a function $f$ on a compact set $K$,  the critical points of our approximants may be taken to lie in any given domain containing $K$, and all the critical values in any given neighborhood of the polynomially convex hull of $f(K)$.\end{abstract}


\maketitle

\vspace{-2mm}


\vspace{-10mm}

\section{Introduction}

The following is Runge's classical theorem on polynomial approximation.

\begin{thm}\label{classical_runge} \cite{MR1554664} Let $f$ be a function analytic on a neighborhood of a compact set $K\subset\mathbb{C}$, and suppose $\mathbb{C}\setminus K$ is connected. For all $\varepsilon>0$, there exists a polynomial $p$ so that 
\begin{equation}\nonumber ||f-p||_K := \sup_{z\in K}|f(z)-p(z)| <\varepsilon. \end{equation}
\end{thm}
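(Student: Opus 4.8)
The plan is to follow the classical two-step strategy: first approximate $f$ on $K$ by rational functions whose poles lie off $K$, and then approximate each such rational function by polynomials, using the connectedness hypothesis at the second stage.

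For the first step, I would choose a cycle $\Gamma$ lying in the region of analyticity of $f$ but disjoint from $K$, with winding number one about each point of $K$ and winding number zero about each point outside the domain of analyticity, so that the Cauchy integral formula gives $f(z) = \frac{1}{2\pi i}\int_\Gamma \frac{f(w)}{w-z}\,dw$ for every $z \in K$. Since $\dist(K,\Gamma) > 0$, the integrand is jointly continuous in $(w,z)$ and the Riemann sums converge uniformly on $K$; hence for any $\varepsilon > 0$ there is a rational function $R(z) = \sum_j \frac{c_j}{w_j - z}$ with all poles $w_j$ on $\Gamma$, in particular $w_j \notin K$, such that $\|f - R\|_K < \varepsilon/2$.

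The second step is a \emph{pole-pushing} argument, and it suffices to show that for each fixed $a \notin K$ the function $z \mapsto (a-z)^{-1}$ can be approximated uniformly on $K$ by polynomials; then so can any polynomial in $(a-z)^{-1}$, hence so can $R$. The key lemma is local: if $(a-z)^{-1}$ is uniformly approximable on $K$ by polynomials and $|b - a| < \dist(a, K)$, then so is $(b-z)^{-1}$, because on $K$ one has the uniformly convergent expansion $\frac{1}{b-z} = \sum_{n \ge 0} \frac{(a-b)^n}{(a-z)^{n+1}}$, and each term is a polynomial in $(a-z)^{-1}$. Separately, when $|a|$ is large enough the geometric expansion $\frac{1}{a-z} = \sum_{n \ge 0} z^n / a^{n+1}$ converges uniformly on the bounded set $K$, so $(a-z)^{-1}$ is approximable by polynomials whenever $a$ is near $\infty$.

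To finish I would invoke the hypothesis that $\mathbb{C}\setminus K$ is connected: join a given pole $w_j$ to some point $a_0$ near $\infty$ by a path $\gamma \subset \mathbb{C}\setminus K$, set $\delta = \dist(\gamma, K) > 0$, and pick points $a_0, a_1, \dots, a_m = w_j$ along $\gamma$ with consecutive points within distance $\delta$; applying the local lemma $m$ times propagates polynomial approximability from $a_0$ to $w_j$. Doing this for each $j$ and summing gives a polynomial $p$ with $\|R - p\|_K < \varepsilon/2$, whence $\|f - p\|_K < \varepsilon$. The main obstacle is the pole-pushing lemma together with the compactness and connectedness bookkeeping along $\gamma$: this is exactly the place where the topological hypothesis enters, and without it one only obtains rational approximants whose poles can be confined to, but not removed from, the bounded complementary components of $K$.
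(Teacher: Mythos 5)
The paper does not give a proof of this theorem at all: it is cited to Runge's original 1885 paper as classical background, and the paper's actual work is to prove the strengthened Theorem~\ref{main_theorem} (via Blaschke products, quasiconformal folding, and the Measurable Riemann Mapping Theorem), from which Theorem~\ref{classical_runge} follows as a special case. Your argument is the standard textbook proof: represent $f$ on $K$ by the Cauchy integral over a cycle $\Gamma$ separating $K$ from the complement of the domain of analyticity, approximate the integral by Riemann sums to obtain a rational function with poles on $\Gamma$, and then push each pole to $\infty$ along a path in the connected set $\C\setminus K$ using the local lemma about re-expanding $(b-z)^{-1}$ in powers of $(a-z)^{-1}$. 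This is correct and is a genuinely different, far more elementary route than the one the paper takes; what the paper's heavier geometric machinery buys is control over the critical points and critical values of the polynomial approximant, which the pole-pushing argument gives no handle on whatsoever. Two small points worth tightening: the existence of a cycle $\Gamma$ in the domain of analyticity with winding number $1$ about every point of $K$ is itself a nontrivial lemma (the Cauchy integral theorem for compact sets) and should be cited or sketched rather than just asserted; and the successive points $a_0,\dots,a_m$ along $\gamma$ should be spaced at distance strictly less than $\delta=\dist(\gamma,K)$ (say $\delta/2$), since the series $\sum_{n\ge 0}(a-b)^n/(a-z)^{n+1}$ needs $|a-b|<\dist(a,K)$ with a uniform ratio to converge uniformly on $K$.
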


This famous result does not say much about what the polynomial approximant $p$ looks like off the compact set.  For various applications, it would be useful to understand the global behavior of $p$ and, in particular, the location of the critical points and values of $p$. To this end, we state our first result (Theorem \ref{main_theorem} below) after introducing the following notation.

\begin{notation} For any compact set $K\subset\mathbb{C}$ we
denote the $\varepsilon$-neighborhood of $K$ by $N_\varepsilon K:=\{z : \inf_{w\in K}|z-w|<\varepsilon\}$, and we denote by $\widehat{K}$ the union of $K$ with
all bounded components of $\mathbb{C}\setminus K$ (this is usually called the \emph{polynomially convex hull} of $K$: see \cite{MR1482798}). 
We say $K$ is \emph{full} if $\mathbb{C}\setminus K$ is connected.
We let  $\textrm{CP}(f)$ denote the set of critical points of
an analytic function $f$, and  let $\textrm{CV}(f):=f(\textrm{CP}(f))$ 
denote its critical values. 
A \emph{domain} in $\mathbb{C}$ is an open, connected subset of $\mathbb{C}$. 
\end{notation}


\begin{thmx}\label{main_theorem} {\bf (Polynomial Runge$+$)} \emph{ Let $K\subset\mathbb{C}$ be compact and full, $\mathcal{D}$ a domain containing $K$, and suppose $f$ is a function analytic in a neighborhood of $K$. Then for all $\varepsilon>0$, there exists a polynomial $p$ so that $||p-f||_{K}<\varepsilon$  and: }
\begin{enumerate}
 \item $\textrm{CP}(p)\subset \mathcal{D}$, 
 \item $\textrm{CV}(p) \subset N_\varepsilon\widehat{f(K)}$.
\end{enumerate} 
\end{thmx}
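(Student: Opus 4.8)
\smallskip
\noindent\emph{Proof strategy.} The plan is to start from a Runge approximant of $f$ and then ``repair'' its critical structure by a quasiconformal surgery. By Theorem~\ref{classical_runge}, fix a polynomial $q$ with $\|q-f\|_K<\varepsilon/3$; taking $q$ close enough to $f$ we may also assume $\widehat{q(K)}\subset N_{\varepsilon/3}\widehat{f(K)}$, so that it suffices to approximate the \emph{polynomial} $q$ on $K$, to within $2\varepsilon/3$, by a polynomial $p$ with $\textrm{CP}(p)\subset\mathcal D$ and $\textrm{CV}(p)\subset N_{\varepsilon/3}\widehat{q(K)}$. Since $\widehat{q(K)}$ is full we may fix a bounded full domain $V$ with $\widehat{q(K)}\subset V$ and $\overline V\subset N_{\varepsilon/3}\widehat{q(K)}$, whose boundary is a smooth Jordan curve carrying no critical value of $q$; then $\mathbb C\setminus\overline V$ is a topological annulus. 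We also fix a bounded domain $\mathcal D_0$ with $K\subset\mathcal D_0$ and $\overline{\mathcal D_0}\subset\mathcal D$.

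The core of the argument is the construction of a \emph{model map}: a Jordan domain $\Omega_0$ with $K\subset\Omega_0$, and a quasiregular map $g\colon\mathbb C\to\mathbb C$, proper of some finite degree $n$, such that (i) $g|_{\Omega_0}$ is holomorphic and extends to a proper branched covering $\overline{\Omega_0}\to\overline V$ of degree $n$ satisfying $\|g-q\|_K<\varepsilon/3$, all of whose critical points lie in $\mathcal D_0$; and (ii) $g(\mathbb C\setminus\Omega_0)\subset\mathbb C\setminus V$, with $g|_{\mathbb C\setminus\Omega_0}$ a degree-$n$ quasiconformal interpolation, over the annulus $\mathbb C\setminus\overline V$, between the boundary map $g|_{\partial\Omega_0}$ and the model $w\mapsto w^n$ near $\infty$. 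Condition (ii) forces the complex dilatation $\mu_g$ to be supported in the compact set $\mathbb C\setminus\Omega_0$ --- hence away from $K$ --- and makes $g$ proper of degree $n$ as a self-map of $\mathbb C$. Given such a $g$, the measurable Riemann mapping theorem yields a quasiconformal homeomorphism $\phi\colon\mathbb C\to\mathbb C$ fixing $\infty$ with dilatation $\mu_\phi=\mu_g$, and then $p:=g\circ\phi^{-1}$ is holomorphic on $\mathbb C$ and proper of degree $n$, hence a polynomial of degree $n$. By carrying out the interpolation in a thin collar we can make the support of $\mu_g$ of arbitrarily small area (with bounded, though not small, dilatation), so that $\phi$ is uniformly close to the identity; since $\mu_\phi=0$ on $\Omega_0$, $\phi$ is moreover conformal there. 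It then follows that $\|p-q\|_K\le\|g\circ\phi^{-1}-g\|_K+\|g-q\|_K<2\varepsilon/3$, whence $\|p-f\|_K<\varepsilon$; moreover $\textrm{CP}(p)=\phi(\textrm{CP}(g))\subset\phi(\mathcal D_0)\subset\mathcal D$, and $\textrm{CV}(p)=\textrm{CV}(g)=g(\textrm{CP}(g))\subset g(\Omega_0)=V\subset N_\varepsilon\widehat{f(K)}$, which is what is wanted.

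The main obstacle is condition (i): producing the proper holomorphic branched cover $\overline{\Omega_0}\to\overline V$ --- a finite Blaschke product, after uniformizing $\Omega_0$ and $V$ onto the disk --- that remains $\varepsilon/3$-close to $q$ on $K$ while having \emph{all} of its critical points in the prescribed set $\mathcal D_0$. One cannot simply take $\Omega_0$ to be a component of $q^{-1}(V)$ and $g=q$: already for $q(z)=z^2$ with $K=\{|z|=1\}$ and $\mathcal D$ a thin annular neighborhood of $K$, a comparison of the turning numbers of the curves $p(\{|z|=r\})$ for small $r$ and for $r=1$ shows that \emph{any} admissible $p$ must carry a critical point near $K$, which $q$ does not have there and which is therefore forced to lie in $\mathcal D$; thus $g$ must genuinely fold $q$ within a collar contained in $\mathcal D_0$, and this in general forces $n=\deg p$ to be substantially larger than $\deg q$. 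Building such a folded branched covering --- matching on $\partial\Omega_0$ a boundary map of the degree $n$ required for the exterior gluing in (ii), distributing the extra branching into $\mathcal D_0$, and controlling both the dilatation and the area of its support --- is precisely the type of combinatorial--geometric construction supplied by the theory of quasiconformal folding, and is where the ``geometric'' content of the theorem resides.
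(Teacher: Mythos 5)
Your strategy is essentially the paper's: approximate $f$ on $K$ by a proper holomorphic map of a domain $\Omega_0\subset\mathcal D$ onto a small neighborhood $V$ of $\widehat{f(K)}$; extend to a quasiregular self-map $g$ of the sphere with dilatation supported off $\Omega_0$; straighten with the MRMT; and drive the area of $\mathrm{supp}(\mu_g)$ to zero with dilatation bounded uniformly in the degree, so the straightening map is close to the identity. The initial Runge reduction (replacing $f$ by a polynomial $q$) is harmless but unnecessary: the Carath\'eodory--Grunsky theorem supplies the interior proper approximant directly from $f$, so nothing is gained by it.

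The genuine gap is the clause you yourself flag as the hard part: constructing the quasiregular model $g$ with uniformly bounded dilatation on a support of arbitrarily small area. This is where essentially all the work lies, and ``the theory of quasiconformal folding'' is not a black box that delivers it without substantial input tailored to this situation. Concretely, three things are needed, and none is supplied by the outline. First, the interior proper map $B:\Omega_0\to\mathbb D$ exists by Carath\'eodory/Grunsky, but $|B'|$ on $\partial\Omega_0$ can oscillate wildly, so an exterior interpolation between $B|_{\partial\Omega_0}$ and $w\mapsto w^n$ in a thin collar would not have dilatation bounded independently of the degree; the paper fixes this by post-composing with the special Blaschke products $\mathcal B_{n,\delta,c}$ of Definition~\ref{Blaschke_family} to force $|B_n'|\asymp n$ on the boundary uniformly (Theorem~\ref{Caratheodory_for_application}). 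Second, even after this normalization the $B_n$-preimages of $\pm1$ on $\partial\Omega_0$ are only \emph{comparably} spaced, not evenly spaced, so a direct interpolation to the evenly-spaced model $w\mapsto w^n$ still fails; this is precisely what Bishop's folding (Lemma~5.1 of \cite{Bis15}, invoked here as Theorem~\ref{complicated_folding_adjustment}) is for, attaching ``decoration'' trees at the boundary vertices so that, after a quasiconformal change of coordinate, the edges become equal-length. Third, the folding introduces new branch points along $\partial\Omega_0$, with branch values on $\partial V$ rather than in $V$, so it is not literally true that $\mathrm{CP}(g)\subset\Omega_0$ and $\mathrm{CV}(g)\subset V$ as you assert; one gets only $\mathrm{CV}(g)\subset\overline V$, which suffices, but the matching of two-sided boundary limits across the tree edges (Theorem~\ref{complicated_folding_adjustment}(3), Proposition~\ref{edgesonO}) and the verification that the resulting $g$ is globally quasiregular is a nontrivial part of the proof that the outline omits. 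In short, the outline is the right roadmap, but the proposal stops where the proof begins.
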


We remark that no relation is assumed between the domain $\mathcal{D}\supset K$ and the neighborhood of $K$ in which $f$ is analytic. Analogous improvements of the polynomial approximation
theorems of Mergelyan and Weierstrass will be stated and 
proved in Section \ref{main_proofs_section} (see Theorem \ref{Mergelyan} and Corollary \ref{Weierstrass}). 
When $K$ is not full, uniform approximation by polynomials is
not always possible, and so we turn to rational approximation. 
We denote the  Hausdorff distance between two sets $X$, $Y$, 
by $\textrm{d}_H(X,Y)$.

\begin{thmx}\label{rational_runge} {\bf (Rational Runge$+$)} 
\emph{ Let $K\subset\mathbb{C}$ be compact, $\mathcal{D}$ a domain containing $K$, $f$ a function analytic in a neighborhood of $K$, and suppose $P\subset\Chat\setminus K$ contains exactly one point from each component of $\Chat\setminus K$. Then there exists $P'\subset P$ so that for all $\varepsilon>0$, there is a rational function $r$ so that $||r-f||_{K}<\varepsilon$ and: }
\begin{enumerate}
\item $\textrm{d}_H(r^{-1}(\infty),P')<\varepsilon$ and $|r^{-1}(\infty)|=|P'|$,
 \item $\textrm{CP}(r)\subset \mathcal{D}$, 
 \item $\textrm{CV}(r) \subset N_\varepsilon\widehat{f(K)}$.
\end{enumerate} 
\end{thmx}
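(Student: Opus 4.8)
The plan is to reduce, in stages, to the polynomial case of Theorem~\ref{main_theorem} and to the construction underlying its proof.

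\emph{Step 1: finitely many poles, and the choice of $P'$.} Let $\Omega\supset K$ be open with $f$ analytic on $\Omega$. Only finitely many components of $\widehat{\mathbb C}\setminus K$ meet $\widehat{\mathbb C}\setminus\Omega$: if infinitely many bounded ones $V_n$ did, pick $z_n\in V_n\setminus\Omega$; since the $V_n$ are disjoint open sets we have $\operatorname{diam}V_n\to 0$, so a subsequential limit $z^*$ of the $z_n$ lies in $\widehat{\mathbb C}\setminus\Omega$, while $\partial V_n\subset K$ forces $z^*\in K\subset\Omega$, a contradiction. On the remaining ("classically irrelevant") components $V\subset\Omega$ the function $f$ is defined and analytic, with only finitely many critical points in their (relatively compact) union. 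Let $P'\subset P$ consist of the designated points of the finitely many components $V$ for which either $V\not\subset\Omega$ or $f$ has a critical point in $V\setminus\mathcal D$; these are the holes in which the approximant must carry a pole --- for approximation in the first case, and in the second to prevent a critical point from being trapped in $\mathbb C\setminus\mathcal D$, since on any pole-free hole on which $f$ extends the maximum principle forces $r$ to nearly equal $f$. Then $P'$ is finite and determined by $K,\Omega,f,\mathcal D$, hence independent of $\varepsilon$. Finally, conjugating $r,f,K,\mathcal D,P$ by a M\"obius transformation leaves $\widehat{f(K)}$ unchanged and conditions (1)--(3) equivariant, so I may also assume $\infty\notin P'$.

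\emph{Step 2: the mechanism, seen in the one-pole case.} If $P'=\{p_1\}$, then after the M\"obius map $z\mapsto 1/(z-p_1)$ the set $K$ becomes \emph{full}, $f$ becomes analytic near it, and the image of $\mathcal D$ is a domain containing it; Theorem~\ref{main_theorem} then produces a polynomial which pulls back to the desired $r$ (its only pole, the image of the polynomial's critical point at $\infty$, is not an analytic critical point, and a constant is perturbed away). The phenomenon that must be reproduced in general already appears for $f=z^{2}$, $K=\{|z|=1\}$, and $\mathcal D$ a thin annulus about $|z|=1$: here $r(z)=z^{2}+\tfrac{\varepsilon}{2}\,z^{-k}$ with $k\asymp 1/\varepsilon$ meets every requirement --- the pole of order $k$ at $0$ "fills" the hole $\{|z|<1\}$, pushing all $k+2$ critical points out to a circle of radius $\approx 1$ inside $\mathcal D$, and keeping the critical values within $\varepsilon$ of $\overline{\mathbb D}=\widehat{f(K)}$. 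Thus in general each $p_j\in P'$ should carry a pole of \emph{high} order, growing with $1/\varepsilon$, with all the excess ramification forced by the resulting high degree confined to a thin collar of $\partial K$ inside $\mathcal D$ and its critical values pushed into $N_\varepsilon\widehat{f(K)}$.

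\emph{Step 3: the general construction.} For $|P'|=m\ge 2$ the topology cannot be trivialized by one coordinate change, and --- this deserves emphasis --- one cannot merely add the one-pole approximants coming from a Laurent/Runge decomposition $f=f_0+\cdots+f_m$, since the critical points and values of a sum of rational maps are not controlled by those of the summands. Instead I would carry out the construction behind Theorem~\ref{main_theorem} directly over the multiply connected complement of $K$: start from a classical rational Runge approximant $R$ of $f$ whose poles lie among $P'$; keep $R$ on a neighborhood of $K$ and near $\partial\mathcal D$; and over $\mathcal D\setminus K$ and inside each hole $V_j$ replace $R$ by a quasiregular "folding" interpolation that installs a pole of the appropriate (high) order near $p_j$ with no further ramification between $p_j$ and $K$, and routes all the excess critical points into $\mathcal D$ with critical values in $N_\varepsilon\widehat{f(K)}$. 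Straighten the resulting quasiregular model to a rational map $r$ by the measurable Riemann mapping theorem, with the Beltrami coefficient supported off $K$ and small enough (shrinking it by an auxiliary iteration if necessary) that the straightening homeomorphism is $C^{0}$-close to the identity on $K$, on the relevant part of $\mathcal D$, and near $P'$; this yields $\|r-f\|_{K}<\varepsilon$ together with (1)--(3).

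\emph{Main obstacle.} The delicate point is the design of the quasiregular folding model over the transition annuli in $\mathcal D\setminus K$ and inside the holes, so that the prescribed pole orders, the inclusions $\operatorname{CP}(r)\subset\mathcal D$ and $\operatorname{CV}(r)\subset N_\varepsilon\widehat{f(K)}$, and the normalization $\textrm{d}_H(r^{-1}(\infty),P')<\varepsilon$ all hold at once, together with verifying that the quasiconformal straightening perturbs none of them beyond tolerance. Should the proof of Theorem~\ref{main_theorem} proceed by more elementary means, the same scheme applies: the one-pole reduction of Step 2 is the conceptual core, and the rest is its localization to each complementary component of $K$.
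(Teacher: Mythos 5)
Your overall framework---a quasiregular model that matches $f$ near $K$, installs poles in the designated holes, confines the extra ramification to a thin collar of $\partial K$ inside $\mathcal{D}$, and is then straightened by the measurable Riemann mapping theorem---is indeed the paper's route. Your Step 1 is fine (the paper simply shrinks the domain of analyticity $U$ into $\mathcal{D}$ before defining $P'$, which makes your extra clause about $\textrm{CP}(f)\cap(V\setminus\mathcal{D})$ automatic), and your Step 2 M\"obius reduction of the one-finite-pole case to Theorem \ref{main_theorem} works once you also fill in the pole-free holes so the pushed-forward compact is full; note, though, that the paper's logical order is the reverse (Theorem \ref{main_theorem} is deduced from Theorem \ref{rational_runge}), and, as you acknowledge, this reduction does not localize.

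The genuine gap is in Step 3, and the fix you sketch is not on the right track. Starting from a classical Runge approximant $R$ and ``folding'' it in a transition collar cannot work, because $R$ has no controlled behavior on $\partial U$---there is nothing to fold against. The missing idea is to first approximate $f$ by a \emph{proper} holomorphic map $B:U\to\mathbb{D}$ (Grunsky's theorem, Theorem \ref{Grunsky_thm}; in the simply connected case, Carath\'eodory's finite Blaschke product theorem): properness forces $B(\partial D_i)\subset\mathbb{T}$, and that is precisely what equips $\partial U$ with the graph structure the folding construction requires. Beyond that, the construction needs (i) $|B_n'|\asymp n$ uniformly on $\partial U$, achieved by post-composing with the Blaschke products $\mathcal{B}_{n,\delta,c}$ (Proposition \ref{Blaschke_props}), so that the folding quasiconstant is $n$-independent and the edge diameters shrink uniformly; (ii) an actual mechanism for making the Beltrami support small, which comes from the vertex-supported property together with $\max_e\textrm{diam}(e)\to 0$ (Propositions \ref{max_diam_omega}, \ref{phi_close_to_id_prop})---your ``auxiliary iteration'' is not a substitute; and (iii) the explicit annular interpolations matching the power map outside, the proper map on each $\partial D_i$, and the identity near $P'$, along the connecting arcs $\Gamma_i$ (Sections \ref{extension_section}--\ref{extension_section3}). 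You correctly flag this as the main obstacle, but the proposal supplies neither the proper-map approximation that makes folding possible nor the quantitative estimates that make the straightening harmless.
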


The behavior of $p$ off $K$ is of particular interest
in applications, such as in complex dynamics where
approximation results have been used to prove 
the existence of various dynamical behaviors for
entire functions (see, for example, 
\cite{EL87}, \cite{2020arXiv201114736E}, 
\cite{MR4375923},  
\cite{2021arXiv210810256M}, \cite{2022arXiv220411781M}, \cite{MR4458398}).
However, not understanding the critical points and values of $p$ means 
it has not been known whether these behaviors can occur
within restricted classes of entire functions, such as
the well studied Speiser or Eremenko-Lyubich classes 
(see the survey \cite{MR3852466}). 

We now briefly describe our approach. Let $\Omega\subset\mathbb{C}$ be a finitely connected domain with analytic boundary, and $f: \Omega \rightarrow \mathbb{D}$ analytic. By a theorem of Grunsky, $f$ can be approximated on any compact subset of $\Omega$ by a \emph{proper} holomorphic map $B: \Omega\rightarrow\mathbb{D}$ (recall proper means that the continuous extension of $B$ to $\partial\Omega$ satisfies $B(\partial\Omega)=\mathbb{T}:=\{z: |z|=1\}$). Grunsky's proof (see Lemma 4.5.4 of \cite{MR0463413}) uses a Riemann sum to approximate an integral representation of $f$ involving the Green's function on $\Omega$. Another approach can be found in \cite{Khavinsonref}. When $\Omega$ is simply connected, $B$ is a Blaschke product (up to a change of coordinates), and in this case the result is due to Carath\'eodory \cite{MR0064861}, with a much simpler proof based on power series. We provide several refinements of these results in \cite{BL_in_prep}, although the theorems of Grunsky and Carath\'eodory will suffice for the purposes of this manuscript.




Thus the function $f$ in Theorem \ref{main_theorem} or \ref{rational_runge} can be approximated on $K$ by a holomorphic map $B$ defined in a union $D:=\cup_iD_i\supset K$ of pairwise disjoint domains $(D_i)_{i=1}^k$, so that $B$ is proper in each $D_i$. Our approach in this manuscript is to extend $B$ from $D$ to a quasiregular mapping $g:\Chat\rightarrow\Chat$ with specified poles. The Measurable Riemann Mapping Theorem (MRMT for brevity) will then imply that there is a quasiconformal mapping $\phi$ so that $g\circ\phi^{-1}$ is rational. The bulk of the work in this paper will be to show that there exists a quasiregular extension $g$ of $B$ so that $\{z: g_{\overline{z}}(z)\not=0\}$ is sufficiently small (in a suitable sense) so as to imply $\phi(z)\approx z$ and hence  $g\circ\phi^{-1}=B\circ\phi^{-1}\approx B \approx f$ on $K$. We remark that our techniques build on the quasiconformal folding methods of the first author \cite{Bis15}. 

This approach yields not only information on the critical points and values of the approximants as in Theorems \ref{main_theorem} and \ref{rational_runge}, but more broadly a detailed description of the geometric structure of these approximants. We end the introduction by describing this geometric structure in a few cases. First we  introduce some more notation.

\begin{notation}\label{RMT_notation} Let $V\subset\Chat$ be a simply connected domain so that $\infty\not\in\partial V$. We let $\psi_V: \mathbf{E} \rightarrow V$ denote a Riemann mapping, where $\mathbf{E}=\mathbb{D}$ if $V$ is bounded and $\mathbf{E}=\mathbb{D}^*:=\Chat\setminus\overline{\mathbb{D}}$ if $V$ is unbounded, in which case we specify $\psi_V(\infty)=\infty$.
\end{notation}

First consider the case when $K$ is full and connected,
and $f$ is holomorphic in a neighborhood of $K$ satisfying
$||f||_K<1$. Let $\Omega$, $\Omega'$ be analytic Jordan domains
containing $K$, $f(K)$, respectively, so that $f$ is 
holomorphic in $\Omega$. Then the mapping 
\begin{equation}\nonumber
F:=\psi_{\Omega'}^{-1} \circ f \circ \psi_\Omega:\disk \to \disk
\end{equation}
is holomorphic, and by the aforementioned theorem of Carath{\'e}odory, there 
is a finite Blaschke product $b: \mathbb{D} \rightarrow \mathbb{D}$ 
that approximates $F$ on the compact set $\psi_\Omega^{-1}(K)$. Therefore 
\begin{equation} \nonumber 
B:= \psi_{\Omega'}\circ b \circ \psi_\Omega^{-1}: \Omega \rightarrow \Omega'
\end{equation}
is a holomorphic function that approximates $f$ on $K$, and
moreover $B$ restricts to an analytic, finite-to-$1$ map of
$\Gamma:=\partial\Omega$ onto $\Gamma':=\partial\Omega'$.

\begin{figure}
\centering
\scalebox{.3}{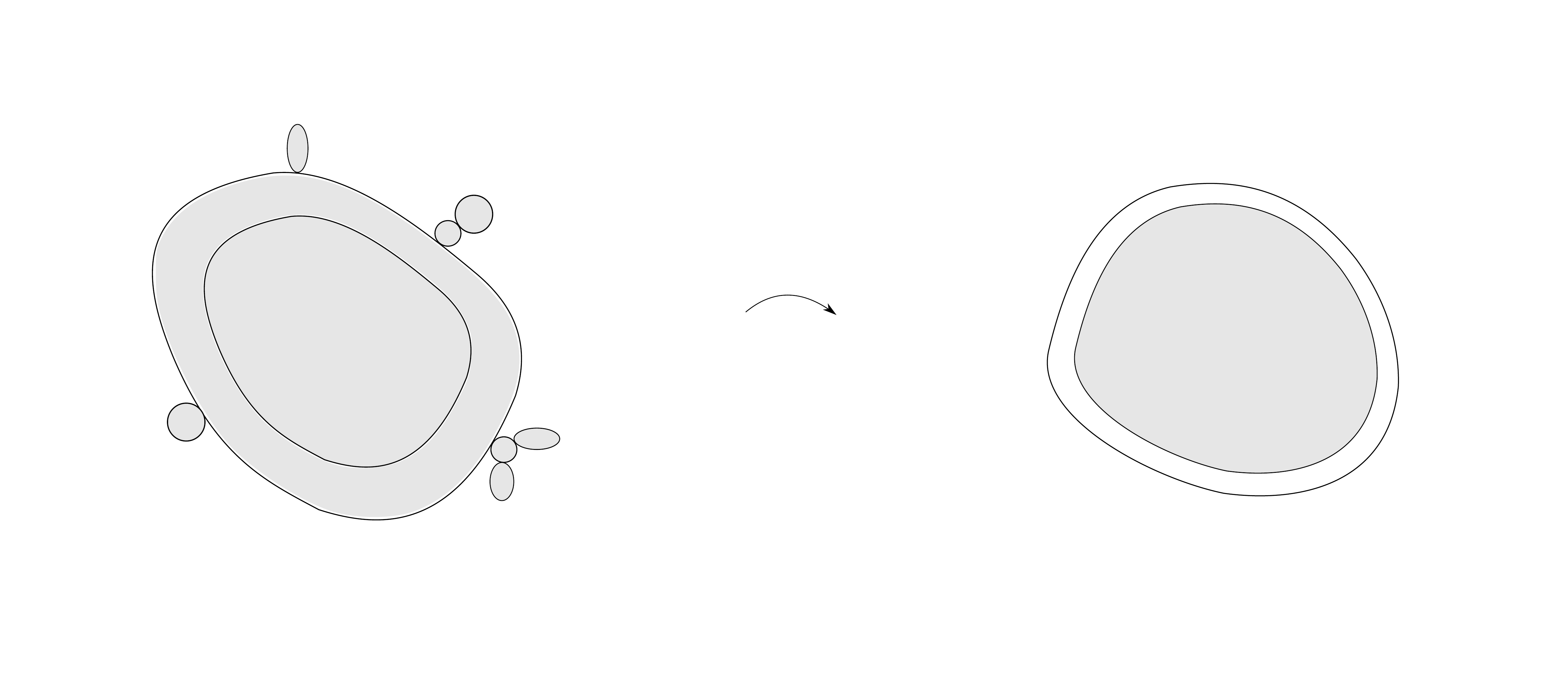}
\captionsetup{width=.9\textwidth}
\caption{This figure illustrates the geometry of the approximant $p$ in Theorem \ref{main_theorem} when $K$ is connected. The notation is explained in the text. Both domain and co-domain are colored so that regions with the same color correspond to one another under $p$. }
\label{ModelPoly}
\end{figure}

In this paper, we will show that $B$ can be approximated on $\Omega$ by a polynomial $p$ so that $p^{-1}(\Gamma')$ is an approximation of $\Gamma$. More precisely, $p^{-1}(\Gamma')$ is connected, and consists of a finite union of Jordan curves $\{ \gamma_j\}_0^n$ bounding pairwise disjoint Jordan domains $\{\Omega_j\}_0^n$ (see Figure \ref{ModelPoly}): the $\{\Omega_j\}_0^n$ are precisely the connected components of $p^{-1}(\Omega')$. There is one ``large'' component $\Omega_0$ that approximates $\Omega$ in the Hausdorff metric. The other components $\{\Omega_j\}_1^n$ can be made as small as we wish and to lie in any given neighborhood of $\partial K$. Moreover, the collection $\{\Omega_j\}_0^n$ forms a tree structure with any two boundaries $\partial\Omega_j$, $\partial\Omega_k$ either disjoint or intersecting at a single point, and with $\Omega_0$ as the ``root'' of the tree as in Figure \ref{ModelPoly}. Let $\Omega_\infty$ denote the unbounded component of $\mathbb{C}\setminus p^{-1}(\Gamma')$, so that 
\begin{equation}\label{decomp} \mathbb{C}\setminus p^{-1}(\Gamma')=\Omega_0\sqcup\left(\sqcup_{j=1}^n\Omega_j\right)\sqcup\Omega_\infty.\end{equation}
Recalling Notation \ref{RMT_notation}, the polynomial $p$ has the following simple structure with respect to the domains in (\ref{decomp}).
\begin{enumerate} 
\item $p(\Omega_0)=\Omega'$ and $\psi_{\Omega'}^{-1}\circ p\circ\psi_{\Omega_0}$ is a finite Blaschke product.
\item  $p(\Omega_j)=\Omega'$ and $p$ is conformal on $\Omega_j$ for $1\leq j \leq n$.
\item  $p(\Omega_\infty)=\mathbb{C}\setminus\overline{\Omega'}$ and $p=\psi_{\mathbb{C}\setminus\overline{\Omega'}}\circ(z\mapsto z^m)\circ\psi_{\Omega_\infty}^{-1}$ on $\Omega_\infty$ for $m=\textrm{deg}(p|_{\Omega_0})+n$.
\end{enumerate} 
In other words, up to conformal changes of coordinates, $p$ is simply a Blaschke product in $\Omega_0$, a conformal map in each $\Omega_j$, $1\leq j \leq n$, and a power map $z\mapsto z^m$ in $\Omega_\infty$. The only finite critical points of $p$ are either in $\Omega_0$, or at a point where two of the curves $(\gamma_j)_{j=1}^n$ intersect, in which case the corresponding critical value lies on $\partial\Omega'$.

Next suppose $K$ is connected, but $\mathbb{C}\setminus K$
has more than one component. In this case, in order to prove
Theorem \ref{rational_runge}, we will need to let $\Omega$ be
a multiply connected analytic domain containing $K$, and 
$\Omega'$ an analytic Jordan domain containing $f(K)$. By Grunsky's Theorem, there exists a 
proper map $b$ approximating 
$\psi_{\Omega'}^{-1}\circ f$ on $K$, so that $B:=\psi_{\Omega'}\circ b$
is a holomorphic map approximating $f$ on $K$, and $B$ restricts
to an analytic, finite-to-$1$ mapping of each component of 
$\Gamma:=\partial\Omega$ onto $\Gamma'$.
We will show $B$ can be approximated on $\Omega$ by a rational 
map $r$ so that each component of $\partial\Omega$ can be 
approximated by a component of $r^{-1}(\Gamma')$. These components of $r^{-1}(\Gamma')$
bound Jordan domains which form a decomposition of the plane as
in the previously described polynomial setting, and in the interior
of each such domain again $r$ behaves either as a proper mapping, a conformal mapping, or a power mapping
(up to conformal changes of coordinates). 

Lastly, the case when $K$ has more than one connected component
is more intricate, and we will leave the precise description to
later in the paper. 
(Briefly, quasiconformal folding is applied not just along 
the boundary of a neighborhood of $K$, but also  along 
specially chosen curves that 
connect different connected components of this neighborhood.)

We conclude the introduction by mentioning several related works. The location of $\textrm{CP}(p)$ in relation to the zeros of a polynomial $p$ is studied in the recent works \cite{MR3556367}, \cite{MR4423276}, and in \cite{MR2051397} the problem of approximation in $\mathbb{C}^n$ with prescribed critical points was studied.


\vspace{2mm}

\noindent \emph{Acknowledgements.} The authors would like to thank Dmitry Khavinson for pointing out the reference \cite{MR0463413} which simplified several arguments in a previous version of this manuscript. The authors would also like to thank Jack Burkart, Franc Forstneri\v{c}, John Garnett, Oleg Ivrii, Xavier Jarque, and Malik Younsi for their comments on this manuscript.

\section{Approximation by Proper Mappings}\label{blaschke_approximation2}

\begin{definition}\label{analytic_domain_definition} We call a domain $D\subset\mathbb{C}$ an \emph{analytic domain} if $D$ is finitely connected, and each component of $\partial D$ is an analytic Jordan curve.
 \end{definition}
 
 \noindent We remark that a boundary component of an analytic domain $D$ cannot be a single point.
 
\begin{definition}\label{proper_defn} Let $D$ be an analytic domain. We will call a continuous mapping $f:D \rightarrow \mathbb{C}$ \emph{proper} if $f(D)\subset\mathbb{D}$ and if for every compact $K\subset\mathbb{D}$, $f^{-1}(K)$ is a compact subset of $D$. 
\end{definition}

\begin{rem} Since $D$ in Definition \ref{proper_defn} is assumed to be an analytic domain, the map $f: D \rightarrow \mathbb{D}$ extends continuously to a map $f: \overline{D}\rightarrow\overline{\mathbb{D}}$, and it is straightforward to check that the map $f: D \rightarrow \mathbb{D}$ is proper if and only if $f(\partial D)\subset\mathbb{T}$.
\end{rem}

\noindent The following is Lemma 4.5.4 of \cite{MR0463413}: 

\begin{thm}\label{Grunsky_thm} Let $\varepsilon>0$, $D$ an analytic domain, $K\subset D$ compact, and $f: D \rightarrow\mathbb{D}$ holomorphic. Then there exists a proper map $B: D \rightarrow \mathbb{D}$ so that $||f-B||_K<\varepsilon$.
\end{thm}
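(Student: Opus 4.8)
The plan is to follow Grunsky's argument: approximate $f$ on $K$ by a finite product of elementary Blaschke-type factors built from the Green's function of $D$, obtained by discretizing an integral representation. The key tool is that, since $D$ is an analytic domain, its Green's function $g_D(\cdot,\cdot)$ exists, is symmetric, and extends smoothly (with its normal derivative) up to $\partial D$ off the diagonal; thus $P_D(z,\zeta):=-\tfrac{1}{2\pi}\partial_{n_\zeta}g_D(z,\zeta)$ is the Poisson kernel of $D$, and $g_D(z,\zeta-tn_\zeta)=2\pi t\,P_D(z,\zeta)+o(t)$ as $t\to0^+$, uniformly for $z$ in compact subsets of $D$, where $n_\zeta$ is the outer unit normal at $\zeta\in\partial D$.

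First I would reduce to $\|f\|_D\le M<1$: replacing $f$ by $\rho f$ with $\rho<1$ close to $1$ costs less than $\varepsilon/3$ in $\|\cdot\|_K$, and then $u:=-\log|f|\ge\log(1/M)>0$ is superharmonic on $D$. Fixing a compact neighborhood $\overline U\subset D$ of $K$ at the outset, the Riesz decomposition gives $u=\sum_j m_j\,g_D(\cdot,a_j)+h$ with $a_j$ the zeros of $f$ (multiplicities $m_j$) and $h\ge0$ harmonic; since the Green potential converges locally uniformly, $u\approx\sum_{j\le J}m_j\,g_D(\cdot,a_j)+h$ on $\overline U$. Writing $h=\int_{\partial D}P_D(\cdot,\zeta)\,d\nu(\zeta)$ with $\nu\ge0$, I would cut $\partial D$ into short arcs $E_\ell$ with centers $\zeta_\ell$ and, for $t_\ell:=\tfrac{\nu(E_\ell)}{2\pi}$ small, replace the mass $\nu(E_\ell)$ by a single \emph{unit-weight} node $w_\ell:=\zeta_\ell-t_\ell n_{\zeta_\ell}$ just inside $\partial D$, using $g_D(z,w_\ell)=\nu(E_\ell)\,P_D(z,\zeta_\ell)+o(\nu(E_\ell))$. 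A Riemann-sum estimate then yields $h\approx\sum_\ell g_D(\cdot,w_\ell)$, hence $u\approx\sum_{k=1}^N g_D(\cdot,p_k)$ on $\overline U$, where $\{p_k\}$ lists the $a_j$ (each $m_j$ times) together with the $w_\ell$, all of weight one.

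Next I would assemble the map. With $g_D^*(\cdot,w)$ a (possibly multivalued) harmonic conjugate in the first variable of $g_D(\cdot,w)$, set $B:=c\prod_{k=1}^N\exp\!\big(-(g_D(\cdot,p_k)+i\,g_D^*(\cdot,p_k))\big)$ for a unimodular constant $c$. Wherever $B$ is single-valued it is holomorphic on $D$ with $|B|=\exp(-\sum_k g_D(\cdot,p_k))$, which is $<1$ on $D$ and tends to $1$ on $\partial D$; so $B$ is a proper map of $D$ onto $\mathbb{D}$ with zero set $\{p_k\}$, and $\log|B|\approx\log|f|$ on $\overline U$. Since the interior zeros $a_j$ are matched with multiplicity, $B/f$ is holomorphic and nonvanishing near $K$ with $\log|B/f|$ small there, so the standard harmonic-conjugate estimate shows $B/f$ is uniformly close, on each component of a simply connected neighborhood of $K$, to a unimodular constant; absorbing this into $c$ gives $\|B-f\|_K<\varepsilon$.

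The step I expect to be the real obstacle is making $B$ single-valued. If $D$ is simply connected, $g_D^*$ is single-valued, $B$ is automatically a finite Blaschke product in the coordinate $\psi_D$, and the above is just Carath\'eodory's theorem. For multiply connected $D$ the factors $\exp(-(g_D+ig_D^*)(\cdot,p_k))$ are individually multivalued, and $B$ is single-valued exactly when the \emph{period conditions} $\sum_k\omega_i(p_k)\in\mathbb{Z}$ hold, $\omega_i$ being the harmonic measure of the $i$-th boundary component of $D$. Meeting these finitely many integrality constraints (and, if $K$ is disconnected, matching $\arg f$ on each component) without spoiling the approximation on $K$ is the crux: one perturbs finitely many of the $p_k$ — first, if necessary, moving the zeros of $f$ outside $K$ into general position — using that $\omega_i(p)$ depends continuously on $p$, sweeps out $(0,1)$, is $\approx1$ near the $i$-th component, and contributes negligibly to $\sum_k g_D(\cdot,p_k)$ on $K$ when $p$ is near $\partial D$, while the slack $u\ge\log(1/M)>0$ leaves room to absorb a few nodes placed at moderate distance whose harmonic measures can be tuned; a continuity/degree argument then produces a configuration meeting all the period conditions. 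This delicate point is exactly the content of Lemma 4.5.4 of \cite{MR0463413} (another route is in \cite{Khavinsonref}).
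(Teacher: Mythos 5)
The paper does not prove this statement at all: it is cited as Lemma~4.5.4 of Grunsky's book \cite{MR0463413}, with the remark in the introduction that Grunsky's argument ``uses a Riemann sum to approximate an integral representation of $f$ involving the Green's function,'' and that the simply connected case is Carath\'eodory's. So there is no proof in the paper to compare against; what you have written is a plausible reconstruction in the spirit the paper alludes to --- Riesz decomposition of $-\log|f|$, Herglotz representation of the harmonic part, Riemann-sum discretization of the boundary measure into near-boundary nodes via $g_D(z,\zeta-tn_\zeta)=2\pi t\,P_D(z,\zeta)+o(t)$, and assembly of $B$ from Green's-function exponentials. Those parts are sound modulo routine checks (Dini to control the tail of the Green potential over the zeros of $f$ once $\overline U\Subset D$ is fixed with no zeros on $\partial\overline U$; uniformity of the normal-derivative asymptotics on the analytic boundary; single-valuedness of the branch of $\log$ in the harmonic-conjugate estimate near each component of $K$).

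The step that is not actually proved --- and that you yourself flag as ``the real obstacle'' --- is single-valuedness of $B$, i.e.\ forcing the period conditions $\sum_k\omega_i(p_k)\in\mathbb{Z}$, together with matching $\arg(B/f)$ across the components of $K$ when $K$ is disconnected. Your paragraph on this is a heuristic rather than an argument, and it closes by invoking ``the content of Lemma~4.5.4 of \cite{MR0463413},'' which is precisely the statement being proved; as written, the proposal is circular at the decisive point. The heuristic also elides a real tension: a node $p$ close to $\partial D$ has $g_D(\cdot,p)$ small on $K$ but $\omega_i(p)$ pinned near $0$ or $1$, whereas intermediate values of $\omega_i(p)$ require moving $p$ into the interior, where $g_D(\cdot,p)$ need not be negligible on $K$; moreover $\sum_i\omega_i\equiv 1$ couples the $m$ integrality constraints. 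Promoting ``a continuity/degree argument then produces a configuration'' to a proof --- choosing and perturbing nodes so that all $m$ period conditions and the $s-1$ argument-matching conditions are met simultaneously while the Green-potential error on $K$ stays within the slack $\log(1/M)$ --- is the nontrivial content of Grunsky's lemma and is missing here.
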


\begin{rem} When $D=\mathbb{D}$, the function $B$ in the conclusion of Theorem \ref{Grunsky_thm} is a Blaschke product (hence the notation $B$), and in this case Theorem \ref{Grunsky_thm} is due to Carath\'eodory (see, for example, in Theorem I.2.1 of \cite{MR628971} or Theorem 5.1 of the survey \cite{MR3753897}).
\end{rem}

\begin{notation}\label{I_B} For a proper map $B$ on an analytic domain $D$, we let $\mathcal{I}_B$ denote the connected components of $\partial D\setminus\{ z  : B(z)\in\mathbb{R}\}$. In other words, $\mathcal{I}_B$ are the preimages (under $B$) of the open upper and lower half-circles $\mathbb{T}\cap\mathbb{H}$, $\mathbb{T}\cap(-\mathbb{H})$. We will frequently be dealing with sequences of proper maps $(B_n)_{n=1}^\infty$ on $D$, in which case we abbreviate $\mathcal{I}_{B_n}$ by $\mathcal{I}_n$. 
\end{notation}

In order to prove Theorems \ref{main_theorem} and \ref{rational_runge}, we will need to approximate a given function by a sequence of proper approximants $(B_n)_{n=1}^\infty$ of increasing degree, so that $|B_n'|$ is uniformly comparable to $n$ on the boundary (see (\ref{first_car_conc}) below). This will be done by post-composing the approximant coming from Theorem \ref{Grunsky_thm} with the following Blaschke products:

\begin{definition}\label{Blaschke_family} For $n\in\mathbb{N}$, $0<\delta<1$, and $0<c <1$, we define the Blaschke product: 
\begin{equation}\label{blaschkeclosetoiddefn}\mathcal{B}_{n, \delta, c}(z):=z\cdot\prod_{j=0}^{n-1}\frac{e^{2\pi ij/n}z+r}{1+re^{2\pi ij/n}z}\textrm{, where } r:=1-\frac{\delta(1-c)}{n}. 
\end{equation}
\end{definition}

\begin{prop}\label{Blaschke_props2}
If $\delta>0$ is sufficiently small, then for any $0<c<1$ we have:
\begin{align}\label{blaschke_close_to_id} \sup_{n\in\mathbb{N}}  \sup_{|z|<c}\left| \mathcal{B}_{n, \delta, c}(z)-z \right| < 4\delta. \end{align} 
\end{prop}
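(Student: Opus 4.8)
The plan is to evaluate the product defining $\mathcal{B}_{n,\delta,c}$ in closed form using the factorization of $X^n-Y^n$ over the $n$-th roots of unity; once this is done, \eqref{blaschke_close_to_id} reduces to a short and elementary estimate. Write $\omega_j:=e^{2\pi i j/n}$. Using $\omega_j z+r=\omega_j(z+r\omega_j^{-1})$, the facts that $\prod_{j=0}^{n-1}\omega_j=(-1)^{n-1}$ and $\{\omega_j^{-1}\}_{j=0}^{n-1}=\{\omega_j\}_{j=0}^{n-1}$, and the identity $\prod_{j=0}^{n-1}(X-\omega_j Y)=X^n-Y^n$, one computes
\[ \prod_{j=0}^{n-1}(\omega_j z+r)=(-1)^{n-1}\prod_{j=0}^{n-1}(z+r\omega_j)=(-1)^{n-1}\big(z^n-(-r)^n\big)=(-1)^{n-1}z^n+r^n, \]
and likewise $\prod_{j=0}^{n-1}(1+r\omega_j z)=1-(-rz)^n=1-(-1)^nr^nz^n$. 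Hence, setting $w:=(-1)^nz^n$,
\[ \mathcal{B}_{n,\delta,c}(z)=z\cdot\frac{r^n-w}{1-r^nw}; \]
in other words, up to the substitution $z\mapsto(-1)^nz^n$ and multiplication by $z$, the map $\mathcal{B}_{n,\delta,c}$ is a single automorphism of $\mathbb{D}$ with real parameter $r^n\in(0,1)$.

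Next I would subtract $z$ and use the identity $\frac{r^n-w}{1-r^nw}-1=\frac{(r^n-1)(1+w)}{1-r^nw}$ to obtain
\[ \mathcal{B}_{n,\delta,c}(z)-z=z\cdot\frac{(r^n-1)(1+w)}{1-r^nw}, \]
and then estimate the three factors for $|z|<c$. Provided $\delta$ is small enough (for instance $\delta<1$ works for all $c\in(0,1)$) that $r=1-\delta(1-c)/n\in(0,1)$ for every $n\ge1$, we have $1-r^n=(1-r)\sum_{k=0}^{n-1}r^k\le n(1-r)=\delta(1-c)$; moreover $|1+w|\le 1+|z|^n<1+c^n\le2$ and $|1-r^nw|\ge 1-r^n|z|^n>1-c^n\ge1-c$. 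Multiplying these bounds gives
\[ \big|\mathcal{B}_{n,\delta,c}(z)-z\big|<c\cdot\frac{2\,\delta(1-c)}{1-c}=2c\delta<2\delta<4\delta, \]
uniformly in $n\in\mathbb{N}$ and in $|z|<c$, which is \eqref{blaschke_close_to_id}.

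I do not expect a serious obstacle: the whole argument rests on the observation that the rotated Blaschke factors in \eqref{blaschkeclosetoiddefn} telescope to a single automorphism of $\mathbb{D}$ via the factorization of $X^n-Y^n$, after which everything is routine. The only points needing a little care are the sign and parity bookkeeping — the $(-1)^{n-1}$ coming from $\prod_j\omega_j$ and the $(-1)^n$ coming from $(-r)^n$ and $(-rz)^n$ — and fixing once and for all a threshold on $\delta$, independent of both $c$ and $n$, on which $r$ stays in $(0,1)$ so that the geometric-series bound $1-r^n\le n(1-r)$ applies.
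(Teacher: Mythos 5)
Your proof is correct, and it takes a genuinely different route from the paper's. The paper estimates each Blaschke factor individually, showing $\left|\frac{-\overline{a_j}}{|a_j|}\frac{z-a_j}{1-\overline{a_j}z}-1\right|\leq 2\delta/n$ on $|z|\leq c$, then sums logarithms and exponentiates to bound the full product within $4\delta$ of $1$; this is why it needs the qualifier ``$\delta$ sufficiently small'' (to control the error in $|e^u-1|\approx|u|$). You instead collapse the entire product in closed form via $\prod_{j=0}^{n-1}(X-\omega_jY)=X^n-Y^n$, obtaining $\mathcal{B}_{n,\delta,c}(z)=z\cdot\frac{r^n-w}{1-r^nw}$ with $w=(-1)^nz^n$ — I checked the sign bookkeeping ($\prod_j\omega_j=(-1)^{n-1}$ and $(-1)^{n-1}(-1)^n=-1$) and the algebra of $\frac{r^n-w}{1-r^nw}-1=\frac{(r^n-1)(1+w)}{1-r^nw}$, and both are right. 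The subsequent estimates ($1-r^n\leq n(1-r)=\delta(1-c)$, $|1+w|<2$, $|1-r^nw|>1-c$) are valid and yield the sharper uniform bound $2c\delta<4\delta$. Your approach buys an explicit, $c$- and $n$-independent threshold ($\delta<1$ suffices, so $r\in(0,1)$ and the geometric-series bound applies) and a better constant, at the cost of exploiting the special rotational symmetry of the zeros; the paper's factor-by-factor argument is the one that would survive if the zeros $a_j$ were perturbed off the symmetric configuration.
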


\begin{prop}\label{Blaschke_props} There exists a constant $C$ depending on $\delta$ and $c$, but not on $n$, so that
 \begin{align}\label{not_too_much_oscillation}  n/C < |\mathcal{B}_{n, \delta, c}'(z)| < nC \textrm{ for all } z\in\mathbb{T}\textrm{ and } n\in\mathbb{N}. \end{align} 
\end{prop}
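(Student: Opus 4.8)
The plan is to reduce $\mathcal{B}_{n,\delta,c}$ to a closed form on which the logarithmic derivative can be computed explicitly, obtain an \emph{exact} formula for $|\mathcal{B}_{n,\delta,c}'|$ on $\mathbb{T}$, and then estimate the resulting elementary expression. Throughout $n\geq 1$, and since $0<\delta<1$, $0<c<1$ we have $\eta := \delta(1-c)\in(0,1)$ and $0<r<1$, so no smallness of $\delta$ is needed here. For the first step I would note that the $n$ points $-r\,e^{-2\pi ij/n}$, $j=0,\dots,n-1$, are exactly the roots of $w^n-(-r)^n$, and similarly $-\tfrac1r e^{-2\pi ij/n}$ are the roots of $w^n-(-1/r)^n$; comparing leading coefficients collapses the two products in $\mathcal{B}_{n,\delta,c}$ to
\[ \prod_{j=0}^{n-1}\bigl(e^{2\pi ij/n}z+r\bigr)=r^n+(-1)^{n-1}z^n,\qquad \prod_{j=0}^{n-1}\bigl(1+re^{2\pi ij/n}z\bigr)=1+(-1)^{n-1}r^nz^n. \]
Setting $\epsilon:=(-1)^{n-1}$ (so $\epsilon^2=1$), this gives the closed form $\mathcal{B}_{n,\delta,c}(z)=z\cdot\frac{r^n+\epsilon z^n}{1+\epsilon r^nz^n}$.

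Next I would compute $|\mathcal{B}_{n,\delta,c}'|$ on the unit circle. Each factor $\frac{e^{2\pi ij/n}z+r}{1+re^{2\pi ij/n}z}$, and $z$ itself, has modulus $1$ on $\mathbb{T}$ and no zero there (all zeros lie in $\{0\}\cup\{|z|=r\}$), so $|\mathcal{B}_{n,\delta,c}|\equiv 1$ on $\mathbb{T}$ and hence $|\mathcal{B}_{n,\delta,c}'(z)|=\bigl|z\,\mathcal{B}_{n,\delta,c}'(z)/\mathcal{B}_{n,\delta,c}(z)\bigr|$ there. Differentiating $\log z+\log(r^n+\epsilon z^n)-\log(1+\epsilon r^nz^n)$, multiplying by $z$, combining the two resulting fractions over the common denominator $(r^n+\epsilon z^n)(1+\epsilon r^nz^n)$ (whose numerator simplifies, using $\epsilon^2=1$, to $n\epsilon z^n(1-r^{2n})$), and finally using $z^{-n}=\overline{z^n}$ for $z\in\mathbb{T}$, I expect to reach the exact identity, valid for $z=e^{i\theta}$,
\[ z\,\frac{\mathcal{B}_{n,\delta,c}'(z)}{\mathcal{B}_{n,\delta,c}(z)}=1+\frac{n(1-r^{2n})}{1+r^{2n}+2\epsilon r^n\cos(n\theta)}, \]
where the denominator equals $|1+\epsilon r^ne^{in\theta}|^2\in[(1-r^n)^2,(1+r^n)^2]$ and is in particular positive. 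Thus the right-hand side is a positive real number, so it equals $|\mathcal{B}_{n,\delta,c}'(e^{i\theta})|$.

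Given this identity the rest is bookkeeping: bounding the denominator by $(1\pm r^n)^2$ and simplifying $\frac{1-r^{2n}}{(1\pm r^n)^2}=\frac{1\mp r^n}{1\pm r^n}$ yields $1+n\frac{1-r^n}{1+r^n}\le|\mathcal{B}_{n,\delta,c}'(e^{i\theta})|\le 1+n\frac{1+r^n}{1-r^n}$. With $r=1-\eta/n$, Bernoulli's inequality gives $r^n\ge 1-\eta$, hence $1-r^n\le\eta$; and $r^n\le e^{-\eta}$ together with $1-e^{-\eta}\ge\eta/2$ for $\eta\le 1$ gives $1-r^n\ge\eta/2$; also $1\le 1+r^n\le 2$. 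Substituting, $\frac{n\eta}{4}\le|\mathcal{B}_{n,\delta,c}'(e^{i\theta})|\le 1+\frac{4n}{\eta}\le\frac{5n}{\eta}$, so the proposition holds with $C:=5/(\delta(1-c))$, depending only on $\delta$ and $c$. The only place I expect to need genuine care is the first step together with the algebraic collapse in the second — keeping the sign $\epsilon=(-1)^{n-1}$ straight and verifying that the two fractions really combine into a single term whose denominator is the perfect modulus-square $|1+\epsilon r^ne^{in\theta}|^2$; everything afterward is routine.
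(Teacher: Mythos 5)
Your proof is correct, and it takes a genuinely different route from the paper's. The paper writes the logarithmic derivative of the product part $\mathcal{B}(z)=\prod_j \frac{-\overline{a_j}}{|a_j|}\frac{z-a_j}{1-\overline{a_j}z}$ as a sum over the $n$ factors, bounds $|\mathcal{B}'|$ from above by the Poisson kernel sum $\sum_j P(z,a_j)$, observes that this sum is maximized at a point antipodal to the zeros (with $P\simeq n$ for the nearest zero and $P\simeq n/j^2$ for the $j$th nearest), and handles the lower bound by splitting off the two nearest terms and applying the triangle inequality. You instead exploit the rotational symmetry of the zero set to collapse both products into closed form, $\mathcal{B}_{n,\delta,c}(z)=z\cdot\frac{r^n+\epsilon z^n}{1+\epsilon r^n z^n}$ with $\epsilon=(-1)^{n-1}$, compute $z\mathcal{B}'/\mathcal{B}$ exactly on $\mathbb{T}$, and observe it equals the positive real quantity $1+n(1-r^{2n})/|r^n+\epsilon z^n|^2$; the bounds then follow from $(1-r^n)^2\leq|r^n+\epsilon z^n|^2\leq(1+r^n)^2$ and Bernoulli-type estimates on $r^n$. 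I verified the key algebra: the collapse of the two products via roots of $w^n-(-r)^n$ and $w^n-(-1/r)^n$ (with leading-coefficient bookkeeping producing $\epsilon$), the simplification of the combined fraction to numerator $n\epsilon z^n(1-r^{2n})$, and the rewriting of the denominator as $|r^n+\epsilon z^n|^2$ after dividing through by $\epsilon z^n$ and using $z^{-n}=\overline{z^n}$; all check out. Your approach buys an \emph{exact} identity for $|\mathcal{B}_{n,\delta,c}'|$ on $\mathbb{T}$ and an explicit constant $C=5/(\delta(1-c))$, whereas the paper's Poisson-kernel argument is less sharp but is the kind of estimate that generalizes when the zeros are not arranged in a single symmetric orbit. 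As a minor remark, the closed form also makes Proposition \ref{Blaschke_props2} nearly immediate, since $\mathcal{B}_{n,\delta,c}(z)-z = z\cdot\frac{(r^n-1)(1-\epsilon z^n)}{1+\epsilon r^n z^n}$.
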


The proofs of Proposition \ref{Blaschke_props2} and \ref{Blaschke_props} are straightforward but tedious calculations, and so we delay them until the end of the section. For now, we show how Theorem \ref{Grunsky_thm} together with Propositions \ref{Blaschke_props2} and \ref{Blaschke_props} can be used to deduce the following result, Theorem \ref{Caratheodory_for_application}. We remark that Theorem \ref{Caratheodory_for_application} is the only result from Section \ref{blaschke_approximation2} which will be needed in the remainder of the paper.

\begin{thm}\label{Caratheodory_for_application} Let $\varepsilon>0$, $D$ an analytic domain, $K\subset D$ compact, and $f: D \rightarrow\mathbb{D}$ holomorphic. Then there exists $M<\infty$ and a sequence of proper maps $(B_n)_{n=1}^\infty$ on $D$ satisfying: 
\begin{equation}\label{uniform_convergence} \sup_{n\in\mathbb{N}}||B_n -f||_{K}<\varepsilon\textrm{, and }\end{equation}
\begin{equation}\label{first_car_conc} n/M\leq|B_n'(z)|\leq nM \textrm{ for all } z\in\partial D \textrm{ and } n\in\mathbb{N}. \end{equation}

\end{thm}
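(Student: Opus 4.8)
\textbf{Proof plan for Theorem \ref{Caratheodory_for_application}.}

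The plan is to start from the proper map provided by Theorem \ref{Grunsky_thm}, and then post-compose it (componentwise on $D$) with the Blaschke products $\mathcal{B}_{n,\delta,c}$ of Definition \ref{Blaschke_family}, whose defining properties are recorded in Propositions \ref{Blaschke_props2} and \ref{Blaschke_props}. First, fix $c<1$ so that $K$ lies well inside the set where the relevant maps take values in $\{|z|<c\}$: more precisely, since $f(K)$ is a compact subset of $\mathbb{D}$, choose $c<1$ with $f(K)\subset\{|z|<c/2\}$, say, and then fix $\delta>0$ small enough that $4\delta<\varepsilon/2$ and that Proposition \ref{Blaschke_props2} applies. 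Apply Theorem \ref{Grunsky_thm} with a tolerance $\varepsilon'<\min(\varepsilon/2, c/2)$ to obtain a proper map $B: D\to\mathbb{D}$ with $\|f-B\|_K<\varepsilon'$; in particular $B(K)\subset\{|z|<c\}$. Now set $B_n := \mathcal{B}_{n,\delta,c}\circ B$ on each component of $D$ (using that $\mathcal{B}_{n,\delta,c}:\mathbb{D}\to\mathbb{D}$ is a finite Blaschke product, hence a proper self-map of $\mathbb{D}$, so the composition of proper maps is proper).

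For the approximation estimate \eqref{uniform_convergence}: on $K$ we have $B(K)\subset\{|z|<c\}$, so Proposition \ref{Blaschke_props2} gives $\|\mathcal{B}_{n,\delta,c}\circ B - B\|_K \le \sup_{|z|<c}|\mathcal{B}_{n,\delta,c}(z)-z| < 4\delta < \varepsilon/2$, uniformly in $n$. Combining with $\|B-f\|_K<\varepsilon'<\varepsilon/2$ yields $\sup_n\|B_n-f\|_K<\varepsilon$. For the derivative estimate \eqref{first_car_conc}: by the chain rule, $B_n'(z) = \mathcal{B}_{n,\delta,c}'(B(z))\cdot B'(z)$ for $z\in\partial D$. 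Since $B$ is proper and $\partial D$ consists of analytic Jordan curves, $B$ extends analytically across $\partial D$ with $B(\partial D)\subset\mathbb{T}$ and, crucially, $B'$ does not vanish on $\partial D$ (a proper holomorphic map between analytic domains is a local diffeomorphism on the boundary, since it is an $\ell$-to-$1$ covering of $\mathbb{T}$ by each boundary component), so $B'$ is bounded above and below in modulus on the compact set $\partial D$: there is $m\ge 1$ with $1/m\le|B'(z)|\le m$ for $z\in\partial D$. Since $B(\partial D)\subset\mathbb{T}$, Proposition \ref{Blaschke_props} applies to $\mathcal{B}_{n,\delta,c}'(B(z))$, giving $n/C\le|\mathcal{B}_{n,\delta,c}'(B(z))|\le nC$ for all $z\in\partial D$ and $n\in\mathbb{N}$. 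Multiplying, $n/(Cm)\le|B_n'(z)|\le nCm$ on $\partial D$, so \eqref{first_car_conc} holds with $M:=Cm$.

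The main obstacle, and the only point requiring genuine care, is the lower bound $|B'|\ge 1/m$ on $\partial D$ — equivalently, that the proper map $B$ from Theorem \ref{Grunsky_thm} has no critical points on $\partial D$. This should be addressed by noting that $B$ restricted to each analytic boundary curve is a finite covering of $\mathbb{T}$ (by properness and the argument principle), hence has nonvanishing derivative there; alternatively, if one prefers not to rely on this, one can observe that the zeros of $B'$ in $\overline{D}$ are isolated and perturb $D$ slightly inward to an analytic domain $D_0$ with $K\subset D_0\Subset D$ whose boundary avoids $\mathrm{CP}(B)$, running the entire argument on $D_0$ in place of $D$ (this is harmless since the conclusions \eqref{uniform_convergence}–\eqref{first_car_conc} only reference $K$ and $\partial D_0$, and $B|_{D_0}$ is still proper onto a slightly smaller analytic domain after an obvious normalization, or one simply works with $B:D_0\to\mathbb{D}$ directly, which is still proper in the sense of Definition \ref{proper_defn} once restricted). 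Everything else is a routine chaining of the three cited estimates via the chain rule and the triangle inequality.
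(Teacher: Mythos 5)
Your proposal is correct and is essentially identical to the paper's proof: apply Theorem \ref{Grunsky_thm} to get a proper map $B$ with $\|f-B\|_K<\varepsilon/2$, choose $c$ with $\sup_K|B|<c<1$ and $\delta$ small, set $B_n:=\mathcal{B}_{n,\delta,c}\circ B$, and conclude via Proposition \ref{Blaschke_props2} plus the triangle inequality and Proposition \ref{Blaschke_props} plus the chain rule. The one point you flag — that $B'$ is bounded above and below in modulus on $\partial D$, i.e.\ that the proper map has no critical points on the analytic boundary — is left implicit in the paper, and your justification of it (each boundary component covers $\mathbb{T}$ finitely, or equivalently a Hopf-lemma/covering argument) is a worthwhile addition rather than a deviation.
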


\begin{proof} Fix $\varepsilon$, $D$, $K$, $f$ as in the statement of the theorem. By Theorem \ref{Grunsky_thm}, there exists a proper map $B: D \rightarrow \mathbb{D}$ satisfying $||f-B||_K<\varepsilon/2$. Fix $c$ so that $\sup_{z\in K}|B(z)|<c<1$, and fix $\delta<\varepsilon/8$ sufficiently small so that (\ref{blaschke_close_to_id}) holds. This defines the sequence $(\mathcal{B}_{n, \delta, c})_{n=1}^\infty$. We set
\begin{equation}\nonumber B_n:=\mathcal{B}_{n, \delta, c}\circ B. 
\end{equation}
The relation (\ref{uniform_convergence}) follows from (\ref{blaschke_close_to_id}) and the triangle inequality, and the relation (\ref{first_car_conc}) follows from (\ref{not_too_much_oscillation}) and the chain rule. 
\end{proof}

\noindent We conclude this section with the proofs of Propositions \ref{Blaschke_props2} and \ref{Blaschke_props}.

\vspace{2mm}

\noindent\emph{Proof of Proposition \ref{Blaschke_props2}.} Let
\begin{equation}\label{defnofB} \mathcal{B}(z):=\prod_{j=0}^{n-1}\frac{-\overline{a_j}}{|a_j|}\frac{z-a_j}{1-\overline{a_j}z}\textrm{, where } a_j:=-r\exp(-2\pi ij/n),
\end{equation}
so that $\mathcal{B}_{n, \delta, c}(z)=z\cdot\mathcal{B}(z)$. Recalling the definition of $r$ from (\ref{blaschkeclosetoiddefn}), we note that:
\begin{equation}\nonumber \sup_{|z|\leq c}\left| \frac{z+r}{1+rz} - 1\right| = \left| \frac{-c+r}{1-cr} - 1\right|=\frac{\delta(1-c)(1+c)/n}{1-c\left(1-\delta(1-c)/n\right)}\leq \frac{\delta(1+c)}{n}\leq\frac{2\delta}{n}
\end{equation}
Symmetry then gives us that:
\begin{equation}\nonumber \sup_{|z|\leq c}\left| \frac{-\overline{a_j}}{|a_j|}\frac{z-a_j}{1-\overline{a_j}z} - 1\right| \leq \frac{2\delta}{n}\textrm{ for all } 0\leq j\leq n. 
\end{equation}
Hence 
\begin{equation}\nonumber \left|\sum_{j=0}^{n-1}\log\left(  \frac{-\overline{a_j}}{|a_j|}\frac{z-a_j}{1-\overline{a_j}z} \right) \right|\leq \sum_{j=0}^{n-1}\left|\log\left(  \frac{-\overline{a_j}}{|a_j|}\frac{z-a_j}{1-\overline{a_j}z} \right) \right|\leq 3\delta
\end{equation} 
for sufficiently small $\delta$ and all $0<c<1$. Thus we conclude that 
\begin{equation}\nonumber |\mathcal{B}(z)-1|=\left| \exp\left(\sum_{j=0}^{n-1} \log\left(  \frac{-\overline{a_j}}{|a_j|}\frac{z-a_j}{1-\overline{a_j}z} \right)\right) - 1 \right| \leq 4\delta \end{equation} 
for sufficiently small $\delta$ and all $0<c<1$, and this proves (\ref{blaschke_close_to_id}) since $\mathcal{B}_{n, \delta, c}(z)=z\cdot\mathcal{B}(z)$.
\qed

\vspace{4mm}

\noindent\emph{Proof of Proposition \ref{Blaschke_props}.} We will use the notation $\lesssim$, $\simeq$, $\gtrsim$ to mean $\leq$, $=$, $\geq$ (respectively) up to a constant depending on $\delta$ and $c$, but not $n$.
Let $\mathcal{B}(z)$ be as in (\ref{defnofB}). A calculation gives:
\begin{equation}\nonumber \frac{\mathcal{B}'(z)}{\mathcal{B}(z)}=\sum_{j=0}^{n-1}\frac{1-|a_j|^2}{(1-\overline{a_j}z)^2}\cdot\left(\frac{z-a_j}{1-\overline{a_j}z}\right)^{-1}
\end{equation}
Thus we have:
\begin{equation}\label{originalBest} |\mathcal{B}'(z)| \leq \sum_{j=0}^{n-1}P(z,a_j)\textrm{, where } P(z, \zeta):=\frac{1-|\zeta|^2}{|z-\zeta|^2} \textrm{ for } z\in\mathbb{T}\textrm{, } \zeta\in\mathbb{D}
\end{equation}
 is the Poisson kernel for the unit disc.
 The sum $\sum_{j=0}^{n-1}P(z,a_j)$ takes its maximum at $z=-1$ (as well as at any of the other $n$ points $(\exp(\pi i+2\pi ij/n))_{j=0}^{n-1}$), where we have 
 \begin{equation}\label{nextBests} P(-1,r)\simeq n \textrm{ and } P(-1, a_j)\simeq n/j^2 \textrm{ for } j=1, ..., \lfloor (n-1)/2 \rfloor. 
 \end{equation}
Combining (\ref{originalBest}) and (\ref{nextBests}) together with the fact that $\sum_{j=1}^\infty1/j^2<\infty$, we have $|\mathcal{B}'(z)| \lesssim n$.
 Let us now prove $|\mathcal{B}'(z)| \gtrsim n$. The function $|\mathcal{B}'(z)|$ takes its minimum at $z_n:=\exp(\pi i(1+1/n))$, where the triangle inequality yields 
 \begin{equation}\label{RHSlabel}|\mathcal{B}'(z_n)| \geq \left|\sum_{j=0, 1}\frac{1-|a_j|^2}{(1-\overline{a_j}z_n)^2}\cdot\left(\frac{z_n-a_j}{1-\overline{a_j}z_n}\right)^{-1}\right|- \sum_{j=2}^{n-1}P(z_n,a_j) \gtrsim n. 
\end{equation}
Since $\mathcal{B}_{n, \delta, c}(z)=z\cdot\mathcal{B}(z)$ and we have shown $|\mathcal{B}'(z)|\simeq n$ for all $z\in\mathbb{T}$, The relation (\ref{not_too_much_oscillation}) follows. \qed

\section{Applying Theorem \ref{Caratheodory_for_application}}\label{blaschke_approximation}

In this brief section, we apply Theorem \ref{Caratheodory_for_application} to the setting needed for the proofs of Theorem \ref{main_theorem} and \ref{rational_runge}. Given a Jordan curve $\gamma\subset\mathbb{C}$, we denote the bounded component of $\Chat\setminus\gamma$ by $\textrm{int}(\gamma)$.

\begin{notation}\label{Blaschke_notation} We refer to Figure \ref{fig:DefnBn} for a summary of the following. For the remainder of this section, we will fix a compact set $K$, an analytic domain $D$ containing $K$, and a function $f$ holomorphic in a neighborhood of $\overline{D}$ satisfying $||f||_{D}<1$. Fix $\varepsilon>0$. We assume that \begin{equation}\label{closetoboundaryassumption} d(z, K)<\varepsilon/2 \textrm{ and } d(f(z), f(K))<\varepsilon/2 \textrm{  for every } z\in \partial D. \end{equation}

\end{notation}

\begin{figure} 
\includegraphics[scale = .35]{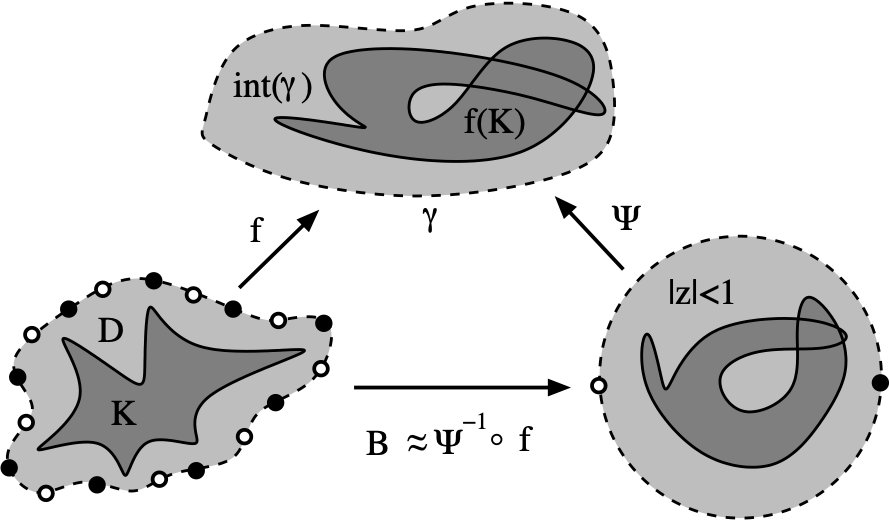}
\caption{This figure illustrates Notations \ref{I_B}, \ref{Blaschke_notation} and Theorem \ref{Caratheodory_for_application}. The vertices pictured on $\partial D$ are $B^{-1}(\pm1)$, and the components $\mathcal{I}_B$ of Notation \ref{I_B} are the edges along $\partial D$ connecting these vertices.}
 \label{fig:DefnBn}
\end{figure}


\begin{definition}\label{Psi_defn} We let $\gamma$ be an analytic Jordan curve surrounding $f(D)$ such that \begin{equation}\label{gamma_close_to_f(K)} \textrm{dist}(w, f(K))<\varepsilon \textrm{ for every } w\in\gamma, \end{equation} and let $\Psi: \mathbb{D} \rightarrow \textrm{int}(\gamma)$ denote a Riemann mapping.
\end{definition}

\noindent Recall Definition \ref{proper_defn} of a proper mapping on a domain $D$.

\begin{prop}\label{first_def_B_n} There exists $M<\infty$ and a sequence of proper mappings $(B_n)_{n=1}^\infty$ on $D$ satisfying (\ref{first_car_conc}) so that:
\begin{equation}\label{changeofcoords} ||\Psi\circ B_n-f||<\varepsilon\textrm{ for all } n\in\mathbb{N}
\end{equation}
\end{prop}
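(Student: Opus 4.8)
The plan is to combine Theorem \ref{Caratheodory_for_application} with the change of coordinates supplied by the Riemann map $\Psi$. First I would observe that $\Psi^{-1}\circ f$ is not quite a map into $\mathbb{D}$ in the desired way unless we are a little careful: the hypothesis $||f||_D<1$ together with (\ref{gamma_close_to_f(K)}) and the fact that $\gamma$ surrounds $f(D)$ guarantees $f(D)\subset\textrm{int}(\gamma)$, so $g:=\Psi^{-1}\circ f: D\to\mathbb{D}$ is a well-defined holomorphic map. (One should note $f$ is holomorphic on a neighborhood of $\overline D$ by Notation \ref{Blaschke_notation}, so $g$ is holomorphic on $D$ and extends continuously to $\overline D$.) The point of passing through $\Psi$ is that the target is now genuinely the unit disk, which is what Theorem \ref{Caratheodory_for_application} requires.

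Next I would apply Theorem \ref{Caratheodory_for_application} to $g: D\to\mathbb{D}$, the compact set $K\subset D$, and an error tolerance $\varepsilon'>0$ to be chosen. This produces $M<\infty$ and a sequence of proper maps $(B_n)_{n=1}^\infty$ on $D$ with $\sup_n \|B_n - g\|_K < \varepsilon'$ and satisfying the derivative estimate (\ref{first_car_conc}). I would then set the approximants to $f$ to be $\Psi\circ B_n$; these are well-defined since $B_n(D)\subset\mathbb{D}$ and $\Psi$ maps $\mathbb{D}$ into $\textrm{int}(\gamma)$. Because $\Psi$ is a fixed conformal map, it is in particular Lipschitz on the compact set $\overline{\mathbb{D}}$ — more precisely, $\Psi$ extends to a homeomorphism of $\overline{\mathbb{D}}$ onto $\overline{\textrm{int}(\gamma)}$ which is smooth on a neighborhood since $\gamma$ is analytic — with some Lipschitz constant $L$. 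Then on $K$,
\begin{equation}\nonumber
\|\Psi\circ B_n - f\|_K = \|\Psi\circ B_n - \Psi\circ g\|_K \leq L\,\|B_n - g\|_K < L\varepsilon'.
\end{equation}
Choosing $\varepsilon' := \varepsilon/(L+1)$ at the outset gives (\ref{changeofcoords}). The derivative condition (\ref{first_car_conc}) is inherited verbatim from Theorem \ref{Caratheodory_for_application} since it is a statement about $B_n$ alone, not about $\Psi\circ B_n$.

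The only genuine point requiring care — and the "main obstacle," though it is minor — is verifying $f(D)\subset\textrm{int}(\gamma)$ so that $g=\Psi^{-1}\circ f$ is well-defined; this follows because $\gamma$ is chosen in Definition \ref{Psi_defn} to be a Jordan curve surrounding $f(D)$. One should also confirm that the map $B_n$ is proper \emph{on $D$ as a map into $\mathbb{D}$}, which is exactly the output of Theorem \ref{Caratheodory_for_application}; composing with the conformal $\Psi$ does not affect the properness statement we want recorded, since the proposition only asserts (\ref{first_car_conc}) and (\ref{changeofcoords}) and asks that the $B_n$ (not $\Psi\circ B_n$) be proper. Thus the entire proof is: reduce to the disk via $\Psi$, invoke Theorem \ref{Caratheodory_for_application}, and undo the change of coordinates using the Lipschitz bound on $\Psi$.
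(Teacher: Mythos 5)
Your proposal is correct and follows essentially the same route as the paper: conjugate $f$ to $\Psi^{-1}\circ f:D\to\mathbb{D}$, invoke Theorem~\ref{Caratheodory_for_application}, and transfer the approximation back via a Lipschitz bound on $\Psi$. The only difference is in how that Lipschitz bound is obtained: you use that $\gamma$ is analytic so that $\Psi$ extends smoothly across $\mathbb{T}$ and is globally Lipschitz on $\overline{\mathbb{D}}$, whereas the paper instead introduces an auxiliary compact $K'$ with $K\subset\textrm{int}(K')\subset K'\subset D$, chooses $r<1$ and a small $\delta$ so that $B_n(K)$ stays inside $\{|\zeta|\le r\}$, and then uses only the interior bound $\sup_{|\zeta|\le r}|\Psi'(\zeta)|$ (which would work even without boundary regularity of $\gamma$). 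Your version is a genuine, if modest, simplification given that $\gamma$ is already required to be analytic in Definition~\ref{Psi_defn}; both are correct.
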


\begin{proof} Let $K'\subset D$ be compact so that $K\subset\textrm{int}(K')$. Fix $r<1$ and $\delta>0$ so that 
\begin{equation}\nonumber \Psi^{-1}\circ f(K')\subset\{ z : |z| < r \} \textrm{ and } \delta<\dist\left(\psi^{-1}\circ f(K), \partial (\psi^{-1}\circ f(K'))\right) 
\end{equation}
Apply Theorem \ref{Caratheodory_for_application} to: 
\begin{equation}\label{choices} \min\left(\frac{\varepsilon}{\sup_{|\zeta|\leq r}|\Psi'(\zeta)| }, \varepsilon, \delta \right), D, K', \Psi^{-1}\circ f. \end{equation} 
This produces a sequence of proper mappings $(B_n)_{n=1}^\infty$ on $D$ satisfying (\ref{first_car_conc}) for some $M<\infty$. We claim $(B_n)_{n=1}^\infty$ also satisfies (\ref{changeofcoords}). Indeed, for any $z\in K$ we have by our choice of $\delta$ that $|B_n(z)|\leq r$ and $|\Psi^{-1}\circ f(z)|\leq r$. Thus we deduce:
\begin{equation}\nonumber |\Psi\circ B_n(z)-f(z)| = |\Psi\circ B_n(z)-\Psi\circ\Psi^{-1}\circ f(z)| \leq \sup_{|\zeta|\leq r}|\Psi'(\zeta)|\cdot |B_n(z)-\Psi^{-1}\circ f(z)|,
\end{equation}
and so (\ref{changeofcoords}) follows from (\ref{choices}).
\end{proof}



Recall from the introduction that we plan to extend the definition of the approximant $\Psi\circ B_n\approx f$ from $D$ to all of $\mathbb{C}$.  To this end, it will be useful to define the following graph structure on $\partial D$.

\begin{definition}\label{vertex_disc_definition} For any $n\in\mathbb{N}$, we define a set of vertices on $\partial D$ by $\mathcal{V}_n:=(B_n|_{\partial D})^{-1}(\mathbb{R})$, where each vertex $v$ is labeled black or white according to whether $B_n(v)>0$ or $B_n(v)<0$, respectively. The curve $\partial D$ will be considered as a graph with edges defined by $\mathcal{I}_{n}$ (recall from Notation \ref{I_B} that $\mathcal{I}_{n}$ is precisely the collection of components of $\partial D \setminus\mathcal{V}_n$). We will sometimes write $D_n$ in place of $D$ when we wish to emphasize the dependence of the graph $\partial D$ on $n$. 
\end{definition}






\begin{definition}\label{quasiregular_disc_definition} We define a holomorphic mapping $g_n$ in $D$ by the formula \begin{equation}\label{g_n_disc_defn} g_n(z):= \Psi \circ B_n(z). \end{equation}
\end{definition}

In Sections \ref{folding_section}-\ref{quasiregular_approx} we will quasiregularly extend the definition of $g_n$ to $\mathbb{C}$, and then in Section \ref{main_proofs_section} we apply the MRMT to produce the rational approximant of Theorem \ref{rational_runge} as described in the introduction.

\begin{rem}\label{dependence_notation} Recall that in Notation \ref{Blaschke_notation}, we fixed $\varepsilon>0$, a compact set $K$ contained in an analytic domain $D$, and a function $f$ holomorphic in $D$ (we note $\varepsilon$, $K$, $D$, $f$ also satisfied extra conditions specified in Notation \ref{Blaschke_notation}). The objects $\gamma$, $\Psi$, $B_n$, $\mathcal{V}_n$, $g_n$ we then defined in this section were determined by our initial choice of $\varepsilon$, $K$, $D$, $f$. In future sections, it will be useful to think of  $\gamma$, $\Psi$, $B_n$, $\mathcal{V}_n$, $g_n$ as defining functions which take as input some quadruple $(\varepsilon, K, D, f)$ (for any $\varepsilon$, $K$, $D$, $f$ as in Notation \ref{Blaschke_notation}), and output whatever object we defined in this section. For instance, $\mathcal{V}_n$ defines a function which takes as input any $(\varepsilon, K, D, f)$ as in Notation \ref{Blaschke_notation} and outputs (via Definition \ref{vertex_disc_definition}) a set of vertices $\mathcal{V}_n(\varepsilon, K, D, f)$ on $\partial D$. Similarly, $B_n$ takes as input any $(\varepsilon, K, D, f)$ as in Notation \ref{Blaschke_notation} and outputs (via Proposition \ref{first_def_B_n}) a proper mapping $B_n(\varepsilon, K, D, f)$ on $D$. Likewise for  $\gamma$, $\Psi$, $g_n$.


\end{rem}

\section{Quasiconformal Folding}\label{folding_section}

Given a compact set $K\subset\mathbb{C}$ and a function $f$ holomorphic in a domain $D$ containing $K$, we showed in Section \ref{blaschke_approximation} how to approximate $f$ by a holomorphic function $g_n$ defined in $D$ (see Definition \ref{quasiregular_disc_definition}). If $f$ is a function holomorphic in an arbitrary analytic neighborhood $U$ (where $U$ need not be connected) of a compact set $K$, then one can apply the results of Section \ref{blaschke_approximation} to each component of $U$ which intersects $K$ (this is done precisely in Definition \ref{many_comps_vertex_defn}): this yields a holomorphic approximant of $f$ defined in a finite union of domains. In Sections \ref{folding_section}-\ref{extension_section4}, we will build the apparatus necessary to extend this holomorphic approximant to a quasiregular function of $\mathbb{C}$ which is holomorphic outside a small set.




It was convenient to assume in Notation \ref{Blaschke_notation} that the compact set $K$ was covered by a single domain $D$, however we now begin to work more generally:

\begin{rem}\label{notation_Section4} We refer to Figure \ref{fig:notation_Section4} for a summary of the following. Throughout Sections \ref{folding_section}-\ref{extension_section4}, we will fix $\varepsilon>0$, a compact set $K\subset \mathbb{C}$, a domain $\mathcal{D}$ containing $K$, a disjoint collection of analytic domains $(D_i)_{i=1}^k$ such that $K\subset U:=\cup_i D_i\subset \mathcal{D}$, and a function $f$ holomorphic in a neighborhood of $\overline{U}$ satisfying $||f||_{\overline{U}}<1$. We assume that the following analog of Equation (\ref{closetoboundaryassumption}) holds:
\begin{equation}\nonumber d(z, K\cap D_i) < \varepsilon/2 \text{ and  } d(f(z) , f(K\cap D_i)) < \varepsilon/2 \textrm{ for all } z \in \partial D_i \textrm{ and } 1\leq i \leq k. \end{equation}
\end{rem}

Applying the methods of the previous section to each component $D_i$ of $U$, we can define a sequence of proper mappings  $(B_n)_{n=1}^\infty$ on each $D_i$ (see Remark \ref{dependence_notation}). We will let $B_n$ denote the corresponding function defined on $U$. In particular, $(B_n)_{n=1}^\infty$ gives the following definition of vertices on the boundary of $U=\cup_i D_i$ (see Definition \ref{vertex_disc_definition} and Remark \ref{dependence_notation}).

\begin{definition}\label{many_comps_vertex_defn_init} For every $n\in\mathbb{N}$, we define a set of vertices $\mathcal{V}_n$ on $\partial U$ by \begin{equation}\nonumber \mathcal{V}_n := \bigcup_{i=1}^k \mathcal{V}_n(\varepsilon, K\cap D_i, D_i, f|_{D_i}) = \bigcup_{i=1}^k (B_n|_{\partial D_i})^{-1}(\reals). \end{equation}
\end{definition}

We now extend the graph structure on $\partial U$ by connecting the different components of $U$ by curves $\{\Gamma_i\}_{i=1}^{k-1}$ in Proposition \ref{existence_of_curves} below, and defining vertices along these curves in Definition \ref{curve_vertices_defn}. We will need to prove a certain level of regularity for these curves and vertices in order to ensure that the dilatations of quasiconformal adjustments we will make later do not degenerate as $n\rightarrow\infty$. We will denote the curves by $\{\Gamma_i\}_{i=1}^{k-1}$, and we remark that the curves depend on $n$, although we suppress this from the notation.




\begin{prop}\label{existence_of_curves} For each $n\in\mathbb{N}$, there exists a collection of disjoint, closed, analytic Jordan arcs $\{\Gamma_i\}_{i=1}^{k-1}$ in $(\Chat\setminus U)\cap \mathcal{D}$ satisfying the following properties: 
\begin{enumerate} 
\item Each endpoint of $\Gamma_i$ is a vertex in $\mathcal{V}_n$, 
\item Each $\Gamma_i$ meets $\partial U$ at right angles, 
\item $U\cup\left(\cup_i\Gamma_i\right)$ is connected, and
\item For each $1\leq i \leq k-1$, the sequence (in $n$) of curves $\Gamma_i$ has an analytic limit.
 \end{enumerate}
\end{prop}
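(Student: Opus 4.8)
The plan is to build the arcs $\{\Gamma_i\}_{i=1}^{k-1}$ one at a time, following a spanning-tree structure on the components $(D_i)_{i=1}^k$, and to obtain convergence as $n\to\infty$ by choosing each arc as a small perturbation of a single fixed (``limiting'') arc. First I would fix, once and for all, a spanning tree $T$ on the index set $\{1,\dots,k\}$, where $i\sim j$ is an allowable edge only if $D_i$ and $D_j$ can be joined by a Jordan arc inside the connected set $\mathcal D$ that avoids the other $D_\ell$'s; since $U\subset\mathcal D$ and $\mathcal D$ is connected and open, such a tree exists (connectivity of $\mathcal D$ lets one chain components together, and one can reroute around finitely many compact obstacles $\overline{D_\ell}$). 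For each edge $\{i,j\}$ of $T$ I would fix an auxiliary smooth arc $\gamma_{ij}\subset(\Chat\setminus U)\cap\mathcal D$ from a point $p_{ij}\in\partial D_i$ to a point $p_{ji}\in\partial D_j$, meeting $\partial D_i$ and $\partial D_j$ orthogonally, chosen so that the arcs for distinct edges are pairwise disjoint (possible since $T$ is a tree and $\mathcal D$ is open: thin tubular neighborhoods of disjoint arcs are disjoint, and one can shrink). Taking $\{\Gamma_i\}_{i=1}^{k-1}$ to be the $k-1$ arcs $\gamma_{ij}$ over the edges of $T$ immediately gives properties (2) and (3): orthogonality is built in, and $U\cup(\cup_i\Gamma_i)$ is connected because $T$ spans.

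The remaining two properties, (1) (endpoints are vertices of $\mathcal V_n$) and (4) (convergence in $n$), must be arranged together, and this is where the real work lies. The issue is that the vertex set $\mathcal V_n=(B_n|_{\partial U})^{-1}(\R)$ changes with $n$, and has roughly $n$ equally spaced points on each boundary circle by the derivative bound $n/M\le|B_n'|\le nM$ of \eqref{first_car_conc}; so the fixed landing point $p_{ij}$ will generally not be a vertex. To fix this, for each $n$ I would replace $p_{ij}$ by the nearest point $p_{ij}^{(n)}\in\mathcal V_n\cap\partial D_i$; by \eqref{first_car_conc} the gaps between consecutive vertices on $\partial D_i$ are $O(1/n)$, so $|p_{ij}^{(n)}-p_{ij}|=O(1/n)\to 0$. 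Then I would let $\Gamma_i^{(n)}$ be the arc obtained from the fixed $\gamma_{ij}$ by an analytic reparametrization/perturbation near its two ends: push the endpoints from $p_{ij},p_{ji}$ to $p_{ij}^{(n)},p_{ji}^{(n)}$ along $\partial D_i,\partial D_j$ and modify $\gamma_{ij}$ in $O(1/n)$-neighborhoods of the ends, keeping it analytic, keeping the orthogonal meeting (the boundary circles are analytic, so there is a well-defined analytic normal direction at each boundary point, depending analytically on the boundary point), keeping the arcs inside $(\Chat\setminus U)\cap\mathcal D$ (for large $n$ the $O(1/n)$ perturbation stays in the open set), and keeping distinct arcs disjoint (again automatic for large $n$ since the unperturbed arcs are disjoint compacta). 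Since the perturbations are supported in $O(1/n)$-neighborhoods of the endpoints and are $C^\infty$-small there, the sequence $\Gamma_i^{(n)}$ converges — in fact stabilizes away from the endpoints — to the analytic arc $\gamma_{ij}$, which is the required analytic limit in (4). For small $n$ (finitely many values) one can just take any admissible configuration, since (4) is an asymptotic statement.

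The main obstacle, and the step I would spend the most care on, is carrying out the endpoint perturbation so that the resulting arc is genuinely \emph{analytic} (not merely smooth), still meets $\partial D_i$ at exactly a right angle, and its ``analytic limit'' in the sense of (4) is unambiguous — presumably meaning the arcs converge in a space of analytic arcs, e.g. their analytic parametrizations on a fixed interval converge uniformly together with all derivatives, or extend holomorphically to a common neighborhood with uniform convergence there. The cleanest way to guarantee analyticity is to prescribe $\Gamma_i^{(n)}$ near each end by a short \emph{analytic} arc: since $\partial D_i$ is an analytic Jordan curve, it is the image of $\mathbb T$ under a map holomorphic in an annulus, so the inward normal rays are pieces of an analytic foliation; one can realize the orthogonal ``stub'' from $p_{ij}^{(n)}$ as an honest analytic arc (a normal geodesic segment, say), then splice it to the fixed $\gamma_{ij}$ in the middle region where both are defined, using a biholomorphic change of parameter — the splice can be taken analytic because one has room ($O(1)$ distance) between the $O(1/n)$ perturbation zone and the rest of $\gamma_{ij}$. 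I would also need to double-check that the finitely many arcs, after perturbation, remain pairwise disjoint and disjoint from $\overline U$ except at their designated endpoints; this is a compactness argument: the unperturbed arcs have a positive distance $d_0$ from each other and from the other $\overline{D_\ell}$'s, and the perturbations are eventually smaller than $d_0/3$.
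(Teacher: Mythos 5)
Your high-level plan (spanning tree of arcs joining components, endpoints snapped to nearby vertices, convergence via $O(1/n)$ endpoint perturbation) parallels the paper's intent, but the execution is genuinely different, and the step you flag as the ``main obstacle'' is in fact a real gap. Gluing a short analytic ``stub'' at each end to a fixed analytic arc $\gamma_{ij}$ and getting a single \emph{analytic} arc out of the splice is not a standard operation: a partition-of-unity blend is only $C^\infty$, and two distinct analytic arcs that osculate at a point need not agree on any interval (real-analytic functions have no compactly supported bumps), so there is no overlap region on which the two pieces coincide. You would instead have to perturb the \emph{whole} arc by a real-analytic deformation whose value and first jet at $t=0,1$ are prescribed (to land at $p_{ij}^{(n)}$, $p_{ji}^{(n)}$ with orthogonal tangents), and then verify disjointness, containment in $(\Chat\setminus U)\cap\mathcal D$, and convergence of the parametrizations. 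This is all doable but is a nontrivial finite-dimensional interpolation problem that your sketch does not carry out, and the right-angle condition interacts with it nontrivially.

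The paper sidesteps all of this with a single device: choose a simply connected ``gap'' region $V\subset(\Chat\setminus U)\cap\mathcal D$ whose boundary shares nontrivial arcs with $\partial D_i$ and $\partial D_j$, take a Riemann map $\phi:\mathbb D\to V$, and set $\Gamma$ equal to $\phi$ applied to the hyperbolic geodesic in $\mathbb D$ joining $\phi^{-1}(v_i)$ and $\phi^{-1}(v_j)$. This packages everything you were struggling to arrange by hand: the geodesic is a circular arc, hence analytic, so its conformal image is an analytic Jordan arc; it meets $\mathbb T$ at right angles, and since $\partial D_i$ is an analytic curve, $\phi$ extends conformally across the relevant boundary arc, so conformality gives orthogonality at $\partial U$ for free; and as $v_i,v_j$ are chosen to converge (which is possible because the vertex spacing is $O(1/n)$ by (\ref{first_car_conc})), the geodesics converge, yielding the analytic limit in (4). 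Disjointness and the spanning-tree structure are then handled recursively by picking each new $V$ disjoint from the arcs already built. I would recommend replacing your splice-based construction with this conformal-map/hyperbolic-geodesic construction — it makes properties (2) and (4) automatic rather than things to verify.
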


\begin{proof}  The set $(\Chat\setminus U)\cap \mathcal{D}$ must contain at least one simply-connected region $V$ with the property that there are distinct $i$, $j$ with both $\partial V\cap \partial D_i$ and $\partial V\cap \partial D_j$ containing non-trivial arcs (see Figure \ref{fig: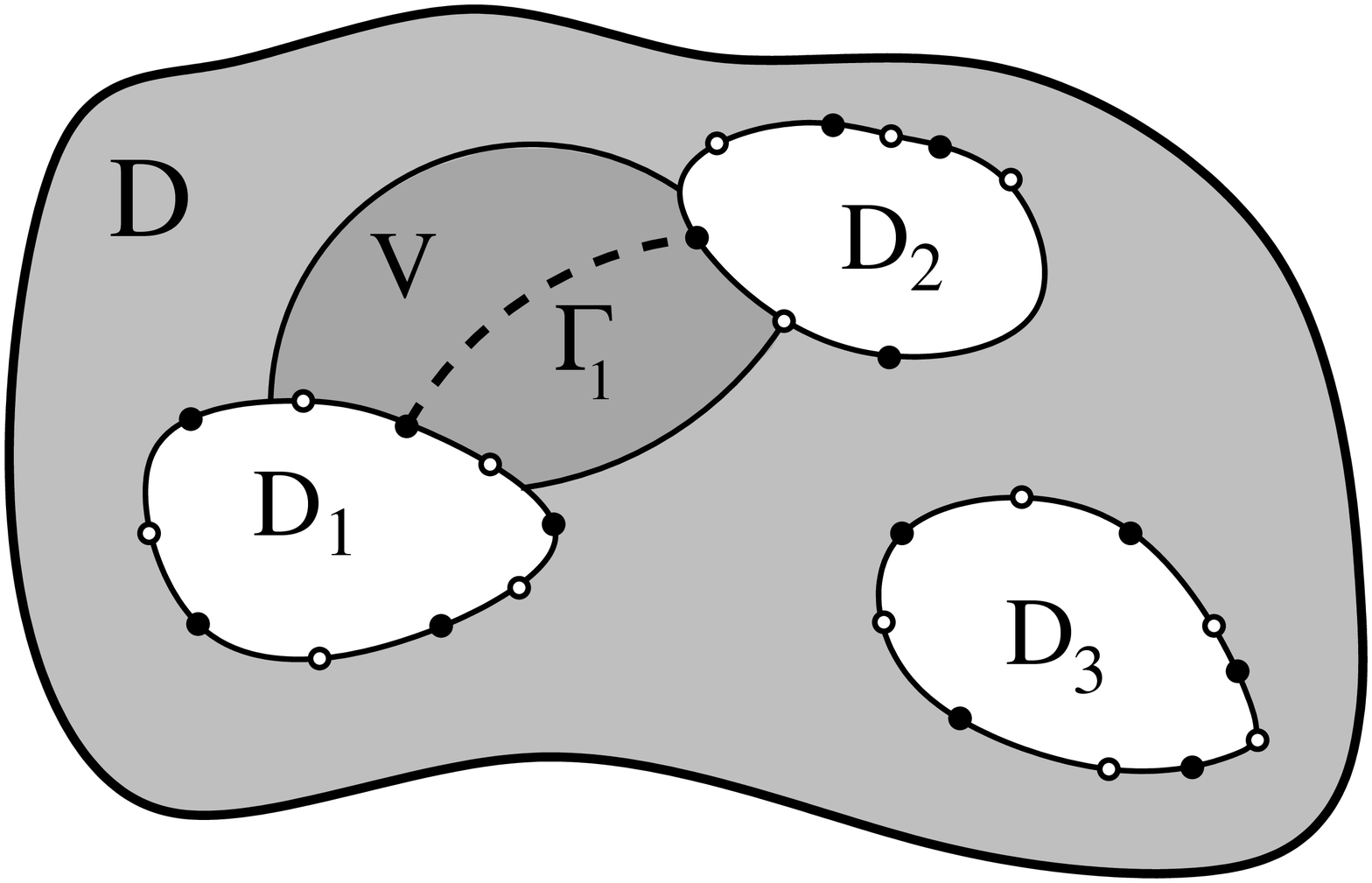}). By (\ref{first_car_conc}), for all sufficiently large $n$ both $\partial V\cap \partial D_i$, $\partial V\cap \partial D_{j}$ contain vertices of $\mathcal{V}_n$ which we denote by $v_i\in\partial D_i$, $v_{j}\in\partial D_{j}$, respectively. Consider a conformal map $\phi: \mathbb{D} \rightarrow V$, and define $\Gamma_1$ to be the image under $\phi$ of the hyperbolic geodesic connecting $\phi^{-1}(v_i)$, $\phi_i^{-1}(v_{j})$ in $\mathbb{D}$. 

\begin{figure} 
\includegraphics[scale = .25]{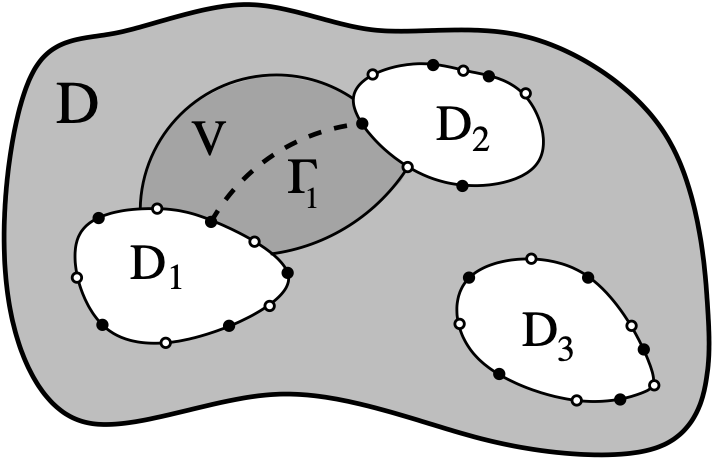}
\caption{Illustrated is the Definition of the domain $V$ in the proof of Proposition \ref{existence_of_curves}}
 \label{fig:DefnV.eps}
\end{figure}

We now proceed recursively, making sure at step $l$ we pick a $V$ which connects two components of $U$ not already connected by a $\Gamma_1$, ..., $\Gamma_{l-1}$, and so that $V$ is disjoint from $\Gamma_1$, ..., $\Gamma_{l-1}$. The curves $\Gamma_i$ satisfy conclusions (1)-(3) of the proposition. We may ensure that for each $1\leq i \leq k-1$, the sequence (in $n$) of curves $\Gamma_i$ has an analytic limit by choosing $v_i$, $v_j$ above to converge as $n\rightarrow\infty$.
\end{proof}

\begin{definition}\label{curve_vertices_defn} Consider the vertices $\mathcal{V}_n\subset \partial U$ of Definition \ref{many_comps_vertex_defn_init}. We will augment $\mathcal{V}_n$ to include vertices on the curves $(\Gamma_i)_{i=1}^{k-1}$ as follows (see Figure \ref{fig: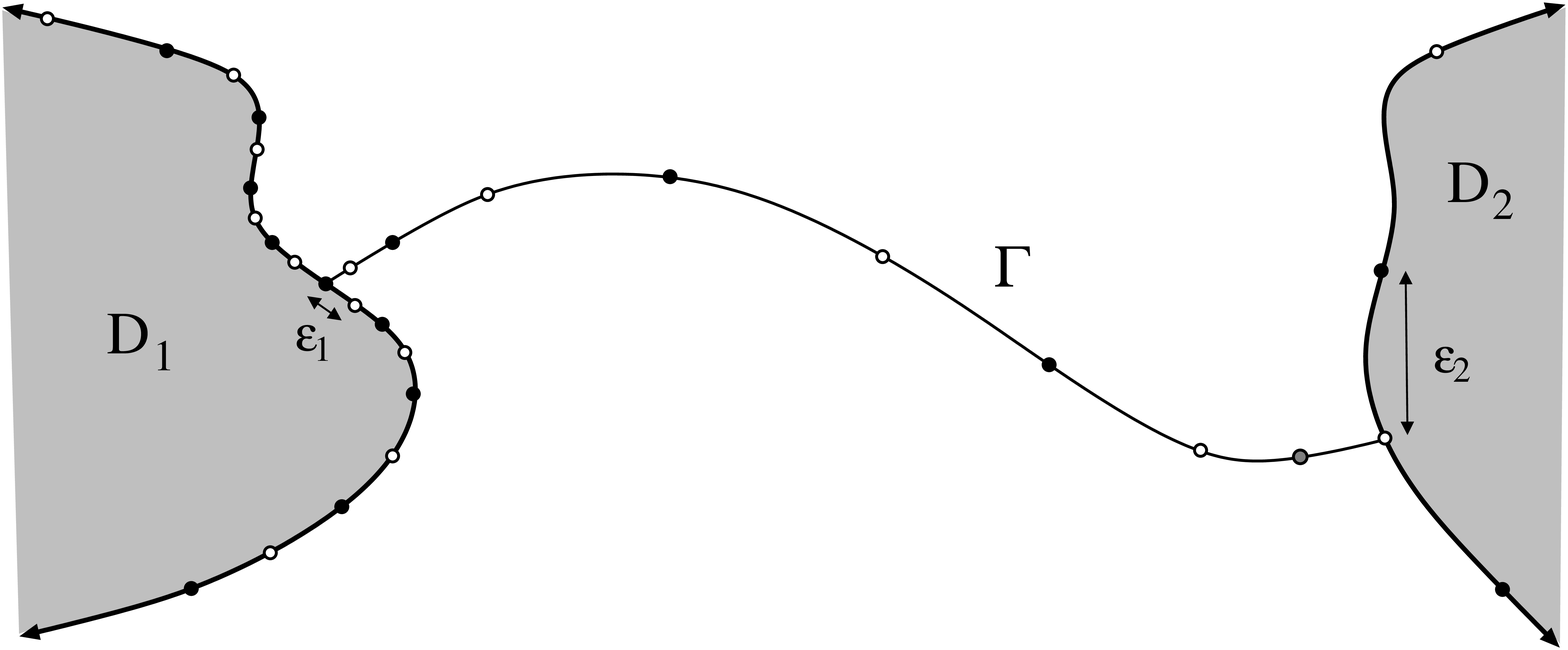}).  Let $\Gamma\in(\Gamma_i)_{i=1}^{k-1}$ denote both the curve as a subset of $\mathbb{C}$ and the arclength parameterization of the curve, and suppose $\Gamma$ connects vertices \[\Gamma(0)=v_i\in \partial D_i, \hspace{2.5mm} \Gamma(\textrm{length}(\Gamma))=v_{j}\in\partial D_{j}.\] For $k=i$, $j$, let $\varepsilon_k$ denote the minimum length of the two edges with endpoint $v_k$ in $\partial D_k$, and suppose without loss of generality $\varepsilon_{j}<\varepsilon_{i}$. Let $l$ be so that \[\varepsilon_{j}/2\leq\varepsilon_i/2^l\leq2\varepsilon_{j}.\] We place vertices at $\Gamma(\varepsilon_i/2), ..., \Gamma(\varepsilon_i/2^l)$, and we place vertices along $\Gamma([\varepsilon_i/2^l, \textrm{length}(\Gamma)])$ at equidistributed points. We can label the vertices black/white along $\Gamma$ so that vertices connect only to vertices of the opposite color by adding one extra vertex at the midpoint of the segment having $v_{j}$ as an endpoint, if need be.%
\end{definition}

\begin{figure} 
\includegraphics[scale = .2]{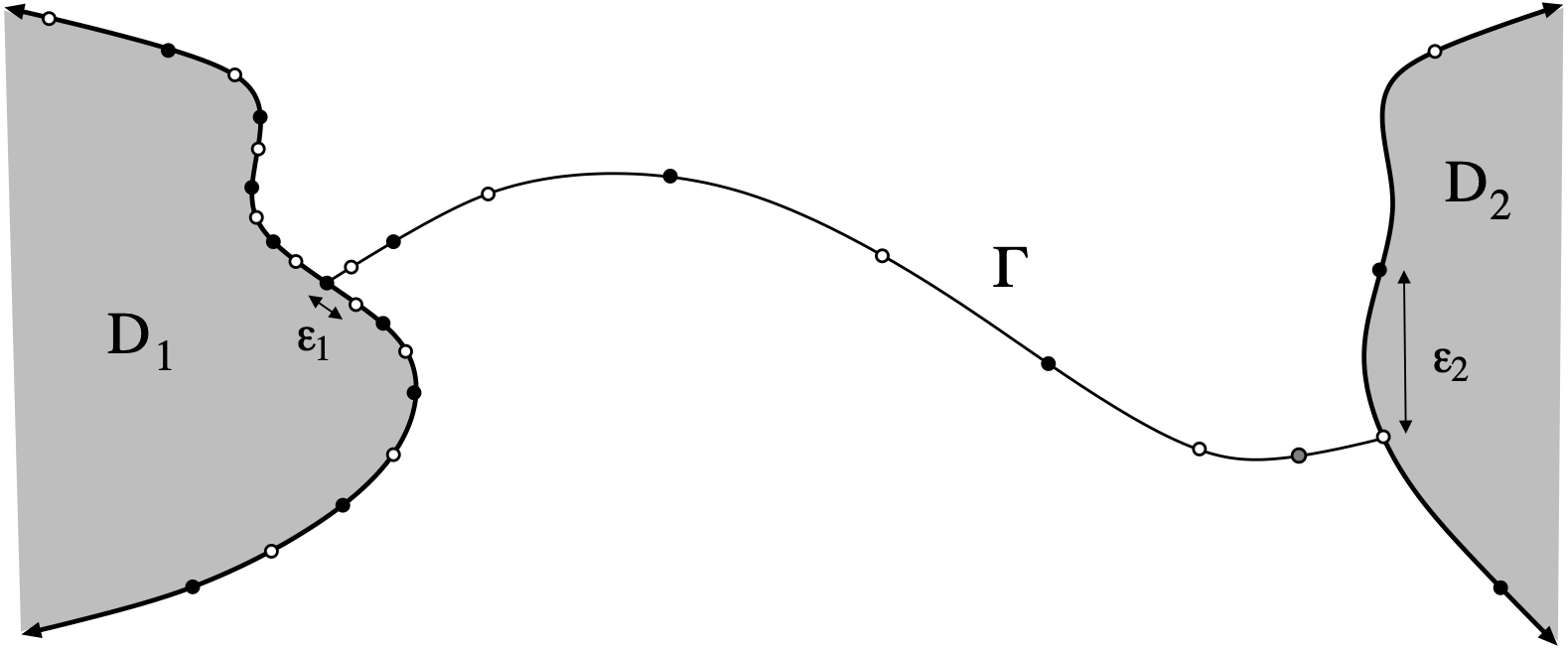}
\caption{Illustrated is Definition \ref{curve_vertices_defn}.}
 \label{fig:GammaVerts.eps}
\end{figure} 


\noindent We introduce the following notation.

\begin{notation}\label{omega_def_setup} Throughout Sections \ref{folding_section}-\ref{extension_section4}, we will let $\Omega$ denote a fixed (arbitrary) component of \begin{equation}\label{complcomp} \Chat\setminus\left( \overline{U} \cup \bigcup_{i=1}^{k-1}\Gamma_i \right), \end{equation} and $p\in\Omega$. Note that $\Omega$ is simply connected by Proposition \ref{existence_of_curves}(3). Denote $\mathbb{D}^*:=\Chat\setminus\overline{\mathbb{D}}$, and let $\sigma$ denote any conformal mapping \begin{equation}\label{covering_map_notation} \sigma: \mathbb{D}^* \rightarrow \Omega \end{equation} satisfying $\sigma(\infty)=p$. For $z\in\Omega$, we define $\tau(z):=\sigma^{-1}(z)$. The map $\tau$ induces a partition of $\mathbb{T}$ which we denote by $V_n:=\tau(\mathcal{V}_n)$.
\end{notation}

\begin{figure} 
\includegraphics[scale = .2]{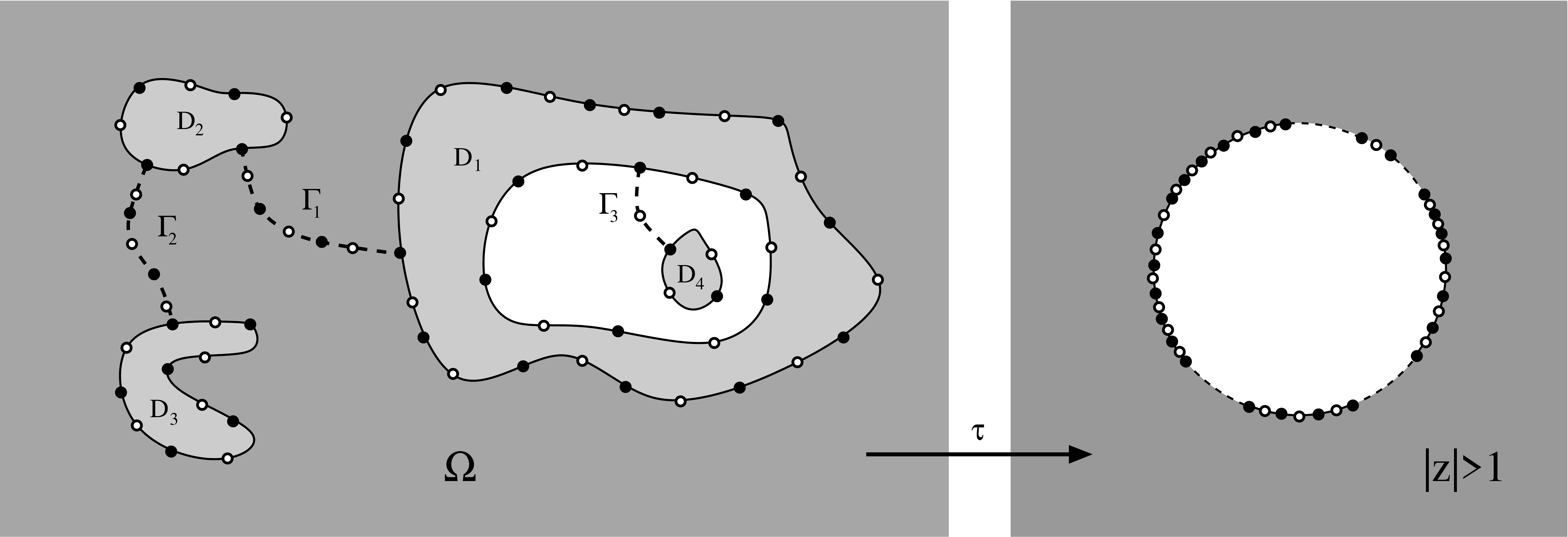}
\caption{This figure illustrates Remark \ref{notation_Section4} and Notation \ref{omega_def_setup}. As pictured, $U$ has four components $(D_i)_{i=1}^4$ which are connected by curves $(\Gamma_i)_{i=1}^3$. Recall $K\subset U$ (the compact set $K$ is not shown in the figure). The unbounded component $\Omega$ of (\ref{complcomp}) is pictured in dark grey. The map $\tau: \Omega \rightarrow \mathbb{D}^*$ is a conformal mapping, and sends the vertices on $\partial\Omega$ to (possibly unevenly spaced) vertices on the unit circle. }
 \label{fig:notation_Section4}
\end{figure}


\begin{rem} We will sometimes write $\Omega_n$, $\mathbb{D}_n^*$ in place of $\Omega$, $\mathbb{D}^*$, respectively, when we wish to emphasize the dependence of the vertices $\mathcal{V}_n\subset\partial\Omega$, $V_n\subset\partial\mathbb{D}^*$ on the parameter $n$.
\end{rem}

\begin{prop}\label{max_diam_omega} For the graph $\partial \Omega_n$, we have: \begin{equation}\nonumber \max\{\emph{diam}(e) : e \emph{ is an edge of } \partial \Omega_n \} \xrightarrow{n\rightarrow\infty}0. \end{equation} 
\end{prop}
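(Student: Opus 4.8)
The plan is to show the edges of $\partial\Omega_n$ shrink uniformly by splitting them into two types: the edges lying on $\partial U$ (coming from $\mathcal{V}_n\cap\partial U$, i.e. preimages of $\mathbb{R}$ under the proper maps $B_n$), and the edges lying on the curves $\Gamma_i$. For the first type, the key input is the derivative bound $(\ref{first_car_conc})$: on each component $D_i$ one has $n/M \le |B_n'(z)| \le nM$ on $\partial D_i$. Since $B_n$ maps $\partial D_i$ to $\mathbb{T}$ and the vertices $\mathcal{V}_n\cap\partial D_i$ are exactly $(B_n|_{\partial D_i})^{-1}(\mathbb{R})$, consecutive vertices have images on $\mathbb{T}$ that are a bounded angular distance apart (at most $\pi$, but in fact the arc of $\mathbb{T}$ between consecutive points of $(B_n|_{\partial D_i})^{-1}(\pm 1)$ is a half-circle, and $\pm i$ subdivides these). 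Actually what matters is that between consecutive points of $\mathcal{V}_n$ the image arc on $\mathbb{T}$ has length $\pi/2$, hence $B_n$ restricted to such an edge covers an arc of fixed length $\pi/2$; combined with the lower bound $|B_n'|\ge n/M$ and the fact that $\partial D_i$ is a fixed analytic curve (so arclength on $\partial D_i$ is comparable to Euclidean distance), each such edge has diameter $\lesssim 1/n \to 0$. One should be slightly careful that $|B_n'|\ge n/M$ gives a lower bound on how fast $B_n$ moves, hence an upper bound on the length of a preimage arc of fixed image-length; this is the routine estimate.

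For the second type of edge — those on the curves $\Gamma_i$ — I would invoke Definition~\ref{curve_vertices_defn} directly. The vertices on $\Gamma$ near the endpoint $v_i\in\partial D_i$ are placed at arclength positions $\varepsilon_i/2, \varepsilon_i/4, \dots, \varepsilon_i/2^l$ where $\varepsilon_i$ is the minimum length of the two $\partial D_i$-edges at $v_i$; by the first-type analysis $\varepsilon_i \lesssim 1/n$, so every one of these geometric-scale edges on $\Gamma$ has length at most $\varepsilon_i/2 \lesssim 1/n$. The remaining portion $\Gamma([\varepsilon_i/2^l,\,\mathrm{length}(\Gamma)])$ is subdivided into equidistributed points; the number of such points must be chosen so that the resulting edges also have length $\lesssim 1/n$ — this is consistent with the construction since $\varepsilon_i/2^l \asymp \varepsilon_j \lesssim 1/n$ and one simply uses $\asymp n$ equidistributed points on the fixed-length curve $\Gamma$ (which converges to an analytic limit by Proposition~\ref{existence_of_curves}(4), so its length is bounded uniformly in $n$). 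Finally, the possible extra midpoint vertex only shortens edges. Taking the maximum over the finitely many components $D_i$ and curves $\Gamma_i$, and noting $\partial\Omega_n$ is built from pieces of these, gives $\max\{\mathrm{diam}(e)\} \lesssim 1/n \to 0$.

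The main obstacle is the bookkeeping in passing from the derivative bound on $B_n$ to a diameter bound on edges of $\partial U$, and more subtly confirming that the "equidistributed points" on the far part of each $\Gamma_i$ in Definition~\ref{curve_vertices_defn} are indeed taken in number $\asymp n$ so that those edges also shrink — this is implicit in the construction but deserves an explicit sentence. A secondary point is that one works with $\partial\Omega_n$, not all of $\partial U \cup \bigcup\Gamma_i$; but every edge of $\partial\Omega_n$ is an edge of $\partial U$ or of some $\Gamma_i$ (or a sub-edge thereof, if the midpoint vertex was added), so the uniform bound transfers immediately. I would organize the write-up as: (1) recall $(\ref{first_car_conc})$ and the analyticity of $\partial D_i$; (2) estimate edges on $\partial U$; (3) estimate edges on $\Gamma_i$ via Definition~\ref{curve_vertices_defn} and Proposition~\ref{existence_of_curves}(4); (4) conclude by taking the finite maximum.
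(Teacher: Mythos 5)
Your proposal is correct and expands the paper's one-line proof, which cites exactly the two ingredients you use: the derivative bound (\ref{first_car_conc}) for the edges on $\partial U$ and Definition \ref{curve_vertices_defn} (together with Proposition \ref{existence_of_curves}(4) for the length bound on $\Gamma_i$) for the edges on the connecting curves. One small slip worth fixing: $\mathcal{V}_n = (B_n|_{\partial D_i})^{-1}(\{\pm1\})$, so each edge of $\partial D_i$ maps under $B_n$ onto a half-circle of length $\pi$ rather than $\pi/2$ (the points $\pm i$ are not vertices), though this only changes the constant and not the $O(1/n)$ conclusion.
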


\begin{proof} This follows from (\ref{first_car_conc}) and Definition \ref{curve_vertices_defn}.
\end{proof}

As explained in the introduction, in order to prove uniform approximation in Theorem \ref{rational_runge}, we will need to prove that our quasiregular extension is holomorphic outside a region of small area. This will usually mean proving the following condition holds.

\begin{definition}\label{vertsuppdefn} Suppose $V\subset\mathbb{C}$ is an analytic domain, and $\partial V$ is a graph. Let $C>0$. We say a quasiregular mapping $\phi: V \rightarrow \phi(V)$ is $C$-\emph{vertex-supported} if \begin{equation}\label{defvertsupp} \textrm{supp}(\phi_{\overline{z}}) \subset \bigcup_{e\in\partial V} \{ z : \dist(z,e) < C\cdot\textrm{diam}(e) \} \end{equation} 
(see Figure \ref{fig:C_nbhd}), where the union in (\ref{defvertsupp}) is taken over all edges $e$ on $\partial V$.
\end{definition}

\begin{figure} 
\includegraphics[trim = 100 0 100 0, clip, height= 1.9in]{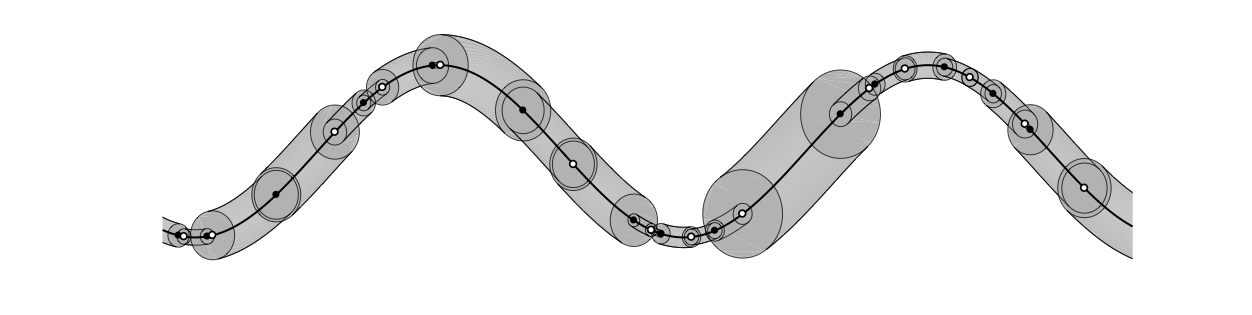}
\caption{Shown as a black curve is part of a graph $G$, and in light gray the neighborhood $\cup_{e\in G} \{ z : \dist(z,e) < C\cdot\textrm{diam}(e) \}$ of $G$. }
 \label{fig:C_nbhd}
\end{figure}

\noindent It will also be useful to have the following definition.



\begin{definition}\label{length_mult_defn} Suppose $e$, $f$ are rectifiable Jordan arcs, and $h: e \rightarrow f$ is a homeomorphism. We say that $h$ is \emph{length-multiplying} on $e$ if  the push-forward (under $h$) of arc-length measure on $e$ coincides with the arc-length measure on $f$ multiplied by $\textrm{length}(f)/\textrm{length}(e)$.
\end{definition}

First we will adjust the conformal map $\tau$ so as to be length-multiplying along edges of $\partial\Omega$. Recall the vertices $V_n\subset\mathbb{T}$ defined in Notation \ref{omega_def_setup}.

\begin{prop}\label{first_folding_adjustment} For every $n$, there is a $K$-quasiconformal mapping $\lambda: \mathbb{D}_n^* \rightarrow \mathbb{D}_n^*$ so that: \begin{enumerate} \item $\lambda$ is $C$-vertex-supported for some $C>0$, \item $\lambda(z)=z$ on $V_n$ and off of $\textrm{supp}(\lambda_{\overline{z}})$, \item $\lambda\circ\tau$ is length-multiplying on every component of $\partial\Omega\setminus \mathcal{V}_n$, \item $C$, $K$ do not depend on $n$. \end{enumerate} 
\end{prop}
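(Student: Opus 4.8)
The plan is to build $\lambda$ as a composition of local adjustments, one near each edge of $\partial\Omega$, glued together using the vertex-supported structure. First I would fix $n$ and recall that $\partial\Omega$ is an analytic Jordan curve carrying the finite vertex set $\mathcal{V}_n$, so $\tau = \sigma^{-1}$ maps $\partial\Omega$ diffeomorphically onto $\mathbb{T}$, sending $\mathcal{V}_n$ to $V_n$ and each edge $e\subset\partial\Omega\setminus\mathcal{V}_n$ onto an arc $\tau(e)\subset\mathbb{T}\setminus V_n$. On a single such arc, $\tau$ already carries arc-length on $e$ to some positive, real-analytic density on $\tau(e)$; what we want is to post-compose by a map $\mathbb{T}\to\mathbb{T}$ that is the identity at the two endpoints of $\tau(e)$ and whose derivative corrects this density to the constant $\mathrm{length}(\tau(e))/\mathrm{length}(e)$ — i.e. the (unique) circle diffeomorphism making $\lambda\circ\tau|_e$ length-multiplying in the sense of Definition \ref{length_mult_defn}. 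This boundary diffeomorphism of $\mathbb{T}$ is then extended into $\mathbb{D}_n^*$ as a quasiconformal map supported in a collar neighborhood of $\tau(e)$.

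The key step is to do this extension \emph{with dilatation and support-width controlled independently of $n$}. Here I would use the edges $\tau(e)$ directly: the relevant scale is $\mathrm{diam}(\tau(e))$ (equivalently its length), and the boundary map on $\tau(e)$, after rescaling the arc to unit length, is a diffeomorphism of $[0,1]$ fixing the endpoints whose derivative is the ratio of two analytic densities. The point is that these densities are uniformly bounded above and below and have uniformly bounded derivatives — this is exactly what Proposition \ref{existence_of_curves}(4) and the uniform estimate (\ref{first_car_conc}) buy us: the curves $\Gamma_i$ converge analytically, the map $\sigma$ (hence $\tau$) converges, and the comparison of adjacent edge-lengths is bounded by Definition \ref{curve_vertices_defn} (adjacent edges differ in length by a bounded factor, and likewise for the $B_n$-edges by (\ref{first_car_conc})). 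Consequently the rescaled boundary maps lie in a compact family of $C^1$-diffeomorphisms of $[0,1]$ bounded away from the degenerate ones, so each admits a quasiconformal extension to a half-disk with dilatation bounded by a universal constant, supported within a fixed fraction of the scale — e.g. by the standard Beurling–Ahlfors or Ahlfors–Beurling extension applied in a conformal chart straightening $\tau(e)$, or simply by interpolating linearly in the normal direction inside a collar of width $\sim\mathrm{diam}(\tau(e))$. Choosing these collars thin enough that collars of non-adjacent edges are disjoint, and handling the (bounded number of) overlaps near a common vertex $v\in V_n$ by noting that both adjacent boundary maps fix $v$ and can be taken to agree to first order there, the composite $\lambda$ is globally quasiconformal with a single dilatation bound, equals the identity on $V_n$ and outside the union of collars, and is $C$-vertex-supported with $C$ depending only on the chosen collar proportion.

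Finally I would record that this $\lambda$ maps $\mathbb{D}_n^*$ to itself: it fixes $\mathbb{T}$ setwise (each boundary adjustment is a self-map of $\mathbb{T}$) and is orientation-preserving, so it restricts to a homeomorphism of the exterior disk, and it fixes $\infty$ up to harmless normalization. Conclusions (1)--(4) are then immediate from the construction: (1) and (4) from the uniform collar width and dilatation bound, (2) from the endpoint-fixing and the choice of support, and (3) from the defining property of the boundary adjustment on each edge.

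The main obstacle is the uniform-in-$n$ control in (4): a priori the edges $\tau(e)$ could shrink and cluster as $n\to\infty$ (indeed Proposition \ref{max_diam_omega} says their diameters go to $0$), so one must be careful that the boundary diffeomorphisms, \emph{after rescaling to unit size}, do not themselves degenerate. This is where the analytic convergence of the $\Gamma_i$ and of $\sigma$, together with the comparability of adjacent edge lengths from (\ref{first_car_conc}) and Definition \ref{curve_vertices_defn}, is essential — without it the densities being corrected could develop unbounded ratios and force the dilatation to blow up.
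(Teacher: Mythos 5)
The paper's proof is short and structurally different from yours: it lifts to the right half-plane via the covering map $\phi=\sigma\circ\exp$, checks that the lifted partition $\phi^{-1}(\mathcal V_n)$ of $\partial\mathbb H_r$ has bounded geometry with constants independent of $n$ (citing Proposition \ref{existence_of_curves}(2) and Definition \ref{curve_vertices_defn}), and then invokes Theorem 4.3 of \cite{Bis15} as a black box to produce a periodic, vertex-supported, uniformly $K$-quasiconformal $\beta$ making $\phi\circ\beta$ length-multiplying; finally it conjugates $\beta$ back to $\mathbb D^*$ by $\exp$ and $\log$. Your proposal re-derives by hand, directly in $\mathbb D^*$, essentially the content of that black-box lemma: build the boundary correction arc by arc, extend into collars of scale $\mathrm{diam}(\tau(e))$, and glue. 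Both arguments rest on the same source of uniformity (bounded geometry of the partition), so the route is morally the same; the paper's detour through the half-plane lift is cleaner because in $\mathbb H_r$ the partition is roughly evenly spaced, and the extension with fixed-width support, plus all prime-end subtleties, are absorbed into the cited theorem.

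As written, though, your gluing step has a genuine gap. You claim the adjacent boundary maps at a shared vertex $v$ ``can be taken to agree to first order.'' They cannot: the requirement that $\lambda\circ\tau$ be length-multiplying on each edge forces the one-sided derivatives of $\lambda|_{\mathbb T}$ at $v$ to equal $\mathrm{length}(\tau(e_i))\big/\bigl(\mathrm{length}(e_i)\,|\tau'(v)|\bigr)$ for the two adjacent edges $e_1,e_2$, and these values generically differ, so there is no freedom to match first-order data. What rescues the construction is that first-order agreement is not needed: take over each arc $\tau(e)$ a radial box $\{z\in\mathbb D^*: z/|z|\in\tau(e),\ 1\le |z|\le 1+c\,\mathrm{diam}(\tau(e))\}$ and extend by radial interpolation (or Beurling--Ahlfors in a straightening chart). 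Because the boundary map fixes the endpoints of $\tau(e)$, that extension is automatically the identity on the two radial sides of the box; adjacent boxes then meet only along such radial segments, where both extensions are the identity, so the glued map is continuous and hence quasiconformal by removability of analytic arcs, with dilatation bounded exactly as you argue via bounded geometry. Two further slips worth flagging: $\partial\Omega$ is not a Jordan curve and $\tau$ is not a diffeomorphism of $\partial\Omega$ onto $\mathbb T$ (the $\Gamma_i$ are slits, so $\tau$ acts on prime ends and is $2$-to-$1$ over them), and the bounded-geometry check at the junction vertices genuinely uses the right-angle condition Proposition \ref{existence_of_curves}(2) together with the dyadic vertex placement of Definition \ref{curve_vertices_defn}, not only the analytic convergence from Proposition \ref{existence_of_curves}(4) that you cite.
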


\begin{proof} This is a consequence of Theorem 4.3 of \cite{Bis15}. Indeed, recall $\tau:=\sigma^{-1}$ and consider the $2\pi i$-periodic covering map
\begin{equation}\label{covering_map}\phi:=\sigma\circ\exp: \mathbb{H}_r\mapsto \Omega. \end{equation}
The map $\phi$ induces a periodic partition $\phi^{-1}(\mathcal{V}_n)$ of $\partial\mathbb{H}_r$ which has \emph{bounded geometry} (see the introduction of \cite{Bis15}, or Section 2 of \cite{MR4023391}) with constants independent of $n$ by Proposition \ref{existence_of_curves}(2) and Definition \ref{curve_vertices_defn}. Thus Theorem 4.3 of  \cite{Bis15} applies to produce a $2\pi i$-periodic, $C$ vertex-supported, and $K$-quasiconformal map $\beta: \mathbb{H}_r\rightarrow\mathbb{H}_r$ so that $\phi\circ\beta$ is length-multiplying on edges of $\mathbb{H}_r$, and $C$, $K$ are independent of $n$. Thus, the inverse
\begin{equation}\nonumber \beta^{-1}\circ\log\circ\tau \end{equation}
is length-multiplying, and since $\exp$ is length-multiplying on vertical edges, the well-defined map
\begin{equation}\nonumber \lambda:=\exp\circ\beta^{-1}\circ\log: \mathbb{D}^* \rightarrow \mathbb{D}^* \end{equation}
satisfies the conclusions of the Proposition.

\end{proof}


The main idea in defining the quasiregular extension in $\Omega$ is to send each edge of $\partial D_i$ to the upper or lower half of the unit circle by following $\lambda\circ\tau$ with a power map $z\mapsto z^n$ of appropriate degree. The main difficulty in this approach, however, is that the images of different edges of $\partial D_i$ under $\lambda\circ\tau$ may differ significantly in size, so that there is no single $n$ with $z\mapsto z^n$ achieving the desired behavior. The solution is to modify the domain $\Omega$ by removing certain ``decorations'' from the domain $\Omega$, so that each edge of $\partial D_i$ is sent to an arc of roughly the same size under $\lambda\circ\tau$. This is formalized below in Theorem \ref{complicated_folding_adjustment} (see also Figures \ref{fig:folding_picture}, \ref{fig:DefnPsi}), and is an application of the main technical result of \cite{Bis15} (see Lemma 5.1). The ``decorations'' are the trees in the following definition.

\begin{definition}\label{treedomdefn} Let $V\subset \mathbb{T}$ be a discrete set. We call a domain $W\subset\mathbb{D}^*$ a \emph{tree domain rooted at $V$} if $W$ consists of the complement in $\mathbb{D}^*$ of a collection of disjoint trees, one rooted at each vertex of $V$ (see the center of Figure \ref{fig:folding_picture}). 
\end{definition}

\begin{notation} For $m\in\mathbb{N}$, we let \begin{equation} \nonumber \mathcal{Z}_m^{\pm}:=\{ z \in \mathbb{T} : z^m=\pm1 \}, \end{equation} \begin{equation} \nonumber \mathcal{Z}_m:=\mathcal{Z}_m^+\cup\mathcal{Z}_m^-. \end{equation} In other words, $\mathcal{Z}_m^+$ denotes the $m^{\textrm{th}}$ roots of unity, and $\mathcal{Z}_m^-$ the $m^{\textrm{th}}$ roots of $-1$.
\end{notation}


\begin{figure} 
\includegraphics[scale = .17]{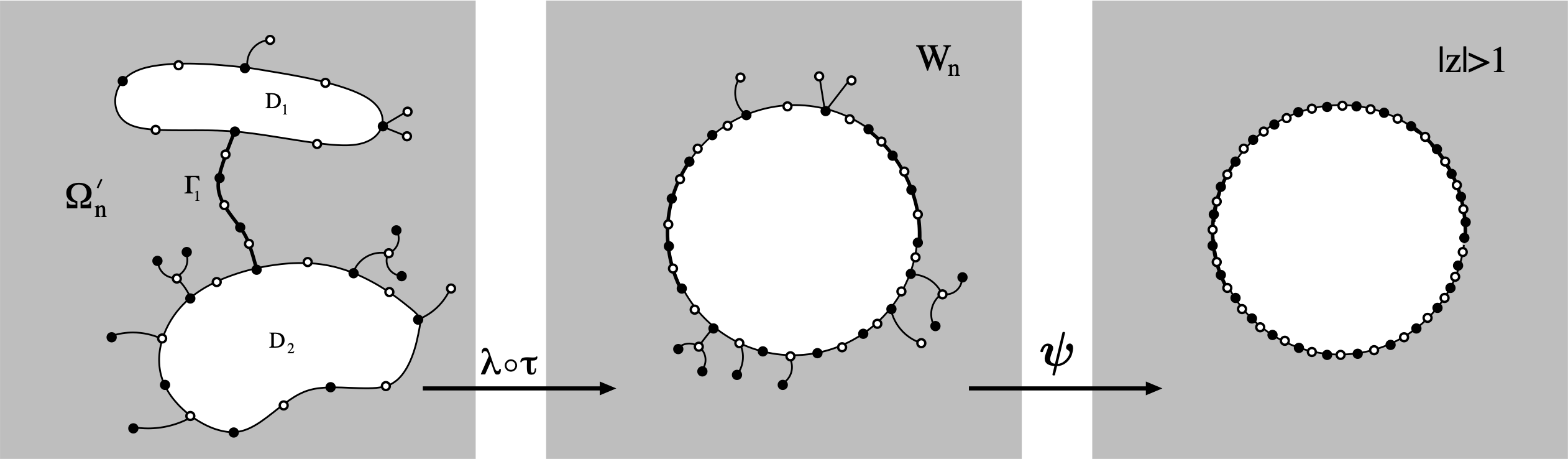}
\caption{This figure illustrates the Folding Theorem \ref{complicated_folding_adjustment} and Notation \ref{modified_domain}. The simply connected domain $\Omega_n'$ is obtained by removing from $\Omega$ certain trees based at the vertices along $\partial\Omega$.  }
 \label{fig:folding_picture}
\end{figure}

\begin{thm}\label{complicated_folding_adjustment} For every $n$, there exists a tree domain $W_n$ rooted at $V_n$, an integer $m=m(n)$, and a $K$-quasiconformal mapping $\psi: W_n \rightarrow \mathbb{D}^*$ so that: \begin{enumerate} \item $\psi$ is $C$-vertex-supported for some $C>0$, and $\psi(z)=z$ off of $\textrm{supp}(\psi_{\overline{z}})$, \item on any edge $e$ of $\partial W_n\cap \mathbb{T}$, $\psi$ is length-multiplying and $\psi(e)$ is an edge in $\mathbb{T}\setminus\mathcal{Z}_m$,  \item for any edge $e$ of $\partial W_n\cap\mathbb{D}^*$, $\psi(e)$ consists of two edges in $\mathbb{T}\setminus\mathcal{Z}_m$. Moreover, if $x\in e$, the two limits $\lim_{W_n\ni z\rightarrow x}\psi(z)\in \mathbb{T}$ are equidistant from $\mathcal{Z}_m^+$, and from $\mathcal{Z}_m^-$, and \item $C$, $K$ do not depend on $n$. \end{enumerate}
\end{thm}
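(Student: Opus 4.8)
\textbf{Proof proposal for Theorem \ref{complicated_folding_adjustment}.}

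The plan is to reduce the statement to an application of the main folding lemma (Lemma 5.1) of \cite{Bis15}, following the same template used in the proof of Proposition \ref{first_folding_adjustment}. First I would pass to the logarithmic coordinate as before: let $\phi := \sigma \circ \exp : \mathbb{H}_r \to \Omega$ be the $2\pi i$-periodic covering map of (\ref{covering_map}), so that $\phi^{-1}(\mathcal{V}_n)$ is a $2\pi i$-periodic partition of $\partial \mathbb{H}_r$. By Proposition \ref{existence_of_curves}(2) and Definition \ref{curve_vertices_defn}, this partition has bounded geometry with constants independent of $n$; that is the crucial uniformity input. The folding lemma of \cite{Bis15} takes a partition of $\partial\mathbb{H}_r$ of bounded geometry together with a labeling of its edges and produces a tree domain (a half-plane with disjoint trees removed, one rooted at each vertex), an integer $m$, and a quasiconformal map onto a ``standard'' half-plane whose partition has edges of uniformly comparable size, with the folding map length-multiplying on edges and with the two sides of each tree-edge mapped symmetrically about the lattice points $\{x : e^{mx} = \pm 1\}$, and with quasiconformal and vertex-support constants depending only on the bounded geometry constants.

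Next I would transport everything back through $\exp$. Since $\exp$ is length-multiplying on vertical edges and conjugates the lattice $\{x : e^{mx} = \pm 1\}$ in $\partial\mathbb{H}_r$ to $\mathcal{Z}_m \subset \mathbb{T}$, the half-plane tree domain produced by \cite{Bis15} descends to a tree domain $W_n \subset \mathbb{D}^*$ rooted at the vertex set $V_n = \tau(\mathcal{V}_n)$, and the folding map descends to a $K$-quasiconformal map $\psi : W_n \to \mathbb{D}^*$. Conclusion (1) (being $C$-vertex-supported, and $\psi = \mathrm{id}$ off the support of $\psi_{\overline z}$) is inherited directly from the corresponding property of the half-plane folding map, since $\exp$ distorts the relevant neighborhoods by a bounded factor near $\mathbb{T}$. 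Conclusion (2), that $\psi$ is length-multiplying on edges $e \subset \partial W_n \cap \mathbb{T}$ with $\psi(e)$ an edge of $\mathbb{T}\setminus\mathcal{Z}_m$, follows because $\exp$ is length-multiplying on vertical edges (so the length-multiplying property survives conjugation) and because the ``standard'' partition of the model half-plane consists precisely of the unit edges between consecutive lattice points. Conclusion (3), the symmetric splitting of each tree-edge $e \subset \partial W_n \cap \mathbb{D}^*$, is the ``two-sided'' part of the folding lemma: each edge of a removed tree has two sides, and the lemma guarantees the two limiting images are placed symmetrically about the nearest points of $\mathcal{Z}_m^+$ and of $\mathcal{Z}_m^-$; this again transfers through $\exp$ because $\exp$ maps the real lattice to $\mathcal{Z}_m$ and preserves the relevant symmetry. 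Conclusion (4), the independence of $C$ and $K$ from $n$, is exactly the content of the uniform bounded geometry established in the first step, together with the fact that the constants in \cite{Bis15} depend only on those geometry constants.

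The main obstacle is verifying the uniform bounded geometry hypothesis rigorously enough to feed into \cite{Bis15}, and in particular checking that the \emph{relative} sizes of adjacent edges along $\partial\Omega_n$ — edges coming from the different $\partial D_i$ via the proper maps $B_n$, and edges coming from the connecting curves $\Gamma_i$ with the vertices placed in Definition \ref{curve_vertices_defn} — are comparable with constants independent of $n$. For the $\partial D_i$-edges this is precisely (\ref{first_car_conc}) (the derivative bound $n/M \le |B_n'| \le nM$ forces all boundary edges of a given $\partial D_i$ to have comparable length $\asymp 1/n$), and the right-angle meeting condition Proposition \ref{existence_of_curves}(2) plus the dyadic-then-equidistributed vertex placement in Definition \ref{curve_vertices_defn} ensures the transition across a point where a $\Gamma_i$ meets $\partial U$ is also of bounded geometry; one must also use that $\tau = \sigma^{-1}$ is conformal and (by Proposition \ref{max_diam_omega} and standard distortion estimates near an analytic boundary) distorts these comparabilities by only a bounded factor. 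Once bounded geometry is in hand with uniform constants, the rest is bookkeeping about how $\exp$ intertwines the half-plane and disk pictures, and the theorem follows.
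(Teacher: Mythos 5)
Your overall strategy matches the paper's: both pass to a logarithmic covering of the half-plane, verify a geometric condition on the induced periodic partition of the boundary, invoke Lemma 5.1 of \cite{Bis15}, and push the resulting folding map back to $\mathbb{D}^*$ via $\exp$. However, there are two interlocking inaccuracies in your proposal that you should fix.

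First, you use the covering map $\phi := \sigma \circ \exp : \mathbb{H}_r \to \Omega$ from (\ref{covering_map}), i.e.\ the one from the proof of Proposition \ref{first_folding_adjustment}. The paper's proof of Theorem \ref{complicated_folding_adjustment} instead uses the covering map of (\ref{covering_map2}),
\begin{equation*}
\phi := \sigma \circ \lambda \circ \exp \circ (z \mapsto -iz): \mathbb{H} \to \Omega,
\end{equation*}
which inserts the map $\lambda$ produced by Proposition \ref{first_folding_adjustment}. This is not cosmetic: $\lambda\circ\tau$ is length-multiplying on the edges of $\partial\Omega$ (Proposition \ref{first_folding_adjustment}(3)), and that is precisely what lets one transfer the comparability of boundary edge lengths on $\partial\Omega$ — coming from (\ref{first_car_conc}) and Definition \ref{curve_vertices_defn} — into comparability of the induced edges on $\partial\mathbb{H}$. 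Also, the final formulas $W_n := \exp(-i\Psi_n(\mathbb{H}))$ and $\psi := \exp\circ(-i)\circ\Psi_n^{-1}\circ i\log$ are built from this modified covering map; if you leave $\lambda$ out, the object you construct will not interact correctly with $\Omega_n' := (\lambda\circ\tau)^{-1}(W_n)$ (Notation \ref{modified_domain}) and the downstream length-multiplying argument in Theorem \ref{zmadjustment}.

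Second, you say the input to the folding lemma is a partition of \emph{bounded geometry} (adjacent edges comparable), which is the hypothesis of Theorem 4.3 of \cite{Bis15} used in Proposition \ref{first_folding_adjustment}. The hypothesis the paper actually verifies for Lemma 5.1 of \cite{Bis15} is the stronger condition that \emph{any two} edges of the induced partition on $\partial\mathbb{H}$ have comparable lengths, with constants independent of $n$. These are different conditions; the paper derives the stronger one from (\ref{first_car_conc}), Definition \ref{curve_vertices_defn}, and Proposition \ref{first_folding_adjustment}(2) together with the use of $\lambda$ in the covering map. Your outline, as written, only argues for bounded geometry and thus does not justify the application of Lemma 5.1. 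You should switch to the covering map (\ref{covering_map2}), and verify the ``all edges comparable'' condition rather than bounded geometry.
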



\begin{proof} We consider the $2\pi$-periodic covering map
\begin{equation}\label{covering_map2}\phi:=\sigma\circ\lambda\circ\exp\circ(z\mapsto -iz): \mathbb{H}\mapsto \Omega. \end{equation}
inducing a periodic partition $\phi^{-1}(\mathcal{V}_n)$ of $\partial\mathbb{H}$. By (\ref{first_car_conc}), Definition \ref{curve_vertices_defn}, and Proposition \ref{first_folding_adjustment}(2), any two edges of $\mathbb{H}$ have comparable lengths with constant independent of $n$. Therefore, Lemma 5.1 of \cite{Bis15} applies to yield a $2\pi$-periodic $K$-quasiconformal map $\Psi_n$ of $\mathbb{H}$ onto a subdomain $\Psi_n(\mathbb{H})\subsetneq\mathbb{H}$, with $K$ independent of $n$. We let 
\begin{equation}\nonumber W_n:=\exp(-i\Psi_n^{}(\mathbb{H})) \end{equation}
and
\begin{equation}\label{psifoldingdefn} \psi:= \exp\circ-i\Psi_n^{-1}\circ i\log: W_n\rightarrow\mathbb{D}^*. \end{equation}
The map (\ref{psifoldingdefn}) is well-defined, and the conclusions of the theorem follow from Lemma 5.1 of \cite{Bis15}.
\end{proof}

\begin{notation}\label{modified_domain} We will use the notation $\Omega_n':=(\lambda\circ\tau)^{-1}(W_n)$ (see Figure \ref{fig:folding_picture}).
\end{notation}

\begin{figure} 
\includegraphics[scale = .25]{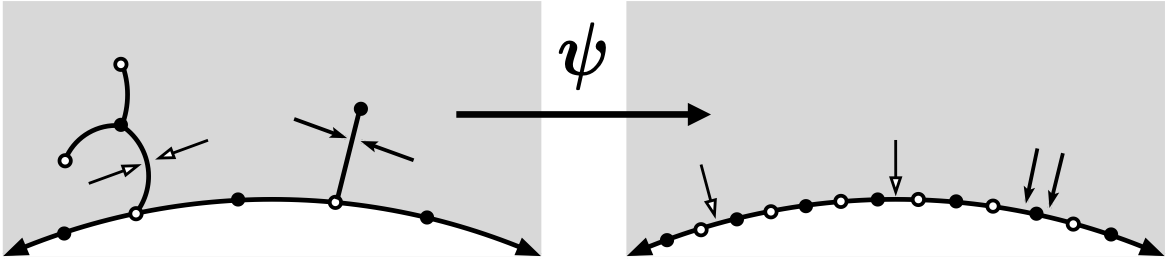}
\caption{ For any $x\in\partial W_n\cap\mathbb{D}^*$, there are two limits $\lim_{W_n\ni z\rightarrow x}\psi(z)\in \mathbb{T}$ as illustrated in this figure. Theorem \ref{complicated_folding_adjustment}(3) says that these two limits are equidistant from the nearest black vertex, and are equidistant from the nearest white vertex.  }
 \label{fig:DefnPsi}
\end{figure}


\section{Annular Interpolation Between the Identity and a Conformal Mapping}\label{extension_section}

Recall from Notation \ref{notation_Section4} that we have fixed $\varepsilon>0$, a compact set $K$, disjoint analytic domains $(D_i)_{i=1}^k$ so that $U:=\cup_iD_i$ contains $K$, and $f$ holomorphic in a neighborhood of $\overline{U}$ with $||f||_{\overline{U}}<1$. In this section, we briefly define two useful interpolations in Lemmas \ref{eta_i_psi} and \ref{squished_eta} which we will need.

Since the domain $D_i$ contains the compact set $K\cap D_i$, the definitions and results of Section \ref{blaschke_approximation} apply to $(\varepsilon, K\cap D_i, D_i, f|_{D_i})$ for each $1\leq i \leq k$ (see Notation \ref{Blaschke_notation}). Thus Remark \ref{dependence_notation} applies to define (\ref{gamma_i_defn}), (\ref{Psi_i_defn}) and (\ref{newbdefn}) in the following.


\begin{definition}\label{many_comps_vertex_defn} Let $1\leq i \leq k$. We define the Jordan curve \begin{equation}\label{gamma_i_defn}\gamma_i := \gamma(\varepsilon, K\cap D_i, D_i, f|_{D_i}). \end{equation} Recalling that $\textrm{int}(\gamma_i)$ denotes the bounded component of $\Chat\setminus\gamma_i$, we define 
\begin{equation}\label{Psi_i_defn} \Psi_i:=\Psi(\varepsilon, K\cap D_i, D_i, f|_{D_i}) \end{equation}
 to be a Riemann mapping $\Psi_i: \mathbb{D} \rightarrow \textrm{int}(\gamma_i)$. Lastly, we define the proper mappings
\begin{equation} \label{newbdefn} B_n:=B_n(\varepsilon, K\cap D_i, D_i, f|_{D_i}) \textrm{ on } D_i, \end{equation} where we suppress the dependence of $(B_n)_{n=1}^\infty$ on $i$ from the notation. 
\end{definition}

\noindent Recall that in Section \ref{folding_section}, we defined curves $\{\Gamma_i\}_{i=1}^{k-1}$ connecting the domains $D_i$, and in Notation \ref{omega_def_setup} we fixed a component $\Omega$ of the complement of $\overline{U}\cup\cup_{i=1}^{k-1}\Gamma_i$.

\begin{notation}\label{ell_defn} After relabeling the $(D_i)_{i=1}^k$ if necessary, there exists $1\leq \ell \leq k$ so that $\partial D_i \cap \partial\Omega\not=\emptyset$ if and only if $i \leq \ell$ (see Figure \ref{fig:notation_Section4} for example). For each $1\leq i \leq \ell$, note that the intersection $\partial D_i\cap \partial\Omega$ consists of a single Jordan curve which is mapped onto $\mathbb{T}$ by $B_n$.
\end{notation}


The two interpolations we will need are given in Lemmas \ref{eta_i_psi} and \ref{squished_eta} below. In Lemma \ref{eta_i_psi}, we define an interpolation $\eta_i^\Psi$ between $z\mapsto z$ on $|z|=2$ with $z\mapsto\Psi_i(z)$ on $|z|=1$ (see Figure \ref{fig:DefnEta}), and in Lemma \ref{squished_eta} we modify $\eta_i^\Psi$ to define a map $\eta_i$ so that $\eta_i(z)=\eta_i(\overline{z})$ for $|z|=1$.

\begin{figure} 
\includegraphics[scale = .23]{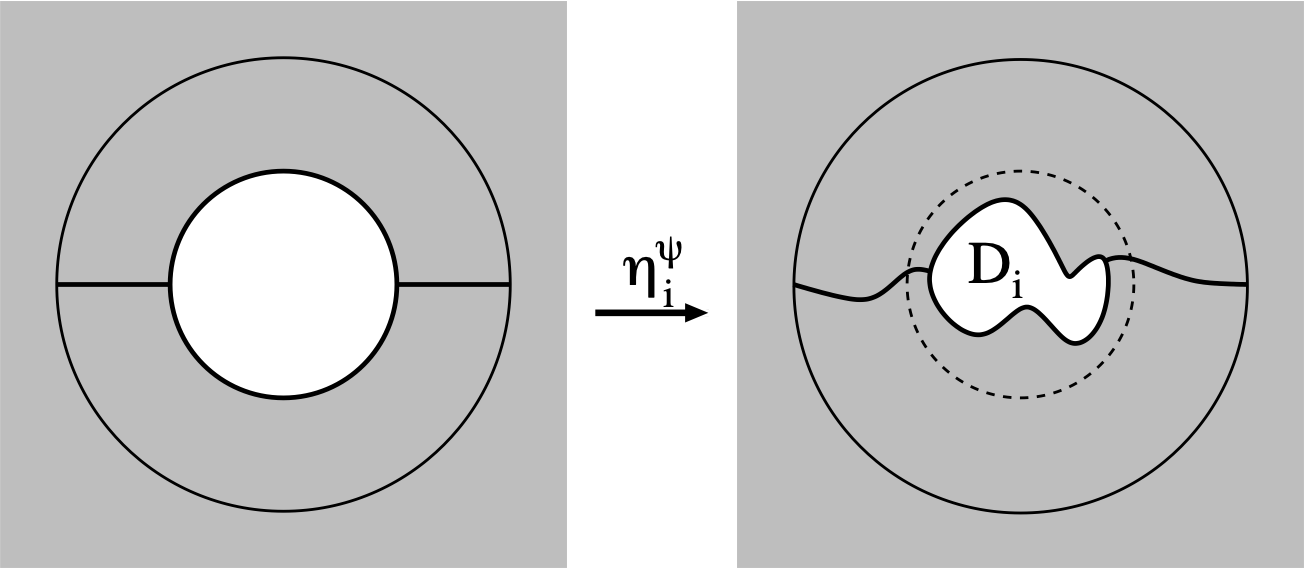}
\caption{ Illustrated is the map $\eta_i^\Psi: \mathbb{D}^* \rightarrow \Chat\setminus\Psi_i(\mathbb{D})$ of Lemma \ref{eta_i_psi}. The dotted circle on the right depicts the unit circle. }
 \label{fig:DefnEta}
\end{figure}

\begin{lem}\label{eta_i_psi} For each $1\leq i\leq\ell$, there is a quasiconformal mapping $\eta_i^\Psi: \mathbb{D}^* \rightarrow \Chat\setminus\Psi_i(\mathbb{D})$ satisfying the relations: 
\begin{equation}\label{first_desired} \eta_i^\Psi(z)=z \emph{ for } |z|\geq 2\emph{ and }  \end{equation}
\begin{equation}\label{second_desired} \eta_i^\Psi(z)=\Psi_i(z)\emph{ for all } |z|=1. \end{equation}
Moreover, if $D_i$, $D_j$ for $1\leq i, j\leq\ell$ are connected by one of the curves $(\Gamma_i)_{i=1}^{k-1}$, then
\begin{equation}\label{third_desired} \eta_i^\Psi([-2,-1])\cap\eta_{j}^\Psi([1,2])=\eta_i^\Psi([1,2])\cap\eta_{j}^\Psi([-2,-1])=\emptyset. \end{equation}
\end{lem}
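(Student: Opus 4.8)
\textbf{Proof strategy for Lemma \ref{eta_i_psi}.}
The plan is to build $\eta_i^\Psi$ as an explicit interpolation in the annulus $\{1<|z|<2\}$, glued to the identity on $|z|\geq 2$ and to $\Psi_i$ on $|z|=1$. First I would observe that $\Psi_i$ extends to a homeomorphism of $\overline{\mathbb{D}}$ onto $\overline{\textrm{int}(\gamma_i)}$ (and is in fact analytic up to the boundary, since $\gamma_i$ is an analytic Jordan curve), so that $z\mapsto\Psi_i(z)$ restricted to $\mathbb{T}$ is a well-defined real-analytic parametrization of $\gamma_i$. The target $\Chat\setminus\Psi_i(\mathbb{D})$ is a closed topological disc in $\Chat$ containing $\infty$, whose boundary is $\gamma_i$; the subannulus $\{1\le|z|\le2\}$ is to be mapped onto the closed region between $\gamma_i=\Psi_i(\mathbb{T})$ and the round circle $\{|w|=2\}$. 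The standard way to do this is to write, for $1\le|z|\le2$, $z=\rho e^{i\theta}$ and set
\begin{equation}\nonumber
\eta_i^\Psi(\rho e^{i\theta}) := (\rho-1)\cdot \rho e^{i\theta} + (2-\rho)\cdot\Psi_i(e^{i\theta}),
\end{equation}
or a smoothed version thereof; this is a diffeomorphism of the closed annulus onto the closed region provided the two boundary curves are, say, both star-shaped about a common point and the linear homotopy stays injective — which can always be arranged after first conjugating by a Möbius map (recall $\infty\notin\gamma_i$ so a translation/scaling suffices) and, if necessary, subdividing $[1,2]$ into finitely many subannuli on each of which the relevant curves are close enough for the linear interpolant to be injective. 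On each such piece the map is smooth with non-vanishing Jacobian on a compact set, hence quasiconformal with some finite dilatation, and the pieces glue to a global quasiconformal $\eta_i^\Psi:\mathbb{D}^*\to\Chat\setminus\Psi_i(\mathbb{D})$; relations \eqref{first_desired} and \eqref{second_desired} hold by construction.

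The one genuinely new requirement is the disjointness statement \eqref{third_desired}: when $D_i$ and $D_j$ are joined by a curve $\Gamma_m$, the two radial segments $\eta_i^\Psi([1,2])$ and $\eta_j^\Psi([-2,-1])$ (and the symmetric pair) must be kept apart. The key point is that we have complete freedom in choosing the \emph{angular coordinate} used in the interpolation above: precomposing $\eta_i^\Psi$ with a rotation $z\mapsto e^{i\alpha_i}z$ of $\mathbb{D}^*$ (equivalently, choosing where the images of the segments $[1,2]$ and $[-2,-1]$ land) does not affect \eqref{first_desired} since the identity is rotation-invariant on each circle $|z|=\rho$... wait, that is false for $|z|<2$ in general, so more care is needed: instead I would keep $\eta_i^\Psi=\mathrm{id}$ for $|z|\ge 2$ fixed, but observe that the \emph{two radial segments in question are contained in $\{1\le|z|\le 2\}$}, a small region, and their images under $\eta_i^\Psi$ lie in a small neighborhood of $\partial D_i$'s counterpart, namely in $\mathbb{C}\setminus\Psi_i(\mathbb{D})$ near $\gamma_i$. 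Since $\textrm{int}(\gamma_i)$ and $\textrm{int}(\gamma_j)$ are disjoint (they sit inside neighborhoods of the disjoint sets $f(K\cap D_i)$ and $f(K\cap D_j)$ by \eqref{gamma_close_to_f(K)}, shrinking $\varepsilon$ if need be, or more robustly because $\Psi_i(\mathbb{D})$ and $\Psi_j(\mathbb{D})$ can be taken disjoint), the images $\eta_i^\Psi(\{1\le|z|\le2\})$ and $\eta_j^\Psi(\{1\le|z|\le2\})$ need not be disjoint, but we only need four specific \emph{arcs} to avoid each other. I would arrange this by choosing, independently for each $i\le\ell$, the rotation parameter defining $\eta_i^\Psi$ so that the segment images point "away" from the finitely many partner domains — concretely, since there are only finitely many constraints and a one-parameter family of admissible rotations for each $i$ (a full circle's worth of choices of where $[1,2]$ goes), a generic choice avoids all the bad configurations. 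This is where I would spend the most care.

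Thus the main obstacle is \eqref{third_desired}, and its resolution is a counting/genericity argument rather than a hard estimate: for each joined pair $(i,j)$ one has two pairs of arcs to separate, the arcs are determined by the rotation parameters $\alpha_i,\alpha_j\in\mathbb{T}$, the "collision" locus in the torus of parameters $\prod_{i\le\ell}\mathbb{T}$ is a finite union of positive-codimension sets, so a generic parameter works. Everything else — quasiconformality of $\eta_i^\Psi$, the boundary identifications, extension by the identity — is routine, and the uniformity in $n$ is not even asserted here (the curves $\gamma_i$, $\Psi_i$ do not depend on $n$), so no degeneration issue arises at this stage. I would write up the annular interpolation via the linear-homotopy formula above (with the subdivision remark to guarantee injectivity), verify \eqref{first_desired}–\eqref{second_desired} directly, and then devote a short paragraph to the genericity argument for \eqref{third_desired}.
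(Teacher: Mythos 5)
Your overall architecture matches the paper's: interpolate in the annulus $\{1\le|z|\le 2\}$ between the identity on $|z|=2$ and $\Psi_i$ on $|z|=1$, extend by the identity outside, and then use the rotational freedom in the choice of $\Psi_i$ to handle \eqref{third_desired}. The paper obtains the interpolation by citing a standard extension lemma for quasisymmetric maps between boundaries of quasiannuli (Proposition 2.30(b) of \cite{MR3445628}) rather than by your explicit linear homotopy; your formula needs more care than you give it, since $\gamma_i$ is an arbitrary analytic Jordan curve that no M\"obius map will make star-shaped, and the subdivision into subannuli presupposes a family of intermediate curves joining $\gamma_i$ to $2\mathbb{T}$ together with a proof that the linear interpolant between consecutive ones is injective. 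This is fixable (it is exactly what the cited lemma packages), so I regard it as a presentational rather than a substantive issue.

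The genuine gap is in your treatment of \eqref{third_desired}. Your claim that the ``collision locus'' in the parameter torus $\prod_i\mathbb{T}$ is a finite union of positive-codimension sets is unjustified and, as a general principle, false: a transversal intersection of the two arcs $\eta_i^\Psi([1,2])$ and $\eta_j^\Psi([-2,-1])$ is \emph{stable} under small perturbations of either arc, so the set of rotation parameters producing an intersection is typically open, not thin, and can in principle be all of the torus. Rotating $\Psi_i$ only moves the inner endpoint $\Psi_i(1)$ along $\gamma_i$ while the outer endpoint stays pinned at $2$; it does not let you re-route the arc around an obstruction. The paper supplements the rotation of the points $\Psi_i(\pm 1)$ with a second, much more flexible operation: post-composing $\eta_i^\Psi$ with diffeomorphisms of $2\mathbb{D}\setminus\Psi_i(\mathbb{D})$ that preserve \eqref{first_desired} and \eqref{second_desired}. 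This allows the arcs $\eta_i^\Psi([\pm 1,\pm 2])$ to be redrawn essentially arbitrarily inside their ambient annular regions (rel endpoints), which is what actually delivers the disjointness in \eqref{third_desired}. Without some such mechanism for moving the whole arc, and not just its endpoint, your argument does not close.
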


\begin{proof} The existence of $\eta_i^\Psi$ satisfying (\ref{first_desired}) and  (\ref{second_desired}) follows from a standard lemma on the extension of quasisymmetric maps between boundaries of quasiannuli (see, for instance, Proposition 2.30(b) of \cite{MR3445628}). If (\ref{third_desired}) fails for the collection $(\eta^\Psi_i)_{i=1}^l$ thus defined, we can renormalize the conformal mappings $(\Psi_i)_{i=1}^\ell$ appropriately (to rotate the points $\Psi_i(\pm 1)$ along the curve $\Psi_i(\mathbb{T})$), and post-compose a subcollection of the $\eta^\Psi_i$ by diffeomorphisms of $2\mathbb{D} \setminus \Psi_i(\mathbb{D})$ so that (\ref{third_desired}) is satisfied, and (\ref{first_desired}) and  (\ref{second_desired}) still hold. 
\end{proof}

\begin{lem}\label{squished_eta} For each $1\leq i\leq\ell$, there is a quasiconformal mapping
 \begin{equation}\nonumber \eta_i: \mathbb{D}^* \rightarrow \mathbb{C}\setminus \Psi_i([-1,1])  \end{equation}
satisfying the relations
\begin{equation}\label{first_eta_i_R} \eta_i(z)=z \emph{ for } |z|\geq 2, \end{equation}
 \begin{equation}\label{conjugacy_relation} \eta_i(z)=\eta_i(\overline{z}) \emph{ for } |z|=1\emph{, and } \end{equation}
 \begin{equation}\label{last_eta_i_R} \eta_i(z)=\eta_i^\Psi(z)\emph{ for } z\in\mathbb{R}\cap\mathbb{D}^*.  \end{equation}
\end{lem}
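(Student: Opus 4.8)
The plan is to build $\eta_i$ by first taking the quasiconformal map $\eta_i^\Psi \colon \mathbb D^* \to \Chat \setminus \Psi_i(\mathbb D)$ from Lemma \ref{eta_i_psi} and then "folding" the image annulus $2\mathbb D \setminus \Psi_i(\mathbb D)$ down onto the slit domain $2\mathbb D \setminus \Psi_i([-1,1])$. Concretely, the boundary curve $\Psi_i(\mathbb T)$ should be collapsed onto the arc $\Psi_i([-1,1])$ in such a way that the two points $\Psi_i(e^{i\theta})$ and $\Psi_i(e^{-i\theta})$ get identified. I would first handle this in the model coordinates: on the round annulus $\{1 \le |z| \le 2\}$, consider the map that is the identity on $|z|=2$ and on $|z|=1$ sends $z \mapsto \tfrac12(z + \bar z) = \re(z)$, i.e. collapses the unit circle onto the segment $[-1,1]$ by vertical projection. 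This radial-type interpolation $h(z)$ on $\{1\le|z|\le2\}$ is a well-defined quasiconformal map onto $(2\mathbb D)\setminus[-1,1]$ as long as we interpolate the modulus of $z$ and its "squished" position linearly in $|z|$; the only subtlety is that $h$ is not injective on $|z|=1$ (by design, since $z$ and $\bar z$ have the same image), but it is a genuine homeomorphism from the closed annulus with the circle $|z|=1$ having its two half-circles glued, which is exactly what a slit domain is.

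Next I would transport this model fold through $\Psi_i$. Since $\Psi_i$ extends to a homeomorphism of $\overline{\mathbb D}$ (the curve $\gamma_i$ is analytic, so $\Psi_i$ extends analytically across $\mathbb T$), the conjugated map $\Psi_i \circ h \circ \Psi_i^{-1}$ is a quasiconformal map from $(2\mathbb D)\setminus \Psi_i(\mathbb D)$ onto $\mathbb C \setminus \Psi_i([-1,1])$ which is the identity near $|z|=2$ and collapses $\Psi_i(\mathbb T)$ onto $\Psi_i([-1,1])$, identifying $\Psi_i(e^{i\theta})$ with $\Psi_i(e^{-i\theta})$. Then I set
\begin{equation}\nonumber
\eta_i := \left(\Psi_i \circ h \circ \Psi_i^{-1}\right) \circ \eta_i^\Psi \colon \mathbb D^* \to \mathbb C \setminus \Psi_i([-1,1]).
\end{equation}
Relation (\ref{first_eta_i_R}) holds because $\eta_i^\Psi(z)=z$ for $|z|\ge 2$ and $\Psi_i\circ h\circ\Psi_i^{-1}$ is the identity on a neighborhood of $\partial(2\mathbb D)$ (one arranges $h$ to be the identity for $|z|$ near $2$). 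For (\ref{conjugacy_relation}): if $|z|=1$ then $\eta_i^\Psi(z)=\Psi_i(z)$ by (\ref{second_desired}), and $\Psi_i\circ h\circ\Psi_i^{-1}(\Psi_i(z)) = \Psi_i(h(z)) = \Psi_i(\re z)$, which is visibly invariant under $z\mapsto\bar z$; hence $\eta_i(z)=\eta_i(\bar z)$. For (\ref{last_eta_i_R}): when $z\in\mathbb R\cap\mathbb D^*$, i.e. $z\in(-\infty,-1]\cup[1,\infty)$, the values $\eta_i^\Psi(z)$ lie in $\Psi_i([-2,-1])\cup\Psi_i([1,2])$ by construction of $\eta_i^\Psi$ (it maps real rays to the images of real rays), and on that set $\Psi_i^{-1}$ lands in $\mathbb R$, where $h$ is the identity ($\re$ fixes reals and the modulus-interpolation fixes points already on $[1,2]\cup[-2,-1]$); therefore $\eta_i = \eta_i^\Psi$ there.

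The main obstacle I anticipate is checking that the glued/folded map is genuinely quasiconformal rather than merely a homeomorphism — in particular that the fold along $\Psi_i(\mathbb T)$ does not produce infinite dilatation. This is really a statement about the model map $h$: one must interpolate carefully (not a naive linear interpolation of Cartesian coordinates, which degenerates at $z=\pm 1$ where the circle meets the slit endpoints) so that the dilatation stays bounded near the endpoints $\pm 1$ of the slit. The standard fix is to use a bi-Lipschitz-in-the-right-coordinates interpolation near $\pm 1$ — e.g. model the fold near an endpoint on the map $w\mapsto w^2$ composed with bounded distortions — and away from $\pm 1$ the vertical-projection fold is visibly quasiconformal. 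Since we only need \emph{some} quasiconformal $\eta_i$ with these three boundary/symmetry relations (no uniformity in $n$ is claimed here, as this lemma is about the fixed data $(\varepsilon, K, D_i, f)$), a fixed choice of such an interpolation suffices, and the argument is routine once the endpoint behavior is set up correctly.
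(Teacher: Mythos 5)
Your overall strategy---collapse the inner boundary curve onto the slit $\Psi_i([-1,1])$ by a ``fold'' that identifies $\Psi_i(e^{i\theta})$ with $\Psi_i(e^{-i\theta})$, interpolate out to the identity on $|z|=2$, and watch the dilatation at the slit endpoints---is exactly the right idea and is the same one the paper uses. However, your concrete implementation has a well-definedness gap: the map $\Psi_i\circ h\circ\Psi_i^{-1}$ does not make sense on $(2\mathbb{D})\setminus\Psi_i(\mathbb{D})$. The Riemann map $\Psi_i:\mathbb{D}\to\mathrm{int}(\gamma_i)$ extends analytically only to a neighborhood of $\overline{\mathbb{D}}$ (because $\gamma_i$ is analytic), not to the round annulus $\{1\le|z|\le 2\}$, and $\Psi_i^{-1}$ is not defined on the region between $\gamma_i$ and $|w|=2$ where you want to apply it. The same problem infects your verification of (\ref{last_eta_i_R}): for $1<z<2$ the point $\eta_i^\Psi(z)$ lies on the arc $\eta_i^\Psi([1,2])$, which is outside $\Psi_i(\overline{\mathbb{D}})$ except at its endpoint, so ``$\Psi_i^{-1}$ lands in $\mathbb{R}$'' there is not meaningful. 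Also note that the two annuli $(2\mathbb{D})\setminus\Psi_i(\mathbb{D})$ and $\{1\le|z|\le2\}$ are conformally different, so even a corrected transport of the model fold cannot be a literal conjugation.

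The repair is essentially what the paper does: only the \emph{boundary} values of the collapse need to be transported through $\Psi_i$, and on $\gamma_i=\Psi_i(\mathbb{T})$ the map $\Psi_i^{-1}$ is perfectly well defined. Concretely, split $\eta_i^\Psi(A(1,2))$ into the two quasidisks $\eta_i^\Psi(A(1,2)\cap\pm\mathbb{H})$; on the boundary of each, prescribe the quasisymmetric homeomorphism that is the identity on $\eta_i^\Psi(\pm[1,2])$, $\eta_i^\Psi(\pm[-2,-1])$ and on $2\mathbb{T}\cap\pm\mathbb{H}$, and equals $\Psi_i\circ\eta\circ\Psi_i^{-1}$ on $\Psi_i(\mathbb{T}\cap\pm\mathbb{H})$, where $\eta$ is a fixed quasisymmetric (e.g.\ M\"obius) map of the half-circle onto $[-1,1]$ fixing $\pm1$; then invoke the quasiconformal extension theorem for quasidisks (Proposition 2.30(a) of \cite{MR3445628}) on each half separately and set $\eta_i:=g\circ\eta_i^\Psi$. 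This automatically gives (\ref{first_eta_i_R}) and (\ref{last_eta_i_R}) (the extension agrees with the prescribed identity boundary values), gives (\ref{conjugacy_relation}) provided the two half-circle collapses are chosen conjugate-symmetrically, and outsources your endpoint-dilatation worry at $\pm1$ to the extension theorem rather than to a hand-built interior model map.
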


\begin{proof} Define
\begin{equation}\nonumber \gamma_i^+ := \eta_i^\Psi(\partial(A(1,2)\cap \mathbb{H})). \end{equation} 
Let $\eta$ be a quasisymmetric mapping of $\mathbb{T}\cap\mathbb{H}$ onto $[-1,1]$ fixing $\pm1$ (one can take $\eta:=M|_{\mathbb{T}\cap\mathbb{H}}$ where $M$ is a Mobius transformation mapping $-1$, $1$, $i$ to $-1$, $1$, $0$, respectively).
Define a mapping $g$ on $\gamma_i^+$ by: 
\begin{equation}\label{definition_of_g_CB} g(z):= \begin{cases} 
 \Psi_i\circ\eta\circ\Psi_i^{-1}(z) & z\in\Psi_i(\mathbb{T}\cap\mathbb{H}) \\
 z & \textrm{otherwise} \\	 
  \end{cases} 
\end{equation}
Since $g$ is a quasisymmetric mapping, a standard lemma on extension of quasisymmetric maps between boundaries of quasidisks (see, for instance, Proposition 2.30(a) of \cite{MR3445628}) implies that $g$ may be extended to a quasiconformal mapping of $\eta_i^\Psi(A(1,2) \cap \mathbb{H})$. Define $g$ similarly in $\eta_i^\Psi(A(1,2)\cap(-\mathbb{H}))$. We let $\eta_i:=g\circ\eta_i^\Psi$. It is then straightforward to check that $\eta_i$ satisfies (\ref{first_eta_i_R})-(\ref{last_eta_i_R}).
\end{proof}

\begin{rem} Lemmas \ref{eta_i_psi} and \ref{squished_eta} define $2\ell$ many quasiconformal mappings: $\{\eta_i^\Psi\}_{i=1}^\ell$ and $\{\eta_i\}_{i=1}^\ell$. The definition of the mappings $\eta_i^\Psi$, $\eta_i$ depend on the objects $\varepsilon$, $K$, $(D_i)_{i=1}^k$, $f$ as fixed in Notation \ref{notation_Section4}, but not on the parameter $n$ in (\ref{newbdefn}). Thus we record the trivial but important observation that the mappings $\{\eta_i^\Psi\}_{i=1}^\ell$ and $\{\eta_i\}_{i=1}^\ell$ are quasiconformal with a constant independent of $n$.
\end{rem}

\section{Annular Interpolation Between a Proper Mapping and a Power Map}\label{extension_section2}

Recall that we have fixed $\varepsilon>0$, a compact set $K$, disjoint analytic domains $(D_i)_{i=1}^k$ so that $U:=\cup_iD_i$ contains $K$, and $f$ holomorphic in a neighborhood of $\overline{U}$ with $||f||_{\overline{U}}<1$. The curves $\{\Gamma_i\}_{i=1}^{k-1}$ connect the domains $(D_i)_{i=1}^k$, and $\Omega$ is a component of the complement of $\overline{U}\cup\cup_{i=1}^{k-1}\Gamma_i$ with $\tau: \Omega\rightarrow\mathbb{D}^*$ conformal. Recall that the domain $\Omega_n'$ was defined in Theorem \ref{complicated_folding_adjustment} and Notation \ref{modified_domain} by removing from $\Omega$ a collection of trees rooted at the vertices along $\partial\Omega$, and the map $\psi\circ\lambda\circ\tau$ maps $\Omega_n'$ onto $\mathbb{D}^*$ (see Proposition \ref{first_folding_adjustment} and Theorem \ref{complicated_folding_adjustment}). 

\begin{notation} Recall from Notation \ref{ell_defn} that $\partial D_i\cap\partial\Omega\not=\emptyset$ if and only if $1\leq i \leq \ell$. Hence exactly $\ell-1$ of the curves $(\Gamma_i)_{i=1}^{k-1}$ intersect $\partial\Omega$. By relabelling the $(\Gamma_i)_{i=1}^{k-1}$ if necessary, we may assume $\Gamma_j$ intersects $\partial\Omega$ if and only if $1\leq j \leq\ell-1$.
\end{notation}

Let $m=m(n)$ be as in Theorem \ref{complicated_folding_adjustment}. To prove our main results, we will need to modify $z\mapsto z^m$ in $\mathbb{D}^*$ so that, roughly speaking, $(z\mapsto z^m)\circ\psi\circ\lambda\circ\tau(z)$ agrees with the proper mappings $B_n$ (see Definition \ref{many_comps_vertex_defn}) along $\partial D_i$. This is done in Theorem \ref{zmadjustment} below (see \cite{2021arXiv210104219B} for a related result). Its proof uses the following. 

\begin{prop}\label{lengthmultprop} Suppose $\phi_1$, $\phi_2$ are $C^1$ homeomorphisms of a $C^1$ Jordan arc $e$ such that: 
\begin{enumerate} \item $\phi_1(e)=\phi_2(e)$, \item $\phi_1$, $\phi_2$ agree on the two endpoints of $e$, and \item $|\phi_1'(z)|=|\phi_2'(z)|$ for all $z\in e$. \end{enumerate} Then $\phi_1=\phi_2$ on $e$. 
\end{prop}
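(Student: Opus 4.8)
The plan is to reduce the statement to a one-dimensional ODE/uniqueness argument along the arc $e$. First I would parametrize $e$ by arc length, writing $\gamma:[0,L]\to e$ with $|\gamma'(s)|=1$, so that $L=\mathrm{length}(e)$ and $\gamma(0)$, $\gamma(L)$ are the two endpoints. Since $\phi_1,\phi_2$ are $C^1$ homeomorphisms of $e$ onto the common Jordan arc $f:=\phi_1(e)=\phi_2(e)$, each $\phi_k\circ\gamma$ is a $C^1$ homeomorphism of $[0,L]$ onto $f$; composing with an arc-length parametrization $\delta:[0,L']\to f$ of $f$ (where $L'=\mathrm{length}(f)$), we obtain strictly monotone $C^1$ bijections $u_k:=\delta^{-1}\circ\phi_k\circ\gamma:[0,L]\to[0,L']$, $k=1,2$. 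Because $\phi_1,\phi_2$ agree on the endpoints of $e$ and are homeomorphisms, both preserve orientation (or both reverse it); after possibly replacing $\delta$ by $s\mapsto\delta(L'-s)$ we may assume both $u_k$ are increasing, so $u_k(0)=0$ and $u_k(L)=L'$.

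The key point is that hypothesis (3), $|\phi_1'(z)|=|\phi_2'(z)|$ for all $z\in e$, translates by the chain rule into $u_1'(s)=u_2'(s)$ for all $s\in[0,L]$: indeed $\frac{d}{ds}(\phi_k\circ\gamma)(s)=\phi_k'(\gamma(s))\gamma'(s)$ has modulus $|\phi_k'(\gamma(s))|$ since $|\gamma'(s)|=1$, and $\delta^{-1}$ has unit-speed derivative along $f$, so $u_k'(s)=|\phi_k'(\gamma(s))|$ (up to the common sign fixed above). Hence $u_1$ and $u_2$ have the same derivative on $[0,L]$ and agree at $s=0$ (both equal $0$), so $u_1\equiv u_2$ by the fundamental theorem of calculus. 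Unwinding, $\phi_1\circ\gamma=\delta\circ u_1=\delta\circ u_2=\phi_2\circ\gamma$ on $[0,L]$, and since $\gamma$ is onto $e$ this gives $\phi_1=\phi_2$ on $e$.

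There is no serious obstacle here — the statement is essentially the observation that a $C^1$ reparametrization of an arc is determined by its speed and one boundary value. The only points requiring a line of care are: (i) justifying that $\phi_1$ and $\phi_2$ induce the \emph{same} orientation on $f$, which follows from condition (2) together with them being homeomorphisms of the arc (a homeomorphism of an arc is determined up to orientation by where it sends the two endpoints, and here the endpoint images coincide); and (ii) noting that the hypotheses already implicitly force $L=L'$ once one integrates $u_1'=u_2'$ over $[0,L]$ and compares with $u_k(L)=L'$, so the setup is consistent. Everything else is routine calculus, so I would present it compactly.
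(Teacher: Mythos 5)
Your argument is correct and is precisely the one the paper intends: the paper simply remarks that the proposition ``is a consequence of the Fundamental Theorem of Calculus and is left to the reader,'' and your reduction to the arc-length reparametrizations $u_1,u_2$ with equal derivatives and equal initial values is the natural way to carry that out. The orientation point you flag in (i) is the only step needing a word, and you handle it correctly via condition (2).
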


\noindent The proof of Proposition \ref{lengthmultprop} is a consequence of the Fundamental Theorem of Calculus and is left to the reader. 

\begin{thm}\label{zmadjustment} For every $n$, there exists a locally univalent $K$-quasiregular mapping $h_n: \mathbb{D}^* \rightarrow\mathbb{D}^*$ so that:
\begin{enumerate}
\item $h_n(z)=z^m$ for $|z|\geq\sqrt[\leftroot{-2}\uproot{2}m]{2}$ where $m:=m(n)$ is as in Theorem \ref{complicated_folding_adjustment},
\item $h_n\circ\psi\circ\lambda\circ\tau(z)=B_n(z)$ for every $z\in\partial D_i$ and $1\leq i \leq l$, and
\item $K$ is independent of $n$.
 \end{enumerate}
\end{thm}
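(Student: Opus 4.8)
Here is a plan for proving Theorem~\ref{zmadjustment}.

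\medskip

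The plan is to first determine the boundary values $h_n|_{\partial\mathbb D^*}$ that condition~(2) forces, and then extend across the thin annulus $\{1\le|z|\le\sqrt[m]{2}\}$ by an explicit formula while keeping $h_n=(z\mapsto z^m)$ outside it. Write $\Phi_n:=\psi\circ\lambda\circ\tau$. By Proposition~\ref{first_folding_adjustment}(3) and Theorem~\ref{complicated_folding_adjustment}(2) the map $\Phi_n$ is length--multiplying on every edge $e$ of $\partial\Omega$ and sends it onto a single component $A_e$ of $\mathbb T\setminus\mathcal Z_m$ (so $|A_e|=\pi/m$), carrying the vertices of $\partial\Omega$ onto $\mathcal Z_m$ compatibly with the black/white $2$--colouring (black $\to\mathcal Z_m^+$, white $\to\mathcal Z_m^-$). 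Recall that for $i\le\ell$ the curve $\partial D_i$ lies on $\partial\Omega$ (Notation~\ref{ell_defn}) and is cut by the vertices into edges $e\in\mathcal I_n$, each of which $B_n$ maps homeomorphically onto a half--circle $\mathbb T\cap\mathbb H$ or $\mathbb T\cap(-\mathbb H)$. Define $H_n\colon\mathbb T\to\mathbb T$ by $H_n:=B_n\circ(\Phi_n|_e)^{-1}$ on $A_e$ when $e$ is such an edge, and $H_n(z):=z^m$ on every other component of $\mathbb T\setminus\mathcal Z_m$ (the images of the edges on the $\Gamma_j$ and on the removed trees). Since $B_n$ and $z\mapsto z^m$ each traverse a half--circle across every component of $\mathbb T\setminus\mathcal Z_m$, with orientations matched by the colouring, $H_n$ is a well--defined, orientation--preserving local diffeomorphism of $\mathbb T$, real--analytic on each closed arc, of degree $m$; any $h_n$ with $h_n|_{\partial\mathbb D^*}=H_n$ then satisfies $h_n\circ\Phi_n=B_n$ on $\partial D_i$, which is (2).

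\medskip

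Two estimates, with constants independent of $n$, drive everything. First, $|H_n'|\asymp m$: on the ``$z^m$'' arcs this is an equality, and on $A_e$ with $e\subset\partial D_i$ length--multiplicativity of $\Phi_n$ gives $|H_n'|\equiv|B_n'|\cdot|e|/|A_e|$, where $|B_n'|\asymp n$ by~(\ref{first_car_conc}), $|A_e|=\pi/m$, and $|e|\asymp 1/n$ because $\pi=\int_e|B_n'|$ together with~(\ref{first_car_conc}). Second, let $\alpha_n\colon\mathbb R\to\mathbb R$ be the increasing lift of $H_n$, so $\alpha_n(t+2\pi)=\alpha_n(t)+2\pi m$ and (by the first estimate) $\alpha_n'\asymp m$; put $\beta_n(t):=\alpha_n(t)-mt$. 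Then $\beta_n$ is $2\pi$--periodic, is constant on each ``$z^m$'' arc, and across every component of $\mathbb T\setminus\mathcal Z_m$ both $\alpha_n$ and $t\mapsto mt$ increase by exactly $\pi$, so $\beta_n$ has equal values at the two endpoints of every such arc; hence $\beta_n$ returns to the same value at every point of $\mathcal Z_m$. Normalising that value to $0$, on each arc $A$ (of length $\pi/m$) we get $|\beta_n|\le\tfrac12\int_A|\beta_n'|\le\tfrac12(\sup|\alpha_n'|+m)\,|A|=O(1)$ uniformly in $n$. This boundedness of the ``angular defect'' is the crucial point.

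\medskip

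Now, for $0\le s\le\delta:=\tfrac1m\log 2$, define
\begin{equation}\nonumber
 h_n(e^{s+it}):=\exp\!\Big(ms+i\big(mt+(1-s/\delta)\beta_n(t)\big)\Big),
\end{equation}
and $h_n(z):=z^m$ for $|z|\ge\sqrt[m]{2}$. At $s=0$ this equals $e^{i\alpha_n(t)}=H_n(e^{it})$, and at $s=\delta$ it equals $e^{\log 2+imt}=(e^{\delta+it})^m$, so the two formulae agree on $|z|=\sqrt[m]{2}$, giving (1); also $|h_n|=e^{ms}\in(1,2)$ on the annulus, so $h_n\colon\mathbb D^*\to\mathbb D^*$. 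Since $\exp$ is locally conformal, $h_n$ on the annulus has the same dilatation as the strip map $s+it\mapsto ms+i\Theta(s,t)$ with $\Theta(s,t)=mt+(1-s/\delta)\beta_n(t)$, whose Jacobian is lower triangular with entries $m$, $0$, $\partial_s\Theta=-(\log 2)^{-1}m\,\beta_n(t)$, and $\partial_t\Theta=(s/\delta)\,m+(1-s/\delta)\,\alpha_n'(t)$ --- a convex combination of $m$ and $\alpha_n'(t)$. By the two estimates $\partial_t\Theta\asymp m$ (so the Jacobian is positive and $h_n$ is locally univalent on the annulus) and $|\partial_s\Theta|\le(\log 2)^{-1}m\,\|\beta_n\|_\infty\lesssim m$; dividing the Jacobian by $m$ gives a matrix with bounded entries and determinant bounded below, uniformly in $n$, so $h_n$ is $K$--quasiregular on the annulus with $K$ independent of $n$. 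Outside the annulus $z\mapsto z^m$ is holomorphic and locally univalent on $\mathbb D^*$; the two pieces coincide on $|z|=\sqrt[m]{2}$ and are orientation--preserving local homeomorphisms on each side, so they glue to a globally locally univalent $K$--quasiregular map. This yields (1)--(3) with $K$ independent of $n$.

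\medskip

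The main obstacle is exactly the boundedness of $\beta_n$. The interpolation lives in an annulus of modulus $\tfrac1{2\pi m}\log 2\to0$, so naively ``unwinding'' the angular discrepancy between $H_n$ and $z\mapsto z^m$ across it produces a radial derivative of size $\asymp m\,\|\beta_n\|_\infty$, which must stay comparable to the angular derivative $\asymp m$; this is precisely why one needs the arcs where $H_n\neq z^m$ to have length $\pi/m$ and the colouring that pins $\beta_n$ to a common value on $\mathcal Z_m$, forcing $\|\beta_n\|_\infty=O(1)$. The remaining ingredients --- the well--definedness and degree of $H_n$ (an orientation bookkeeping using the colouring conventions and the definition of $\Phi_n$), the estimate $|H_n'|\asymp m$, and the quasiconformal gluing along $|z|=\sqrt[m]{2}$ --- are routine given Sections~\ref{blaschke_approximation2}--\ref{folding_section}.
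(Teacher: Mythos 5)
Your proposal is correct and follows essentially the same route as the paper: both pass to logarithmic coordinates, interpolate linearly across the thin annulus $\{1\le|z|\le 2^{1/m}\}$ between the boundary map forced by condition (2) and $z\mapsto z^m$, and control the dilatation via the estimate $|B_n'|\asymp n$ from (\ref{first_car_conc}) together with the length-multiplying property of $\psi\circ\lambda\circ\tau$. The two presentations differ mainly in packaging: the paper works edge by edge, chooses an abstract length-multiplying parametrization $f$ of each edge, cites an interpolation lemma (Theorem A.1 of \cite{MR4008367}) for the quasiconformality of the rectangle interpolation, and uses Proposition \ref{lengthmultprop} to verify that the resulting $h_n$ actually satisfies (2); you instead define the boundary map $H_n$ directly as $B_n\circ(\Phi_n|_e)^{-1}$ (so (2) holds by construction, no need for Proposition \ref{lengthmultprop}), write the annular interpolation in a single explicit global formula, isolate the ``angular defect'' $\beta_n=\alpha_n-mt$, and verify $\|\beta_n\|_\infty=O(1)$ by hand --- this is exactly what the cited interpolation lemma is doing under the hood. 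Your version is thus a self-contained rewrite of the same proof rather than a new route. One point you explicitly defer but that deserves a real check (the paper supplies it): $B_n$ is orientation-preserving on the outer boundary component of $D_i$ and orientation-reversing on the inner ones, and one must verify that this combines with the orientation induced by $\Omega$ and the black/white compatibility of the folding map so that $H_n$ is in fact monotone (degree $m$) and $\alpha_n'>0$ on every arc; without this your well-definedness and lower bound on $\partial_t\Theta$ are not established.
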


\begin{proof} Fix the standard branch of $\log$. Given an edge $e\in\partial D_i$, we have by Theorem \ref{complicated_folding_adjustment} that
\begin{equation}\label{linesegment} \log\circ\psi\circ\lambda\circ\tau(e) = \{0\}\times\left[\frac{j\pi}{m},\frac{(j+1)\pi}{m}\right] \textrm{ for some } 0\leq j \leq 2m-1. \end{equation} 
Denote the vertical line segment in (\ref{linesegment}) by $v_e$. Let $f: v_e \mapsto e$ be a length-multiplying, $C^1$ homeomorphism so that $f^{-1}$ agrees with $\log\circ\psi\circ\lambda\circ\tau$ on the two endpoints of $e$. Consider the maps:
\begin{equation}\label{firstmap} z\mapsto mz \textrm{ for } z\in \left\{\frac{\log2}{m}\right\}\times\left[\frac{j\pi}{m},\frac{(j+1)\pi}{m}\right], \end{equation} 
\begin{equation}\label{secondmap} z\mapsto \log\circ B_n\circ f  \textrm{ for } z\in \{0\}\times\left[\frac{j\pi}{m},\frac{(j+1)\pi}{m}\right]. \end{equation}
For each $1\leq i \leq l$, the proper mappings $B_n$ are orientation-preserving on the unique outer boundary component of $D_i$, and orientation-reserving on all other boundary components of $D_i$. This implies that  we may choose the branch  of $\log$ in (\ref{secondmap}) so that the images of (\ref{firstmap}) and (\ref{secondmap}) are horizontal translates of one another (recall $B_n(e)$ is a circular arc of angle $\pi$), and the derivative of (\ref{secondmap}) is strictly positive for all $z\in v_e$. Since the derivative of (\ref{firstmap}) is also strictly positive, this means the linear interpolation between (\ref{firstmap}) and (\ref{secondmap}) is a homeomorphism.

By (\ref{first_car_conc}), we have that $|B_n'|$ is comparable at all points of $e$ with constant independent of $e$ and $n$. Thus, since $f$ is length-multiplying and $\log$ is length-multiplying on Euclidean circles centered at $0$, we conclude that the derivative of (\ref{secondmap}) is comparable to $m$ at all points of $v_e$ with constant independent of $e$ and $n$. Thus, we conclude that the linear interpolation between (\ref{firstmap}) and (\ref{secondmap}) in the rectangle 
\begin{equation}\label{rectangle} \left[0,\frac{\log2}{m}\right]\times\left[\frac{j\pi}{m},\frac{(j+1)\pi}{m}\right] \end{equation} 
is $K$-quasiconformal with $K$ independent of $e$ and $n$ (see, for instance, Theorem A.1 of \cite{MR4008367}). Denote the linear interpolation by $\phi$ (see Figure \ref{fig: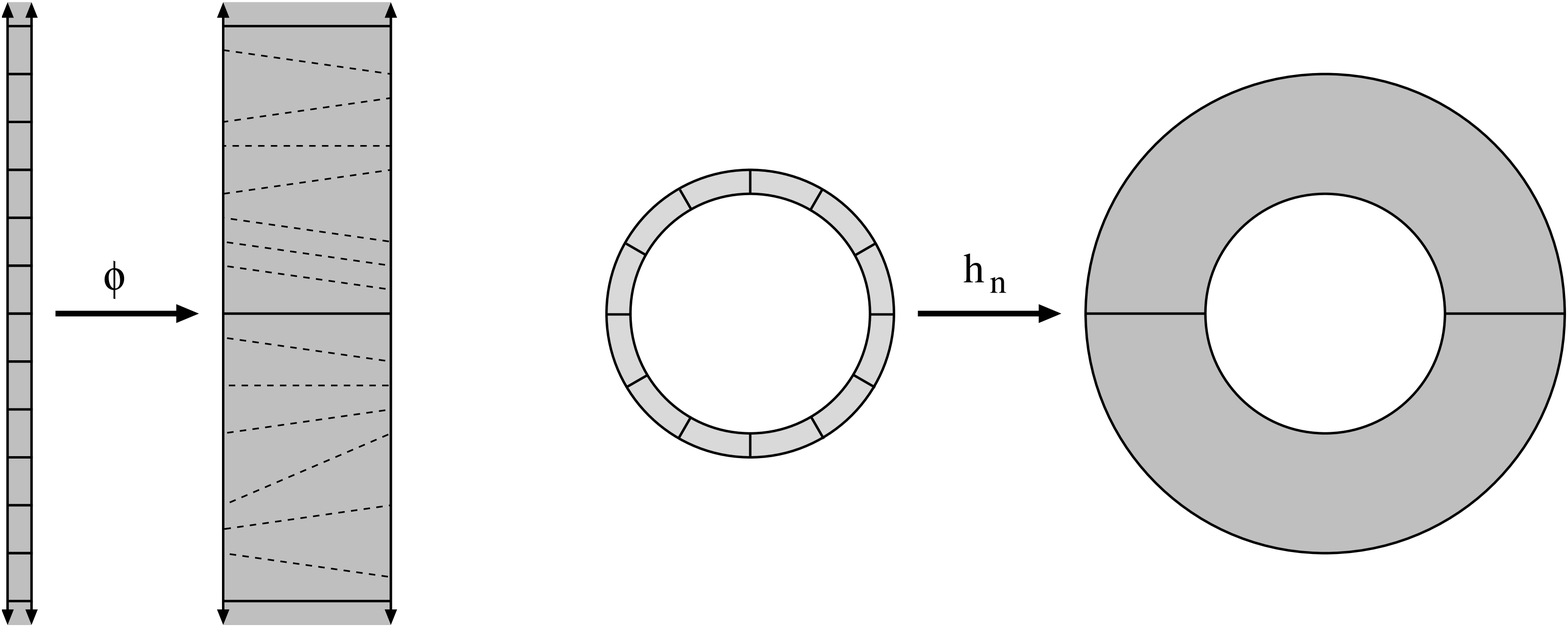}). 

\begin{figure} 
\includegraphics[scale = .2]{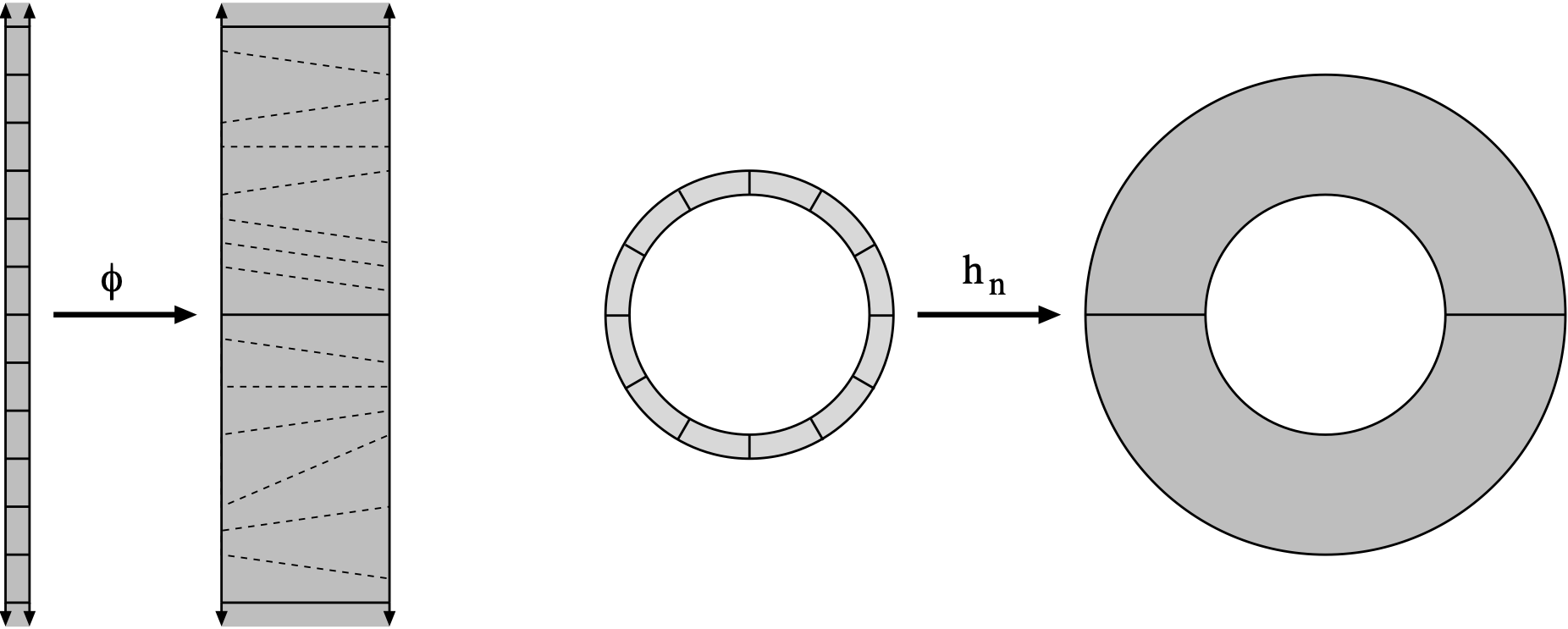}
\caption{Illustrated is the proof of Theorem \ref{zmadjustment}. In logarithmic coordinates the desired interpolation is denoted $\phi$, and $h_n$ is then defined by (\ref{hnfirstdefn}) and (\ref{hnseconddefn}).}
 \label{fig:Defn_Phi.eps}
\end{figure}

 We define
\begin{equation}\label{hnfirstdefn} h_n:=\exp\circ\phi\circ\log \textrm{ in } \{z \in \mathbb{D}^* : z/|z| \in \psi\circ\lambda\circ\tau(e) \}\cap\{|z|\leq\sqrt[\leftroot{-2}\uproot{2}m]{2}\}. \end{equation}
The equation (\ref{hnfirstdefn}) defines $h_n(z)$ for $z$ in $\{ z : 1\leq|z|\leq\sqrt[\leftroot{-2}\uproot{2}m]{2}\}$ and sharing a common angle with the image under $\psi\circ\lambda\circ\tau$ of an edge on some $\partial D_i$. We finish the definition of $h_n$ by simply setting:
\begin{equation}\label{hnseconddefn} h_n(z):=z^m \textrm{ in } \{z \in \mathbb{D}^* : z/|z| \in \psi\circ\lambda\circ\tau(\partial\Omega_n'\setminus\left(\cup_i\partial D_i\right)) \}. \end{equation}
The conclusion (1) now follows by definition of $h_n$, and (3) follows since $h_n$ is a composition of holomorphic mappings and a $K$-quasiconformal interpolation where we have already noted that $K$ is independent of $n$. 

We now show that conclusion (2) follows from Proposition \ref{lengthmultprop}. Fix an edge $e$ on $\partial D_i$. Recall $v_e:=\log\circ\psi\circ\lambda\circ\tau(e)$. Thus, by (\ref{secondmap}) and (\ref{hnfirstdefn}) we have that:
\begin{equation}\label{hncomplicated} h_n\circ\psi\circ\lambda\circ\tau=B_n\circ f\circ\log\circ\psi\circ\lambda\circ\tau \textrm{ on } e. \end{equation}
First note that (\ref{hncomplicated}) agrees set-wise with $B_n$ on $e$ and at the endpoints of $e$. The map $\psi\circ\lambda\circ\tau$ is length-multiplying (by Proposition \ref{first_folding_adjustment}(3) and Theorem \ref{complicated_folding_adjustment}(2)), $\log$ is length-multiplying on the circular segment $\psi\circ\lambda\circ\tau(e)$, and $f$ is length-multiplying by definition. Thus the modulus of the derivative of $f\circ\log\circ\psi\circ\lambda\circ\tau$ is constant on $e$, and so the derivatives of (\ref{hncomplicated}) and $B_n$ have the same modulus at each point of $e$. Conclusion (2) now follows from Proposition \ref{lengthmultprop}. 
\end{proof}




\section{Joining Different Types of Boundary Arcs: the Map $E_n$}\label{extension_section3}

Recall that in Section \ref{extension_section} we defined the maps $\eta_i^\Psi$, $\eta_i$ where $1\leq i \leq \ell$, and in Section \ref{extension_section2} we defined the map $h_n$ for all $n\in\mathbb{N}$. In this section we define a map $E_n$ in $\mathbb{D}^*$ which is roughly given by either $z\mapsto \eta_i^\Psi\circ h_n(z)$ or $z\mapsto \eta_i(z)\circ h_n(z)$, where $i$ is allowed to depend on $\textrm{arg}(z)$ and which of $\eta_i^\Psi$, $ \eta_i$ we post-compose $h_n$ with is also allowed to depend on $\textrm{arg}(z)$. Thus, we will need a way to interpolate between the definitions of $\eta_i^\Psi$, $\eta_i$, for different $i$. The interpolation regions are defined in Definition \ref{marked_edges} below, and the map $E_n$ in Proposition \ref{quadrilateral_defn_extension_prop}. It will be useful to keep Figure \ref{fig:ThreeEdges} in mind for the remainder of this section.

\begin{definition}\label{marked_edges} Mark one edge $e_i$ on $\Gamma_i$ for each $1\leq i \leq\ell-1$. Label the $\ell$ components of $\partial\Omega_n'\setminus\cup_ie_i$ as $(G_i)_{i=1}^\ell$, where $\partial D_i\subset G_i$. Let 
\begin{enumerate} 
\item $\mathcal{J}_i^{D}$ denote those edges in $\partial D_i$,
\item $\mathcal{J}_i^{G}$ denote those edges in $G_i\setminus\mathcal{J}_i^{D}$,
\item $\mathcal{J}^{e}$ denote the edges $(e_i)_{i=1}^{\ell-1}$. 
\end{enumerate} 
In other words, $\mathcal{J}_i^{D}$ are the edges shared by $\partial\Omega_n'$ and $\partial D_i$, $\mathcal{J}^{e}$ consists of $\ell-1$ edges: one on each of the curves $(\Gamma_i)_{i=1}^\ell$, and ${\mathcal J}_i^G$ are the remaining edges on $G_i$. Thus we have: \begin{equation}\nonumber \partial\Omega_n'=\mathcal{J}^{e}  \cup \bigcup_i \left( \mathcal{J}_i^{D} \cup \mathcal{J}_i^{G} \right). \end{equation}  For $z \in \mathbb{D}^*$, we define: 
\begin{equation}\label{definition_of_E} E_n(z):= \begin{cases} 
	 \eta_i^\psi\circ h_n(z) & \textrm{ if } \hspace{2mm} z/|z|\in \psi\circ\lambda\circ\tau(\mathcal{J}_i^D) \\
	\eta_i\circ h_n(z) & \textrm{ if } \hspace{2mm} z/|z|\in \psi\circ\lambda\circ\tau(\mathcal{J}_i^G) \\	 
   \end{cases} \end{equation}
\end{definition}

\begin{figure} 
\includegraphics[scale = .25]{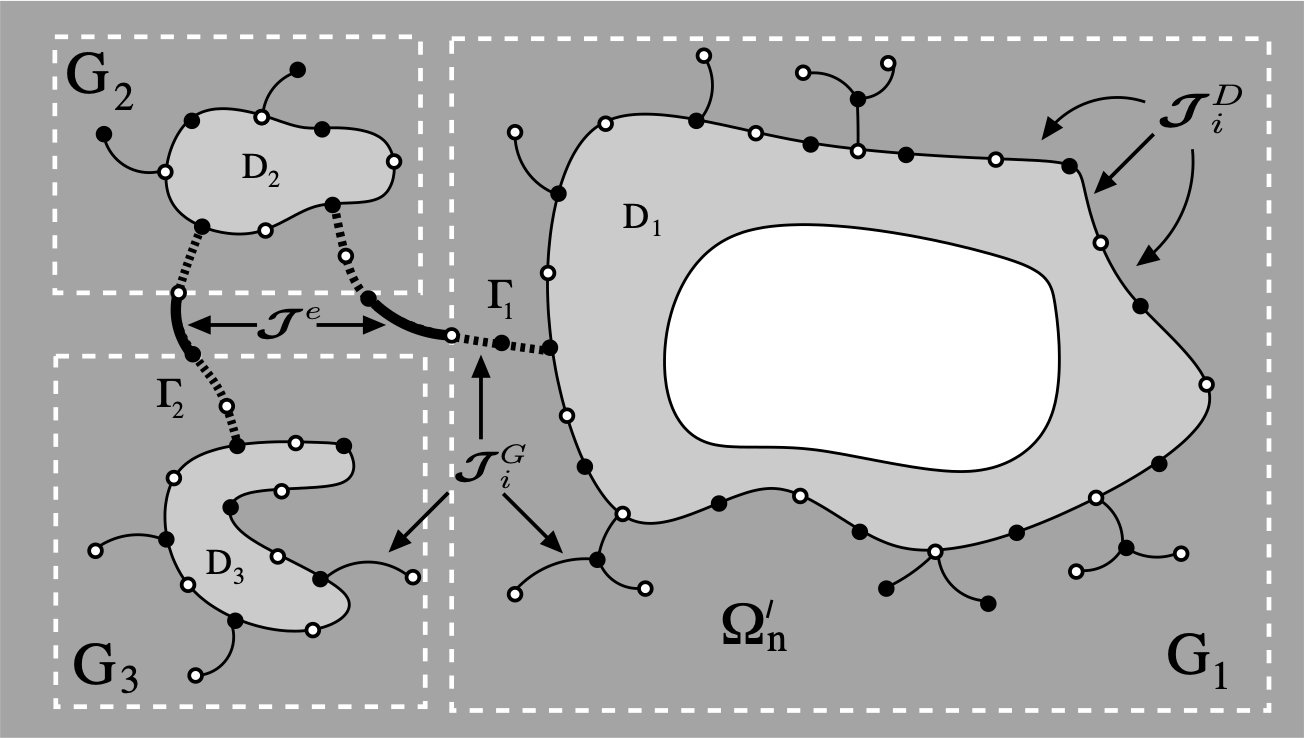}
\caption{Illustrated is Definition \ref{marked_edges}. The curves $\Gamma_1$, $\Gamma_2$ are depicted as black dotted lines, except for the edges $e_1\subset\Gamma_1$, $e_2\subset\Gamma_2$ which are in thick black. }
 \label{fig:ThreeEdges}
\end{figure} 

\noindent It remains to define $E_n(z)$ for $z\in\mathbb{D}^*$ satisfying $z/|z|\in\psi\circ\lambda\circ\tau(\mathcal{J}^e)$. We do so in the following Proposition.

\begin{prop}\label{quadrilateral_defn_extension_prop} The map $E_n$ extends to a locally univalent $K$-quasiregular mapping $E_n:\mathbb{D}^*\rightarrow\mathbb{C}$ satisfying $E_n(z)=z^{m}$ for $|z|\geq \sqrt[\leftroot{-2}\uproot{2}m]{2}$, where $m=m(n)$ is as in Theorem \ref{complicated_folding_adjustment}. Moreover, $K$ does not depend on $n$.
\end{prop}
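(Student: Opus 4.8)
\textbf{Proof plan for Proposition \ref{quadrilateral_defn_extension_prop}.}
The plan is to complete the definition of $E_n$ on the finitely many angular sectors over the marked edges $\mathcal{J}^e$ by a quasiconformal interpolation, and then to check that the resulting global map is locally univalent, $K$-quasiregular with $K$ independent of $n$, and equal to $z^m$ outside $\{|z|<\sqrt[m]{2}\}$. First I would fix an edge $e_i \in \mathcal{J}^e$ lying on $\Gamma_i$, which connects some $D_a$ to $D_b$ with $a,b \leq \ell$; its image $\psi\circ\lambda\circ\tau(e_i)$ is a single edge of $\mathbb{T}\setminus\mathcal{Z}_m$ (by Theorem \ref{complicated_folding_adjustment}(2)), so the corresponding sector $S_i := \{z\in\mathbb{D}^* : z/|z|\in\psi\circ\lambda\circ\tau(e_i)\}$ is bounded by two radial edges which $E_n$ has already been defined on: on one radial side $E_n = \eta_a^\Psi\circ h_n$ or $\eta_a\circ h_n$, and on the other $E_n = \eta_b^\Psi\circ h_n$ or $\eta_b\circ h_n$, according to Definition \ref{marked_edges}. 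On the outer arc $|z|=\sqrt[m]{2}$ all four of $\eta_a^\Psi, \eta_a, \eta_b^\Psi, \eta_b$ are the identity (by \eqref{first_desired}, \eqref{first_eta_i_R}, since $h_n$ maps that arc to $|w|=2$), so the already-defined boundary values of $E_n$ on $\partial(S_i\cap\{|z|<\sqrt[m]{2}\})$ minus the inner arc are a quasisymmetric arc in $\mathbb{C}$; and on the inner arc $|z|=1$ the relations \eqref{second_desired}, \eqref{conjugacy_relation}, \eqref{last_eta_i_R} show the two radial boundary parametrizations match up continuously at the endpoints of that arc (both limits land on $\Psi_a(\pm 1)$-type points, and the two $\eta$'s agree with the same $\eta^\Psi$ on $\mathbb{R}\cap\mathbb{D}^*$), so $h_n$ precomposed with $E_n$'s definitions glues to a well-defined quasisymmetric boundary parametrization of a Jordan curve bounding a quasidisk.

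Having arranged the boundary data, I would invoke the standard quasisymmetric-extension lemma (Proposition 2.30(a) of \cite{MR3445628}, exactly as used in Lemmas \ref{eta_i_psi} and \ref{squished_eta}) to extend $E_n$ over each sector $S_i \cap \{1\leq |z|\leq \sqrt[m]{2}\}$ as a quasiconformal map, and set $E_n(z) = z^m$ for $|z|\geq\sqrt[m]{2}$ there. This produces a globally defined $E_n:\mathbb{D}^*\to\mathbb{C}$ agreeing with $z^m$ on $\{|z|\geq\sqrt[m]{2}\}$. Local univalence: on the sectors over $\mathcal{J}_i^D$, $E_n = \eta_i^\Psi\circ h_n$ is a composition of the locally univalent $h_n$ (Theorem \ref{zmadjustment}) with the homeomorphism $\eta_i^\Psi$, hence locally univalent; similarly over $\mathcal{J}_i^G$; over $\mathcal{J}^e$ the interpolation is a homeomorphism onto its image postcomposed appropriately, so provided the quasisymmetric boundary curve is a Jordan curve (which I verify from the disjointness relation \eqref{third_desired} and the geometry), the extension is a homeomorphism of the sector, hence locally univalent; and the pieces match along the radial edges because the boundary values agree there, so no folding is introduced. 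The $K$-quasiregularity with $K$ independent of $n$ follows by combining: $h_n$ is $K$-quasiregular with $K$ independent of $n$ (Theorem \ref{zmadjustment}(3)), the $\eta_i^\Psi$, $\eta_i$ are quasiconformal with constant independent of $n$ (the Remark after Lemma \ref{squished_eta}), and the interpolations over the $\ell-1$ sectors $S_i$ have dilatation bounded by a constant depending only on the quasisymmetry constants of the boundary data and the shape of the sectors — and those boundary data are built from the $n$-independent maps $\eta_i^\Psi,\eta_i$ composed with $h_n$, with the sectors $S_i$ being images under the $n$-uniformly-quasiconformal maps $\psi\circ\lambda$ of edges whose adjacent edges have comparable length uniformly in $n$ (Proposition \ref{first_folding_adjustment}, Theorem \ref{complicated_folding_adjustment}(2), \eqref{first_car_conc}).

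The main obstacle I anticipate is the matching/continuity bookkeeping at the inner circle $|z|=1$: one must confirm that over the marked edge $e_i$ the two radial boundary values — coming from the $\eta_a$-type map on one side and the $\eta_b$-type map on the other — actually agree at the two endpoints of the inner arc $\psi\circ\lambda\circ\tau(e_i)$, which requires tracking how Theorem \ref{complicated_folding_adjustment}(3) places those endpoints equidistant from $\mathcal{Z}_m^\pm$ and how that interacts with the conjugation relation \eqref{conjugacy_relation} and the orientation bookkeeping of $B_n$ on outer versus inner boundary components of $D_a$, $D_b$ (used in the proof of Theorem \ref{zmadjustment}). Once continuity of the boundary data is established, producing the interpolation and bounding its dilatation uniformly in $n$ is routine given the cited extension lemma and the $n$-independence already recorded for all the constituent maps; I would also remark, as in Lemma \ref{eta_i_psi}, that \eqref{third_desired} is exactly what guarantees the images of the interpolation sectors for different $i$ do not collide, so that $E_n$ is genuinely well-defined on all of $\mathbb{D}^*$.
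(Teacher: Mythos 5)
Your overall strategy is the same as the paper's — fill in $E_n$ over the sectors sitting above the marked edges $e_i$ by a quasiconformal interpolation and glue the pieces — but there is a slip and a genuine gap in the execution.

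First, the slip: $\psi\circ\lambda\circ\tau(e_i)$ is \emph{not} a single edge of $\mathbb{T}\setminus\mathcal{Z}_m$. The arc $\Gamma_i$ is a Jordan arc inside $\Omega$, so $\partial\Omega$ traverses each edge on $\Gamma_i$ from both sides; after $\tau$, the edge $e_i$ therefore corresponds to \emph{two} boundary arcs of $\mathbb{D}^*$, and $\psi\circ\lambda\circ\tau(e_i)$ is a pair of arcs (the paper's $\mathcal{Q}_i^{\pm}$). You must fill in two quadrilaterals per marked edge, and — this matters downstream, in the proof that $g_n$ extends continuously across the $\Gamma_j$ — the two interpolations must be related by the conjugation $g_i^+(z)=g_i^-(\overline{z})$ on $\mathbb{T}\cap\mathbb{H}$, which your outline does not impose.

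Second, and more substantively: the paper does not extend directly from the boundary of the sector as you propose. Instead it builds a fixed ($n$-independent) quadrilateral $Q_i^+$ in the \emph{image} plane, with sides $\eta_i^\Psi([1,2])$, $2\mathbb{T}\cap\mathbb{H}$, $\eta_{i+1}^\Psi([-2,-1])$, and a chosen arc $\gamma_i$ joining $\eta_i^\Psi(1)$ to $\eta_{i+1}^\Psi(-1)$ (whose existence as a non-self-intersecting quadrilateral is exactly what \eqref{third_desired} provides). It then extends a quasisymmetric boundary map to a quasiconformal $g_i^+:Q_i^+\to A(1,2)\cap\mathbb{H}$ and sets $E_n:=(g_i^+)^{-1}\circ(z\mapsto z^m)$ on $\mathcal{Q}_i^+$. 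Since $z\mapsto z^m$ is a conformal map from the $n$-dependent sector onto the fixed half-annulus $A(1,2)\cap\mathbb{H}$ and $g_i^\pm$ is independent of $n$, the dilatation on $\mathcal{Q}_i^\pm$ equals that of $g_i^\pm$ and is trivially $n$-independent. Your version keeps the $n$-dependent sector as the domain and appeals to ``the quasisymmetry constants of the boundary data and the shape of the sectors''; since the sector's opening angle $\pi/m\to 0$, this argument as written does not produce an $n$-independent bound without first normalizing by $z\mapsto z^m$. You also never actually choose the data on the inner arc (the fourth side); the choice of $\gamma_i$ is precisely what supplies it. Finally, you say the pieces match so ``no folding is introduced,'' but to conclude global quasiregularity one still needs removability of the analytic arcs $\partial\Omega_n'$ — the paper invokes this explicitly and it should not be elided.
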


\begin{proof} Consider (\ref{definition_of_E}). Note that if $E_n$ is defined at $z$ and $|z|\geq \sqrt[\leftroot{-2}\uproot{2}m]{2}$, then $E_n(z)=z^m$ by Theorem \ref{zmadjustment}(1) and (\ref{first_desired}), (\ref{first_eta_i_R}). Thus, setting $E_n(z):=z^{m}$ for $|z|\geq \sqrt[\leftroot{-2}\uproot{2}m]{2}$ extends the definition of $E_n$.

 It remains to extend the definition of $E_n$ to: \begin{equation}\label{quadrilaterals} \{ z : 1\leq |z|\leq \sqrt[\leftroot{-2}\uproot{2}m]{2} \textrm{ and } z/|z|\in\psi\circ\lambda\circ\tau(e_i) \}\textrm{, for } 1\leq i \leq\ell-1. \end{equation}
Each of the $\ell-1$ sets in (\ref{quadrilaterals}) consists of $2$ quadrilaterals which we denote by $\mathcal{Q}_i^{\pm}$. The curve $\Gamma_i$ connects two distinct elements of $(D_i)_{i=1}^{\ell-1}$. In order to avoid complicating notation significantly, we will assume without loss of generality that $\Gamma_i$ connects $D_i$ to $D_{i+1}$. Let $\gamma_i\subset 2\mathbb{D}$ be a smooth Jordan arc connecting $\eta_i^\Psi(1)$ to $\eta_{i+1}^\Psi(-1)$ (see Figure \ref{fig: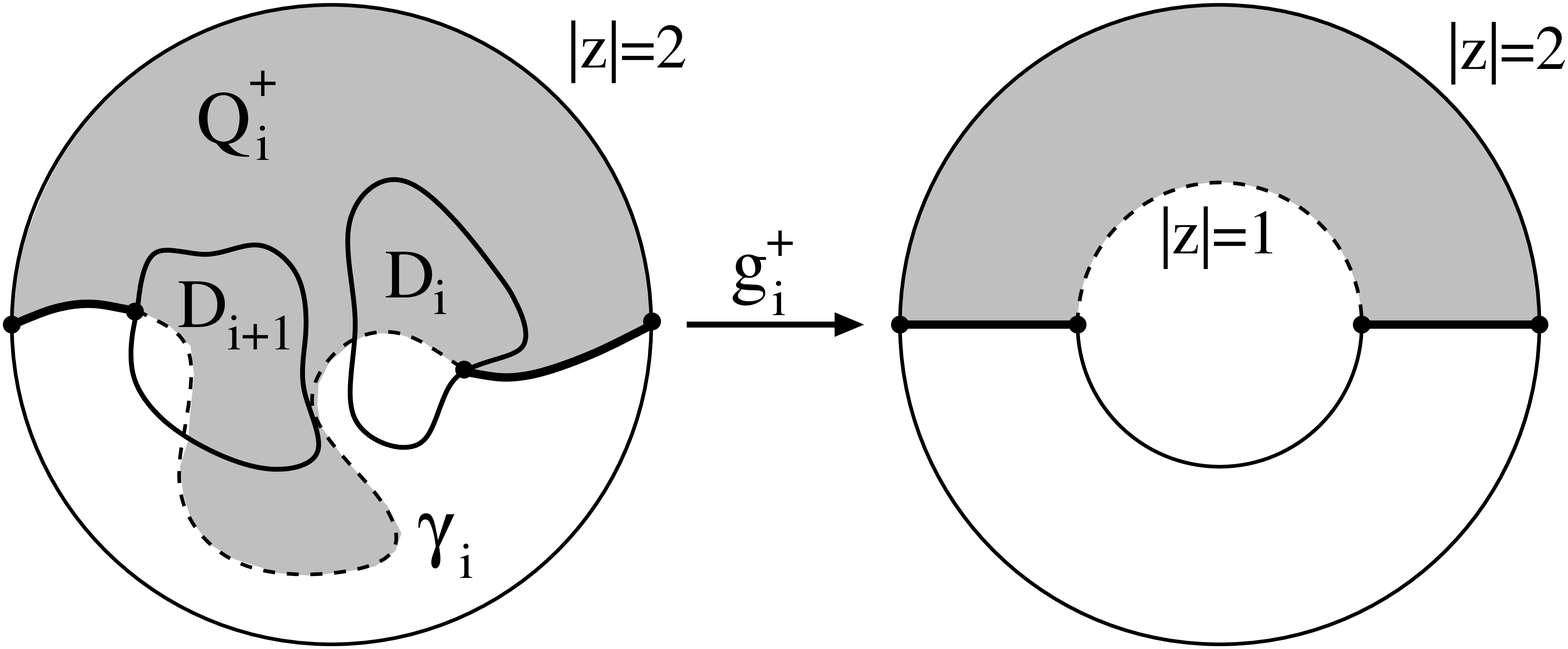}). Moreover, by (\ref{third_desired}), we can choose $\gamma_i$ so that the union of the arcs 
\begin{equation}\label{quad_defn_} \eta_i^\Psi([1,2])\textrm{, }2\mathbb{T}\cap\mathbb{H}\textrm{, } \eta_{i+1}^\Psi([-2,-1])\textrm{, }  \gamma_i \end{equation}
forms a topological quadrilateral we denote by $Q_i^+$ (in particular none of the arcs in (\ref{quad_defn_}) intersect except at common endpoints).

Define a quasisymmetric homeomorphism $g_i^+: \partial Q_i^+ \rightarrow \partial (A(1,2) \cap \mathbb{H})$ (see Figure \ref{fig:Defn_gi.eps}) by 
\begin{equation}\nonumber g_i^+(z)=z \textrm{ for } z\in 2\mathbb{T} \end{equation}
\begin{equation}\nonumber g_i^+(z)=(\eta_i^\Psi)^{-1}(z) \textrm{ for } z\in\eta_i^\Psi([1,2]) \end{equation}
\begin{equation}\nonumber g_i^+(z)=(\eta_{i+1}^\Psi)^{-1}(z) \textrm{ for } z\in\eta_{i+1}^\Psi([-2,-1]), \end{equation} 
and extending $g_i^+$ to a quasisymmetric homeomorphism of $\gamma_i$ to $\mathbb{T}\cap\mathbb{H}$. The mapping $g_i^+$ extends to a quasiconformal homeomorphism $g_i^+: Q_i^+ \rightarrow A(1,2)\cap \mathbb{H}$ (see Lemma 2.24 of \cite{MR3445628}). We define $E_n(z):=(g_i^+)^{-1}(z^m)$ for $z\in \mathcal{Q}_i^+$. A similar definition of $g_i^-: Q_i^- \rightarrow A(1,2) \cap -\mathbb{H}$ is given (using the same curve $\gamma_i$) so that $$g_i^+(z)=g_i^-(\overline{z}) \textrm{ for } z\in\mathbb{T}\cap\mathbb{H}.$$
We let $E_n(z):=(g_i^-)^{-1}(z^m)$ for $z\in \mathcal{Q}_i^-$. 

\begin{figure} 
\includegraphics[scale = .2]{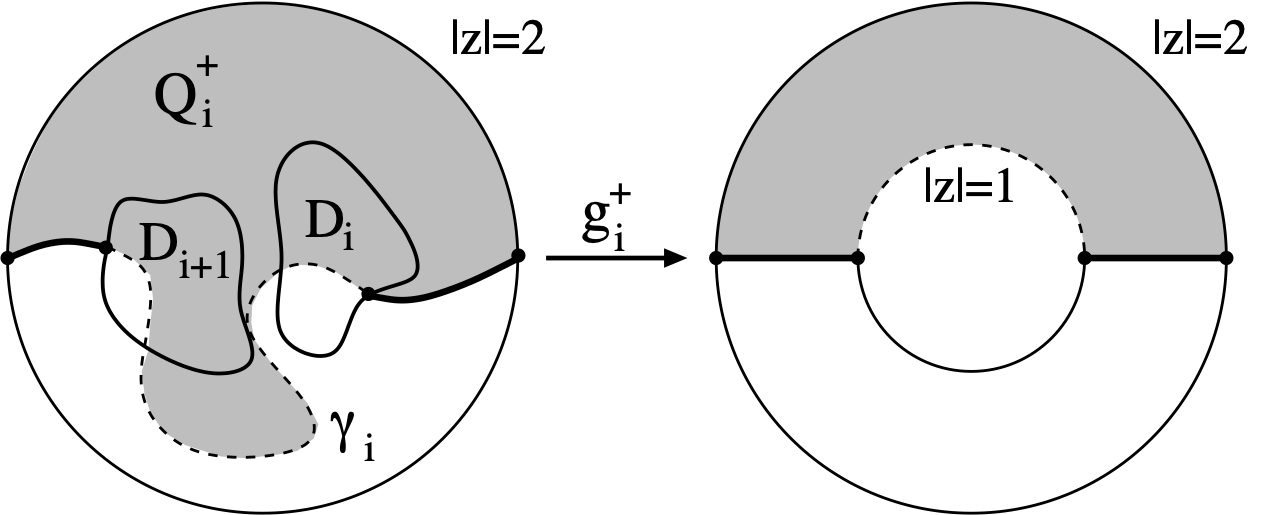}
\caption{Illustrated is the quadrilateral $Q_i^+$ and the map $g_i^+$ in the proof of Proposition \ref{quadrilateral_defn_extension_prop}.}
 \label{fig:Defn_gi.eps}
\end{figure} 

To summarize, we have defined $E_n$ in each of the three regions
\begin{align}\label{finaldefnregion1} \{ z\in\mathbb{D}^* : z/|z| \in \psi\circ\lambda\circ\tau(\mathcal{J}_i^D) \}, \\ 
\label{finaldefnregion2} \{ z\in\mathbb{D}^* : z/|z| \in \psi\circ\lambda\circ\tau(\mathcal{J}_i^G) \},  \\
\label{finaldefnregion3}   \{ z\in\mathbb{D}^* : z/|z| \in \psi\circ\lambda\circ\tau(\mathcal{J}_i^e) \},
\end{align}
Indeed, the definition of $E_n$ in (\ref{finaldefnregion1}) and (\ref{finaldefnregion2}) was given already in (\ref{definition_of_E}), and in this proof we have defined $E_n$ in (\ref{finaldefnregion3}). The definitions of $E_n$ in each of (\ref{finaldefnregion1}), (\ref{finaldefnregion2}), (\ref{finaldefnregion3}) agree along any common boundary, and thus by removability of analytic arcs for quasiregular mappings, it follows that $E_n$ is quasiregular on $\mathbb{D}^*$. Moreover, $E_n$ has no branched points in $\mathbb{D}^*$, and hence $E_n$ is locally quasiconformal. The dilatation of the map $E_n$ depends only on the dilatation of $h_n$ (which is independent of $n$ by Theorem \ref{zmadjustment}(3)) and the dilatations of the the finite collection of quasiconformal maps used in its definition: $\eta_i^\Psi$, $\eta_i$, $g_i^+$, $g_i^-$, and hence we may take $K$ independent of $n$.
\end{proof}

\section{Defining $g_n$ in $\Omega_n'$}\label{extension_section4}


First we recall our setup. We have fixed $\varepsilon>0$, a compact set $K$, disjoint, analytic domains $(D_i)_{i=1}^k$ so that $K\subset U:=\cup_iD_i$, and $f$ holomorphic in a neighborhood of $\overline{U}$ with $||f||_{\overline{U}}<1$. We defined curves $\{\Gamma_i\}_{i=1}^{k-1}$ connecting the domains $(D_i)_{i=1}^k$, and we denoted by $\Omega$ a component of the complement of $\overline{U}\cup\cup_{i=1}^{k-1}\Gamma_i$ with $\tau: \Omega\rightarrow\mathbb{D}^*$ conformal. The domain $\Omega_n'$ is contained in $\Omega$, and $\psi\circ\lambda\circ\tau$ maps $\Omega_n'$ onto $\mathbb{D}^*$. In Section \ref{extension_section3} we defined the map $E_n$.

\begin{definition}\label{g_outside_defn} We define the mapping $g_n: \Omega_n' \rightarrow \Chat$ by 
\begin{equation}\label{g_n_comp_defn}  g_n:=E_{n}\circ\psi\circ\lambda\circ\tau.\end{equation}
\end{definition}


\noindent We will now record at which points the function $g_n|_{\Omega_n'}$ is locally $n:1$ for $n>1$.

\begin{definition} Let $g$ be a quasiregular function, defined in a neighborhood of a point $z\in\mathbb{C}$. We say that $z$ is a \emph{branched point} of $g$ if for any sufficiently small neighborhood $U$ of $z$, the map $g|_U$ is $n:1$ onto its image for $n>1$. We say $w\in\mathbb{C}$ is a \emph{branched value} of $g$ if $w=g(z)$ for a branched point $z$ of $g$. We denote the branched points of a quasiregular mapping $g$ by $\textrm{BP}(g)$, and the branched values by $\textrm{BV}(g)$.
\end{definition}

\begin{rem} Recall that in Notation \ref{omega_def_setup} we fixed a point $p\in\Omega$ satisfying $\tau(p)=\infty$.
\end{rem}

\begin{prop}\label{branched_points_location} The mapping $g_n: \Omega_n' \rightarrow \mathbb{C}$ of Definition \ref{g_outside_defn} is $K$-quasiregular and $C$-vertex supported for $K$, $C$ independent of $n$. Moreover, $g_n^{-1}(\infty)=\{p\}$, \begin{equation}\label{b.p.location} \emph{BP}(g_n) \subset \bigcup_{e\in\partial\Omega_n} \{ z : \dist(z,e) < C\cdot\emph{diam}(e) \}\emph{, and } \end{equation}
\begin{equation} \label{b.v.location.equation} \emph{BV}(g_n) \subset \bigcup_{i=1}^{k} \Psi_i(\mathbb{T}). \end{equation}
\end{prop}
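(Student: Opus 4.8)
The plan is to unwind the definition $g_n = E_n \circ \psi \circ \lambda \circ \tau$ on $\Omega_n'$ and track the four properties (quasiregularity with uniform constant, vertex-support, the location of $\textrm{BP}(g_n)$, and the location of $\textrm{BV}(g_n)$) through the composition. The map $\tau$ is conformal, hence holomorphic and contributes no dilatation and no branching; $\lambda$ is $K$-quasiconformal and $C$-vertex-supported with constants independent of $n$ by Proposition \ref{first_folding_adjustment}; $\psi$ is likewise $K$-quasiconformal and $C$-vertex-supported by Theorem \ref{complicated_folding_adjustment}; and $E_n: \mathbb{D}^* \to \mathbb{C}$ is locally univalent, $K$-quasiregular with $K$ independent of $n$ by Proposition \ref{quadrilateral_defn_extension_prop}. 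Since a composition of $K$-quasiconformal and $K$-quasiregular maps is quasiregular with a dilatation bound depending only on the individual dilatations, and all of these are uniform in $n$, $g_n$ is $K'$-quasiregular with $K'$ independent of $n$. That $g_n^{-1}(\infty) = \{p\}$ is immediate: $E_n(z) = z^m$ for $|z|$ large, so $E_n^{-1}(\infty) = \{\infty\}$, and $\psi \circ \lambda \circ \tau$ sends $p$ to $\infty$ and is a homeomorphism of $\Omega_n'$ onto $\mathbb{D}^*$, so $p$ is the unique preimage.

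For the vertex-support statement, I would combine the vertex-support of $\lambda$ and $\psi$ with the fact that $E_n$ is locally univalent (so $\textrm{supp}((E_n)_{\overline z})$ is the only source of non-holomorphy coming from $E_n$), and then pull everything back under the conformal map $\tau$ and the bi-Lipschitz-on-edge-neighborhoods behavior of $\lambda, \psi$. Concretely: $g_n$ fails to be holomorphic at $z \in \Omega_n'$ only if $\tau(z)$ lies in $\textrm{supp}(\lambda_{\overline z})$, or $\lambda(\tau(z))$ lies in $\textrm{supp}(\psi_{\overline z})$, or $\psi \circ \lambda \circ \tau(z)$ lies in $\textrm{supp}((E_n)_{\overline z})$. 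Each of these supports is contained in a $C$-neighborhood (in the respective annulus $\mathbb{D}^*$ or $\mathbb{H}_r$) of the edge graph, and since the edge graph in $\mathbb{D}^*$ is $V_n = \tau(\mathcal{V}_n)$ and $\tau$ is conformal with the partition having bounded geometry (Proposition \ref{existence_of_curves}(2), Definition \ref{curve_vertices_defn}), pulling back a $C$-neighborhood of an edge $\tau(e)$ gives a $C'$-neighborhood of $e$ with $C'$ independent of $n$. The estimate (\ref{b.p.location}) for $\textrm{BP}(g_n)$ follows the same way: since $\tau$, $\lambda$, $\psi$ are all homeomorphisms (locally univalent), branching of $g_n$ can only come from the power-map part of $E_n$, i.e. $z \mapsto z^m$, whose only branched point in $\mathbb{D}^*$ is at $\infty$ — but wait, $\infty \in \mathbb{D}^*$ and $z \mapsto z^m$ branches there. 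However, $E_n = z^m$ on $|z| \ge \sqrt[m]{2}$, and the preimage $(\psi \circ \lambda \circ \tau)^{-1}(\{|z| \ge \sqrt[m]{2}\})$ — hmm, this needs care. Actually the branched point at $\infty$ maps to $\infty = g_n(p)$, and $p \in \Omega_n$; one checks $p$ lies within $C \cdot \textrm{diam}(e)$ of some edge only if... This is the subtle point, and I would instead argue that the only branching of $E_n$ occurs along the edge-interpolation regions $\mathcal{Q}_i^{\pm}$ and in the power-map core, and that the relevant preimages under $\psi \circ \lambda \circ \tau$ lie in edge-neighborhoods; the branching at $\infty$ is handled by noting $g_n$ near $p$ agrees with $z^m$ composed with a conformal chart, contributing $p$ itself, and whether $p$ satisfies (\ref{b.p.location}) must be ensured by the geometry — I suspect this is actually automatic because every point of $\Omega_n'$ near which $\psi\circ\lambda\circ\tau$ has modulus comparable to that of an edge endpoint lies in an edge-neighborhood, but the clean statement is that $\textrm{BP}(E_n) \subset \bigcup_{e} N_{C\cdot\textrm{diam}(e)}(e)$ in $\mathbb{D}^*$-coordinates together with $\{p\}$, and the problem statement's $\partial\Omega_n$ must include the relevant configuration.

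Let me restructure: the cleanest route is to establish $\textrm{BP}(E_n) \subset \{\infty\} \cup \bigcup_{e \in \partial\Omega_n'} N_{C\,\textrm{diam}(\psi\lambda\tau(e))}(\psi\lambda\tau(e))$ directly from the construction of $E_n$ in Proposition \ref{quadrilateral_defn_extension_prop} — $E_n$ is locally univalent away from where the $z^m$-core meets things, and $z^m$ only branches at $\infty$ — then pull back through the conformal/vertex-supported maps. Since $g_n(p) = \infty$, the point $p$ accounts for the $\infty$ term, and one checks $p$ does lie in the right neighborhood or, more likely, the statement (\ref{b.p.location}) is to be read with the understanding that near $p$ there is in fact no branching because $E_n$ near $\infty$ composed with $\psi\lambda\tau$ is locally a homeomorphism times $z \mapsto z^m$ and — actually $z^m$ does branch. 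I expect the resolution is that $\Omega_n$ is defined so that $p$'s position is controlled, or that the authors intend $\textrm{BP}$ to include only finite branched points handled this way with $\infty$ treated separately; I would follow the paper's own bookkeeping here. Finally, for (\ref{b.v.location.equation}): a branched value is $g_n(z) = E_n(\psi\lambda\tau(z))$ for $z$ a branched point, which by the above lies over an edge, so $\psi\lambda\tau(z) \in \mathbb{T}$ (up to the core at $\infty$), and then $E_n$ restricted to the relevant arc of $\mathbb{T}$ is, by Definition \ref{marked_edges}, either $\eta_i^\Psi \circ h_n$ or $\eta_i \circ h_n$; since $h_n$ maps $\mathbb{T}$ into $\mathbb{T}$ and $\eta_i^\Psi(\mathbb{T}) = \gamma_i' \subset \partial(\Chat \setminus \Psi_i(\mathbb{D}))$... more precisely $\eta_i^\Psi(z) = \Psi_i(z)$ on $|z|=1$ by (\ref{second_desired}), and $\eta_i(z) = \eta_i^\Psi$ on $\mathbb{R}\cap\mathbb{D}^*$ with $\eta_i(z)=\eta_i(\bar z)$ on $|z|=1$ landing in $\Psi_i([-1,1]) \subset \Psi_i(\mathbb{T})$, so in all cases the branched values on these arcs lie in $\bigcup_i \Psi_i(\mathbb{T})$; the branched value $\infty$ from the core is not in any $\Psi_i(\mathbb{T})$, but $g_n: \Omega_n' \to \mathbb{C}$ in the statement has codomain $\mathbb{C}$, and over $\infty$ the map is a local power map so $\infty$ is indeed a branched value — so I must check the statement excludes it or the domain/codomain conventions make $\infty$ not count. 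Reading the statement "$g_n: \Omega_n' \to \mathbb{C}$" literally, $\infty \notin \mathbb{C}$, so the branching at $p$ (mapping to $\infty$) is simply outside the codomain and the finite branched values are exactly those on the arcs, lying in $\bigcup_i \Psi_i(\mathbb{T})$ as required; and (\ref{b.p.location}) must then genuinely contain $p$, which I would verify from the explicit location of $p$ relative to $\partial\Omega_n$ — \textbf{this verification that $p \in \bigcup_{e}N_{C\textrm{diam}(e)}(e)$, or alternatively a re-reading showing $p$ is excluded, is the main obstacle} and the one place I would need to consult the precise definition of $\Omega_n$ and the position of the basepoint $p$; everything else is routine composition-tracking.
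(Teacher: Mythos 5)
Your composition-tracking follows the paper's own proof, and the worry you flag about $p$ has exactly the resolution you tentatively offer: $p$ is a pole, its would-be branched value is $\infty$, and both the proposition's stated codomain $\mathbb{C}$ and the claim $\textrm{BV}(g_n)\subset\bigcup_i\Psi_i(\mathbb{T})\subset\mathbb{C}$ make clear that the convention here (as for $\textrm{CP}$, $\textrm{CV}$ of a rational map, where a pole is not a critical point) is to count only \emph{finite} branched points and values. You should commit to that reading rather than leaving it as "the main obstacle." In fact, since $\tau$ is conformal, $\lambda$ and $\psi$ are quasiconformal homeomorphisms, and $E_n$ is locally univalent on $\mathbb{D}^*\cap\mathbb{C}$ (Proposition~\ref{quadrilateral_defn_extension_prop}), the restricted map $g_n|_{\Omega_n'}$ is locally univalent away from $p$; so after excluding $p$, $\textrm{BP}(g_n)$ is empty and (\ref{b.p.location}), (\ref{b.v.location.equation}) hold vacuously at this stage. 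The paper's phrasing that branched points "are a subset of the vertices of $\partial\Omega_n'$" anticipates the glued extension to $\Chat$ built in Section~\ref{quasiregular_approx}, for which folding across tree edges produces genuine branching, but only at vertices, which lie in the asserted $C\cdot\textrm{diam}(e)$-neighborhoods by Theorem~\ref{complicated_folding_adjustment} and Proposition~\ref{first_folding_adjustment}.

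There is, however, one genuine slip in your sketch of (\ref{b.v.location.equation}): the inclusion "$\Psi_i([-1,1])\subset\Psi_i(\mathbb{T})$" is false, since $\Psi_i$ is a Riemann map of $\mathbb{D}$ and $[-1,1]\not\subset\mathbb{T}$; $\Psi_i([-1,1])$ is an arc crossing the interior $\textrm{int}(\gamma_i)$, not a sub-arc of $\gamma_i$. The correct argument is finer: each vertex of $\partial\Omega_n'$ is sent by $\psi\circ\lambda\circ\tau$ into $\mathcal{Z}_m$ (Theorem~\ref{complicated_folding_adjustment}(2),(3)), and $h_n$ then sends $\mathcal{Z}_m$ into $\{\pm1\}$ (by Theorem~\ref{zmadjustment}(2) for vertices on $\partial D_i$, since $B_n(\mathcal{V}_n)\subset\{\pm1\}$, and by (\ref{hnseconddefn}) for vertices on the trees). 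Because $\eta_i^\Psi(\pm1)=\Psi_i(\pm1)$ by (\ref{second_desired}) and $\eta_i(\pm1)=\eta_i^\Psi(\pm1)$ by (\ref{last_eta_i_R}), every vertex maps into $\bigcup_i\Psi_i(\{\pm1\})\subset\bigcup_i\Psi_i(\mathbb{T})$. One does not (and cannot) argue via $\eta_i$ on all of $\mathbb{T}$.
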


\begin{proof} Since each of the mappings in the composition (\ref{g_n_comp_defn}) are $K$-quasiregular and $C$-vertex supported for $K$, $C$ independent of $n$, the same is true of $g_n$.  The only points where the mapping $g_n$ is locally $l:1$ for $l>1$ are a subset of the vertices of the graph $\partial\Omega_n'$. By Theorem \ref{complicated_folding_adjustment}, the vertices of $\partial\Omega_n'$ all lie in \begin{equation}\nonumber \bigcup_{e\in\partial\Omega_n} \{ z : \dist(z,e) < C\cdot\textrm{diam}(e) \}.\end{equation} Thus, (\ref{b.p.location}) is proven. Moreover, any vertex of $\partial\Omega_n'$ is mapped to a point on one of the curves $\Psi_i(\mathbb{T})$ by $g_n$. Hence, (\ref{b.v.location.equation}) follows since $\textrm{BV}(g_n) = g_n( \textrm{BP}(g_n) )$.
It remains to show: \begin{equation}\label{one_pole} g_n^{-1}(\infty)=\{p\}. \end{equation} Indeed, note that $E_{n}\circ\psi\circ\lambda$ fixes $\infty$ and has no finite poles. The map $\tau: \Omega\rightarrow\mathbb{D}^*$ is conformal and hence only one point $p$ is mapped to $\infty$. The relation (\ref{one_pole}) now follows.

\end{proof}

\noindent It will be useful to record the following result.

\begin{prop} Let $r>1$. Then for all sufficiently large $n$, we have:
\begin{equation}\label{zmoutsidenbhd} g_n(z)=\tau(z)^m \textrm{ for any } z\in\tau^{-1}(\{z: |z|>r\}). \end{equation}
\end{prop}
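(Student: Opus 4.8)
The plan is to trace through the definition $g_n = E_n \circ \psi \circ \lambda \circ \tau$ on $\Omega_n'$ and show that for $z$ with $|\tau(z)|$ bounded away from $1$, none of the three maps $\tau$, $\lambda$, $\psi$ alters $z$ in a way that reaches the ``interpolation collar'' $\{1 \le |w| \le \sqrt[m]{2}\}$ where $E_n$ differs from $w \mapsto w^m$, and that moreover $\psi \circ \lambda$ acts as the identity once we are far enough into $\mathbb{D}^*$. Concretely: fix $r > 1$. First I would observe that $\lambda$ and $\psi$ are both $C$-vertex-supported with $C$ independent of $n$ (Proposition \ref{first_folding_adjustment}(1), Theorem \ref{complicated_folding_adjustment}(1)), and that by Proposition \ref{max_diam_omega} the maximal diameter of an edge of $\partial\Omega_n$ tends to $0$; pushing this through the conformal map $\tau$ (or rather using that the vertices $V_n \subset \mathbb{T}$ become dense and the ``trees'' of the tree domain $W_n$ shrink), one gets that $\mathrm{supp}(\lambda_{\overline z})$ and $\mathrm{supp}(\psi_{\overline z})$, as subsets of $\mathbb{D}^*$, are contained in an annulus $\{1 \le |w| < 1 + \delta_n\}$ with $\delta_n \to 0$. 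Since $\lambda$ and $\psi$ are the identity off their support (Proposition \ref{first_folding_adjustment}(2), Theorem \ref{complicated_folding_adjustment}(1)), for $n$ large enough that $1 + \delta_n < r$ we get $\psi \circ \lambda (w) = w$ for all $|w| \ge r$, $w \in \mathbb{D}^*$. Hence for $z \in \tau^{-1}(\{|w| > r\})$ we have $\psi \circ \lambda \circ \tau(z) = \tau(z)$, and $|\tau(z)| > r > \sqrt[m]{2}$ once $n$ (and hence $m = m(n)$) is large, so by Proposition \ref{quadrilateral_defn_extension_prop}, $E_n(\tau(z)) = \tau(z)^m$. This gives \eqref{zmoutsidenbhd}.

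The one point needing a little care is that the statement has ``for all sufficiently large $n$'' \emph{after} fixing $r$, so I must make sure two things happen for large $n$: (i) $1 + \delta_n < r$, i.e. the combined support of the Beltrami coefficients of $\lambda$ and $\psi$ retreats inside the circle $|w| = r$; and (ii) $\sqrt[m(n)]{2} < r$, i.e. $m(n)$ is large. Item (ii) is immediate once we know $m(n) \to \infty$, which follows because $m(n) = \deg$ of the relevant power map is forced to grow with the degree of $B_n$ (which is $\simeq n$ by \eqref{first_car_conc}); alternatively one only needs $m(n) \ge \log 2 / \log r$, a very weak bound. Item (i) is the substantive step: it requires knowing that the vertex-supported neighborhoods, measured in the $\mathbb{D}^*$ coordinate, shrink toward $\mathbb{T}$. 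I expect this to be the main obstacle, and I would handle it by working in the logarithmic/half-plane coordinates of the proofs of Proposition \ref{first_folding_adjustment} and Theorem \ref{complicated_folding_adjustment}: there $\beta$ and $\Psi_n$ are vertex-supported with respect to a partition of $\partial\mathbb{H}_r$ (resp. $\partial\mathbb{H}$) whose edge lengths are $\simeq 1/n \to 0$ by bounded geometry together with \eqref{first_car_conc} and Definition \ref{curve_vertices_defn}; since a $C$-vertex-supported map has support within distance $C \cdot(\text{edge length})$ of the boundary, the support lies within $O(1/n)$ of $\partial\mathbb{H}_r$, and exponentiating converts this to an annular neighborhood $\{1 \le |w| < 1 + O(1/n)\}$ of $\mathbb{T}$ in $\mathbb{D}^*$.

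Thus the write-up would be: (1) record that $\lambda$ and $\psi$ are identity off their supports; (2) show $\mathrm{supp}(\lambda_{\overline z}) \cup \mathrm{supp}(\psi_{\overline z}) \subset \{1 \le |w| < 1 + \delta_n\}$, $\delta_n \to 0$, via the log-coordinate computation above; (3) conclude $\psi \circ \lambda \circ \tau(z) = \tau(z)$ for $z \in \tau^{-1}(\{|w|>r\})$ once $1 + \delta_n < r$; (4) note $|\tau(z)| > r > \sqrt[m]{2}$ for large $n$ and invoke Proposition \ref{quadrilateral_defn_extension_prop} to get $E_n(\tau(z)) = \tau(z)^m$; (5) combine to obtain \eqref{zmoutsidenbhd}. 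The whole argument is soft once step (2) is in hand, relying only on the vertex-supportedness with $n$-independent constant plus the edge lengths going to zero.
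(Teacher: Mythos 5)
Your proposal is correct and follows essentially the same route as the paper's proof: use the fact that $\lambda$ and $\psi$ are $C$-vertex-supported (with $C$ independent of $n$) and equal to the identity off their supports, combine with Proposition \ref{max_diam_omega} to see that $\psi\circ\lambda$ is the identity on $\{|w|>r\}$ for large $n$, and then invoke $E_n(w)=w^m$ for $|w|\ge\sqrt[m]{2}$ together with $m(n)\to\infty$. Your step (2), locating the supports in a shrinking annulus via the logarithmic coordinates, is just a more explicit rendering of what the paper compresses into its citation of Proposition \ref{max_diam_omega}.
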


\begin{proof} Consider the functional equation (\ref{g_n_comp_defn}) defining $g_n$. The maps $\lambda$, $\psi$ are vertex-supported, and moreover $\lambda$ (respectively, $\psi$) is the identity outside of the support of $\lambda_{\overline{z}}$, (respectively, $\psi_{\overline{z}}$). By Proposition \ref{max_diam_omega}, we therefore have that $\psi\circ\lambda(z)=z$ if $z\in\tau^{-1}(\{z: |z|>r\})$ and $n$ is sufficiently large. The relation (\ref{zmoutsidenbhd}) now follows from (\ref{g_n_comp_defn}) and Theorem \ref{zmadjustment}(1) since $m\rightarrow\infty$ as $n\rightarrow\infty$. 
\end{proof}


\begin{rem}\label{rem_long_notation} As in Remark \ref{dependence_notation}, we note that our Definition \ref{g_outside_defn} of $g_n$ is determined by a choice of the objects $K$, $U$, $\mathcal{D}$, $f$, $\varepsilon$, $\Omega$, $p$ we fixed in Notations \ref{notation_Section4} and \ref{omega_def_setup}. When we wish to emphasize this dependence, we will write $g_n(K, U, \mathcal{D}, f, \varepsilon, \Omega, p)$. In particular, it will be useful in the next section to think of $g_n$ as a function taking as input any choice of  $K$, $U$, $\mathcal{D}$, $f$, $\varepsilon$, $\Omega$, $p$ satisfying the conditions in Notations \ref{notation_Section4}, \ref{omega_def_setup}, and outputting (via Definition \ref{g_outside_defn}) a quasiregular function $g_n(K, U, \mathcal{D}, f, \varepsilon, \Omega, p)$ defined on $\Omega_n'$. 
\end{rem}

\section{Verifying $g_n$ is Quasiregular on $\Chat$}\label{quasiregular_approx}

In this section we combine our efforts in Sections \ref{blaschke_approximation}-\ref{extension_section4} to define an approximant $g_n: \Chat\rightarrow\Chat$ of a given $f$. The approximant $g_n$ will not be holomorphic as required in Theorems \ref{main_theorem} and \ref{rational_runge}, but we will solve this problem in the next section by applying the Measurable Riemann Mapping Theorem. We fix the following for Sections \ref{quasiregular_approx}-\ref{main_proofs_section}.

\begin{notation}\label{last_section_notation} Fix $K$, $f$, $\mathcal{D}$, $\varepsilon$, $P$ as in the statement of Theorem \ref{rational_runge}. Denote by $U$ the neighborhood of $K$ in which $f$ is holomorphic. Define 
\begin{equation} P':=\{ p \in P : p\textrm{ is contained in a component } V \textrm{ of } \Chat\setminus K \textrm{ such that } V\not\subseteq U\}. \end{equation}
Compactness of $K$ implies that $U$ contains all but finitely many components of $\Chat\setminus K$, and so the set $P'$ is finite. Moreover, $P'$ does not depend on $\varepsilon$. By shrinking $U$ if necessary, we may assume that:
\begin{enumerate}
\item $U\cap P'=\emptyset$,
\item $P'$ contains exactly one point in each component of $\Chat\setminus U$,
\item $f$ is holomorphic in a neighborhood of $U\subset\mathcal{D}$, and 
\item the components of $U$ are a finite collection of analytic Jordan domains $(D_i)_{i=1}^k$ so that (\ref{closetoboundaryassumption}) holds for each $D_i$. 
\end{enumerate}
Let $K'$ be a compact set such that $K\subset \textrm{int}(K')\subset K' \subset U$. We will assume for now that $||f||_{\overline{U}}<1$.
\end{notation}

We now define a quasiregular approximation $g_n$ of $f$ by applying the construction of Section \ref{blaschke_approximation} in each $D_i$, and by applying the folding construction of Sections \ref{folding_section}-\ref{extension_section4} in each complementary component of $\cup_i(\overline{D_i}\cup\Gamma_i)$:





\begin{definition}\label{g_final_defn} For every $n$, we define a quasiregular mapping $g_n$ as follows. Recalling Remark \ref{dependence_notation}, we first set 
\begin{equation}\label{definition_of_g_prelim} g_n:= g_n(\varepsilon, K'\cap D_i, D_i, f|_{D_i})  \textrm{ in } D_i \textrm{ for } 1\leq i \leq k \end{equation}
The equation (\ref{definition_of_g_prelim}) defines the curves $(\Gamma_i)_{i=1}^k$ by way of Proposition \ref{existence_of_curves}, and we enumerate the components of 
\begin{equation}\nonumber \Chat\setminus\left( \overline{U} \cup \bigcup_{i=1}^{k-1}\Gamma_i \right) \end{equation}
by $(\Omega(i))_{i=1}^\ell$. Recalling Remark \ref{rem_long_notation} and Notation \ref{modified_domain}, we extend the definition of $g_n$ to the open set
 \begin{equation}\label{boundaries} \Omega:=\Chat\setminus \left( \bigcup_{i=1}^\ell \partial\Omega_n'(i) \right) \end{equation}
 by the formula
 \begin{equation}\label{definition_of_g_notprelim} g_n:= g_n(K', U, \mathcal{D}, f, \varepsilon, \Omega(i), P'\cap\Omega(i)) \textrm{ in } \Omega_n'(i) \textrm{ for } 1\leq i \leq\ell. \end{equation}
\end{definition}

\begin{prop}\label{C_and_K} The quasiregular function $g_n$ is $C$-vertex supported and $K$-quasiregular for $C$, $K$ independent of $n$.
\end{prop}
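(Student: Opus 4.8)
The proof of Proposition \ref{C_and_K} should be a bookkeeping argument: the function $g_n$ defined in Definition \ref{g_final_defn} is assembled by gluing finitely many pieces, each of which has already been shown to be $K$-quasiregular and $C$-vertex-supported with constants independent of $n$, and we need only check that the gluing does not degrade these properties. I would organize it as follows.

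\textbf{The plan.} First I would recall that $g_n$ is defined piecewise: on each $D_i$ it equals $g_n(\varepsilon, K'\cap D_i, D_i, f|_{D_i}) = \Psi_i\circ B_n$, which is holomorphic (hence $1$-quasiregular with empty $\overline{z}$-support), and on each $\Omega_n'(i)$ it equals $g_n(K', U, \mathcal{D}, f, \varepsilon, \Omega(i), P'\cap\Omega(i)) = E_n\circ\psi\circ\lambda\circ\tau$, which by Proposition \ref{branched_points_location} is $K$-quasiregular and $C$-vertex-supported with $K$, $C$ independent of $n$. Since there are only $k$ domains $D_i$ and $\ell$ domains $\Omega_n'(i)$, and $k$, $\ell$ do not depend on $n$, the maximum of the finitely many dilatation bounds and the maximum of the finitely many vertex-support constants are still bounded independently of $n$; take $K$, $C$ to be these maxima.

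\textbf{Key steps.} (1) Identify the (finite, $n$-independent) list of pieces and quote the relevant earlier results giving uniform-in-$n$ bounds for each: holomorphy of $\Psi_i\circ B_n$ on $D_i$, and Proposition \ref{branched_points_location} for the behavior on each $\Omega_n'(i)$. (2) Check consistency of the definitions along the shared analytic boundary arcs, namely along $\partial D_i$: here one invokes Theorem \ref{zmadjustment}(2), which was engineered precisely so that $h_n\circ\psi\circ\lambda\circ\tau = B_n$ on $\partial D_i$, together with relation (\ref{second_desired}) (i.e. $\eta_i^\Psi = \Psi_i$ on $|z|=1$) to see that $E_n\circ\psi\circ\lambda\circ\tau = \Psi_i\circ B_n$ on $\partial D_i$; thus the two formulas agree on the curves separating a $D_i$ from an $\Omega_n'(i)$. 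One must also note the pieces cover all of $\Chat$ up to the graph $\cup_i\partial\Omega_n'(i)$, so $g_n$ is defined a.e. (3) Invoke removability of analytic arcs for quasiregular mappings to conclude that the glued map $g_n$ is globally $K$-quasiregular on $\Chat$ — this is the same removability argument used at the end of the proof of Proposition \ref{quadrilateral_defn_extension_prop}. (4) For the vertex-support claim, observe that $\mathrm{supp}((g_n)_{\overline z})$ is contained in the union of the supports over the finitely many pieces, each contained in the prescribed $C\cdot\mathrm{diam}(e)$-neighborhoods of the edges of the corresponding boundary graph; since the edge sets of all the $\partial\Omega_n'(i)$ together with $\partial D_i$ form the global graph, the union is a vertex-supported neighborhood with the same (maximal) constant $C$.

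\textbf{Main obstacle.} The only genuinely substantive point is step (2): verifying that the separately-constructed pieces actually match up continuously along $\partial D_i$ (and that no gap or overlap region has been left undefined near where the curves $\Gamma_i$ meet $\partial U$). This is not a computation so much as a matter of carefully tracing through the definitions — $E_n$ on the $\mathcal{J}_i^D$-sectors is $\eta_i^\Psi\circ h_n$, and one must confirm this restricts on $\partial D_i$ to $\Psi_i\circ B_n$ via Theorem \ref{zmadjustment}(2) and (\ref{second_desired}) — but since each ingredient was built with exactly this compatibility in mind, it goes through. Everything else (the finiteness of the piece count, taking maxima of finitely many $n$-independent constants, quoting removability) is routine.
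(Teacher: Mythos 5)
Your core argument (your steps 1 and 4: cite Proposition \ref{branched_points_location} for each $\Omega_n'(i)$, note that $g_n$ is holomorphic on $U$, and take the maximum of finitely many $n$-independent constants) is exactly the paper's proof, which is two lines long. However, your steps 2 and 3 — verifying that the piecewise formulas agree along the boundary arcs and then invoking removability of analytic arcs — are not part of the proof of this proposition as the paper organizes it. At the point where Proposition \ref{C_and_K} appears, $g_n$ is defined only on the open set $\Omega:=\Chat\setminus\bigcup_{i=1}^\ell\partial\Omega_n'(i)$ of (\ref{boundaries}), and the proposition asserts bounded dilatation and vertex-support for $g_n$ on that open set; the extension across $\partial\Omega$ is deferred to Propositions \ref{edgesonD}, \ref{edgesonO}, and Corollary \ref{acrossedgescor}. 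So what you identify as ``the main obstacle'' is in fact the content of those subsequent results, not of \ref{C_and_K}. Also, your step 2 only treats the edges lying on $\partial D_i$ (via Theorem \ref{zmadjustment}(2) and (\ref{second_desired})); the edges of $\partial\Omega_n'(i)$ interior to $\Omega(i)$ (the tree decorations) require a separate matching argument using Theorem \ref{complicated_folding_adjustment}(3) and relation (\ref{conjugacy_relation}), which is the content of Proposition \ref{edgesonO}. None of this is a gap in your reasoning — you have simply merged this proposition with the three that follow it.
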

\begin{proof} For $g_n|_{\Omega_n'(i)}$ this is exactly Proposition \ref{branched_points_location}, and so the conclusion follows since $g_n$ is holomorphic in $U$.
\end{proof}


The function $g_n$ is now defined on all of $\Chat$ except for the edges of each $\partial\Omega_n'(i)$. We show in Propositions \ref{edgesonD}, \ref{edgesonO} below that $g_n$ in fact extends continuously across each edge of $\partial\Omega_n'(i)$, and deduce in Corollary \ref{acrossedgescor} that $g_n$ extends quasiregularly across $\partial\Omega$. 

\begin{prop}\label{edgesonD} The $K$-quasiregular function $g_n: \Omega \rightarrow \Chat$ extends to a continuous function $g_n: \Omega\cup e \rightarrow \Chat$ for any edge $e\subset\partial\Omega\cap\partial U$.
\end{prop}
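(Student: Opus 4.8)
The plan is to show that $g_n$ extends continuously across $e$ by checking that the two definitions of $g_n$ on the two sides of $e$ agree there. On the side of $e$ lying in some $D_i$, the map $g_n$ is given by Definition \ref{quasiregular_disc_definition}, i.e. $g_n = \Psi_i\circ B_n$ (using the notation of Section \ref{blaschke_approximation} applied to $(\varepsilon, K'\cap D_i, D_i, f|_{D_i})$), which extends continuously to $\overline{D_i}$ since $B_n$ is proper on the analytic domain $D_i$ and $\Psi_i$ is a homeomorphism of $\overline{\mathbb D}$ onto $\overline{\textrm{int}(\gamma_i)}$; in particular, on $e\subset\partial D_i$ we have $g_n(z)=\Psi_i(B_n(z))$ with $B_n(e)$ a half-circle in $\mathbb T$, so $g_n(e)$ is the arc $\Psi_i(B_n(e))\subset\Psi_i(\mathbb T)=\gamma_i$. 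On the side of $e$ lying in a complementary component $\Omega_n'(i)$, the map $g_n$ is given by Definition \ref{g_outside_defn}, i.e. $g_n = E_n\circ\psi\circ\lambda\circ\tau$.

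First I would identify $e$ with an edge of $\partial\Omega_n'$ of the type $\mathcal J_i^D$ from Definition \ref{marked_edges} (edges shared by $\partial\Omega_n'$ and $\partial D_i$), so that the relevant branch of $E_n$ there is $\eta_i^\Psi\circ h_n$ by the first case of (\ref{definition_of_E}). Then on $e$ we have
\begin{equation}\nonumber g_n = \eta_i^\Psi\circ h_n\circ\psi\circ\lambda\circ\tau \quad\text{on } e.\end{equation}
Now $h_n\circ\psi\circ\lambda\circ\tau(e)\subset\mathbb T$ by Theorem \ref{complicated_folding_adjustment}(2) and the fact that $h_n$ maps $\mathbb T$ into $\mathbb T$ (it is $z\mapsto z^m$ near $\mathbb T$ up to the quasiconformal adjustment, which is length-multiplying on $\mathbb T$), and on $\mathbb T$ we have $\eta_i^\Psi(z)=\Psi_i(z)$ by (\ref{second_desired}). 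Hence on $e$, $g_n = \Psi_i\circ(h_n\circ\psi\circ\lambda\circ\tau)$, and the point is that $h_n\circ\psi\circ\lambda\circ\tau = B_n$ on $\partial D_i$ by Theorem \ref{zmadjustment}(2). Therefore both definitions of $g_n$ restrict to $\Psi_i\circ B_n$ on $e$, they agree, and so $g_n$ extends continuously across $e$. (Boundary values are attained continuously from each side because each constituent map — $B_n$ on the analytic domain $D_i$, the conformal maps $\tau$, the quasiconformal maps $\lambda,\psi,\eta_i^\Psi$, and $h_n$ — extends continuously to the relevant boundary arc.)

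I expect the main obstacle to be bookkeeping rather than any deep difficulty: one must carefully match the edge $e$ across the identification of $\partial\Omega_n'$ with $\partial\mathbb D^*$ via $\psi\circ\lambda\circ\tau$, confirm that $e$ is indeed of type $\mathcal J_i^D$ and not $\mathcal J_i^G$ or $\mathcal J^e$ (this is where the hypothesis $e\subset\partial\Omega\cap\partial U$ is used — such an edge lies on some $\partial D_i$), and track the several auxiliary maps and their boundary extensions so that the string of equalities $g_n|_{D_i\text{-side}} = \Psi_i\circ B_n = g_n|_{\Omega_n'(i)\text{-side}}$ on $e$ is justified at each step. There is also a minor subtlety that $\Omega$ in the statement refers to the set (\ref{boundaries}), so "an edge $e$ of $\partial\Omega$" should be read as an edge of some $\partial\Omega_n'(i)$; restricted to $\partial U$, this is an edge of $\partial D_i$ in the sense of Definition \ref{vertex_disc_definition}, and the argument above applies verbatim on each such edge.
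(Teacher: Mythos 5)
Your proposal is correct and follows essentially the same route as the paper's proof: identify the two one-sided definitions of $g_n$ along $e$, use Theorem \ref{zmadjustment}(2) to get $h_n\circ\psi\circ\lambda\circ\tau=B_n$ on $e$, and use (\ref{second_desired}) together with the first case of (\ref{definition_of_E}) to conclude both sides restrict to $\Psi_i\circ B_n$. The extra bookkeeping you flag (that $e$ is of type $\mathcal{J}_i^D$ and that each constituent map extends continuously to the boundary arc) is correct and is left implicit in the paper.
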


\begin{proof} Let $i$ be so that $e\subset\partial D_i$ and denote the unique element of $(\Omega(i))_{i=1}^\ell$ that contains $e$ on its boundary by $\Omega(j)$. Recall by Definitions \ref{quasiregular_disc_definition} and \ref{g_outside_defn} that 
\begin{equation}\label{recallgnDdefn} g_n|_{D_i}=\Psi_i\circ B_n, \end{equation} 
\begin{equation}\label{recalloutsidedefn} {g_n}|_{\Omega_n'(j)} = E_n\circ\psi\circ\lambda\circ\tau. \end{equation}
By Theorem \ref{zmadjustment}(2) we have
\begin{equation}\nonumber h_n\circ\psi\circ\lambda\circ\tau=B_n \textrm{ on } e. \end{equation}
By (\ref{second_desired}) and the definition (\ref{definition_of_E}) of $E_n$, it follows from (\ref{recalloutsidedefn}) that 
\begin{equation}\nonumber {g_n}|_{\Omega_n'(j)}(z)=\Psi_i\circ B_n(z) \textrm{ for } z\in e, \end{equation}
in other words ${g_n}|_{\Omega_n'(j)}$ and $g_n|_{D_i}$ agree pointwise on $e$.
\end{proof}

\begin{prop}\label{edgesonO} The $K$-quasiregular function $g_n: \Omega \rightarrow \Chat$ extends to a continuous function $g_n: \Omega\cup e \rightarrow \Chat$ for any edge $e\subset\partial\Omega\cap(\cup_{i=1}^\ell\Omega(i))$.
\end{prop}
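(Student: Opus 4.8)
The strategy mirrors the proof of Proposition~\ref{edgesonD}, but now the relevant edge $e$ lies on one of the curves $\Gamma_i$ (or on a boundary component $\partial D_i$ that is \emph{not} shared with $\partial\Omega$ in the way exploited before), so $e$ borders the \emph{same} domain $\Omega(j)$ on both sides—or borders two distinct domains $\Omega(j)$, $\Omega(j')$ among the complementary components of $\overline{U}\cup\bigcup\Gamma_i$. In either case the point is to compare the two one-sided boundary values of $g_n$ coming from (\ref{definition_of_g_notprelim}) applied on the two incident modified domains $\Omega_n'(j)$ and $\Omega_n'(j')$ (with the understanding that when $e\subset\Gamma_i$ the two sides may carry different labels in Definition~\ref{marked_edges}), and to check they agree pointwise on $e$.

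First I would unwind the definition: on each incident domain, $g_n|_{\Omega_n'(\cdot)} = E_n\circ\psi\circ\lambda\circ\tau$ by (\ref{recalloutsidedefn}), where $\tau$, $\lambda$, $\psi$, $E_n$ are the maps built in Sections~\ref{folding_section}--\ref{extension_section3} for that particular domain $\Omega(\cdot)$. So on $e$ the boundary value is determined by the boundary behavior of $E_n$ on the appropriate arc of $\mathbb{T}$, namely on $\psi\circ\lambda\circ\tau(e)$. Now I split into the cases of Definition~\ref{marked_edges}: if $e\in\mathcal{J}_i^G$ (a non-$\partial D_i$ edge of some $G_i$), then on the $\mathbb{T}$-side $E_n = \eta_i\circ h_n$, and the key conjugacy relation (\ref{conjugacy_relation}), $\eta_i(z)=\eta_i(\overline z)$ for $|z|=1$, together with Theorem~\ref{complicated_folding_adjustment}(3) (the two one-sided limits of $\psi$ at a point of $\partial W_n\cap\mathbb{D}^*$ are reflections of each other across $\mathbb{R}$, being equidistant from $\mathcal{Z}_m^{\pm}$), forces the two one-sided values of $g_n$ to coincide. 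If instead $e\in\mathcal{J}^e$ (a marked edge $e_i\subset\Gamma_i$), then $g_n$ on the two incident quadrilaterals is $(g_i^{\pm})^{-1}(z^m)$, and the defining relation $g_i^+(z)=g_i^-(\overline z)$ for $z\in\mathbb{T}\cap\mathbb{H}$ from the proof of Proposition~\ref{quadrilateral_defn_extension_prop}, again combined with Theorem~\ref{complicated_folding_adjustment}(3), gives matching one-sided boundary values. The remaining subcase $e\subset\partial D_i\cap\partial\Omega$ that somehow was not covered by Proposition~\ref{edgesonD} does not arise here by the hypothesis $e\subset\Omega(i)$-interior-side, so I need not revisit it.

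I would then note continuity: each of $\tau,\lambda,\psi,E_n$ extends continuously to the relevant boundary arcs (the first is conformal onto $\mathbb{D}^*$ with analytic boundary behavior, $\lambda$ and $\psi$ are quasiconformal and act as the identity off their vertex-supported dilatation, $E_n$ is built from continuous pieces matching along analytic arcs), so the composition has well-defined one-sided limits on $e$; having shown these limits agree, $g_n$ extends continuously across $e$. Finally I would record that this continuous extension is automatically $K$-quasiregular in a neighborhood of the (analytic) arc $e$ by removability of analytic arcs for quasiregular maps—this last point is really the content of Corollary~\ref{acrossedgescor} and I would just reference it rather than reprove it.

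The main obstacle is bookkeeping: correctly identifying, for a given edge $e$, which domain(s) $\Omega(j)$ are incident, which of the alternatives in (\ref{definition_of_E}) or which quadrilateral $\mathcal{Q}_i^\pm$ governs each side, and then invoking the \emph{right} symmetry relation—(\ref{conjugacy_relation}) versus $g_i^+(z)=g_i^-(\overline z)$—paired with Theorem~\ref{complicated_folding_adjustment}(3). The analytic content is light; the care lies entirely in matching up the reflection symmetry of the folding map $\psi$ across $\partial W_n\cap\mathbb{D}^*$ with the reflection symmetry built into $\eta_i$ and the $g_i^\pm$, so that the two sides of $e$ produce literally the same map onto $\Psi_i([-1,1])$ (or onto the appropriate arc), not merely the same image set.
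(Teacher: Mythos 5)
Your core argument for the $\mathcal{J}_i^G$ case is exactly the paper's proof: the two one\mbox{-}sided limits $\zeta_\pm$ of $\psi\circ\lambda\circ\tau$ at a point $x\in e$ satisfy $\zeta_+^m=\overline{\zeta_-^m}$ by Theorem~\ref{complicated_folding_adjustment}(3), so (using that $h_n$ is $z\mapsto z^m$ on the relevant arcs, and then the symmetry $\eta_i(z)=\eta_i(\overline z)$ from (\ref{conjugacy_relation})) the two one\mbox{-}sided limits of $E_n\circ\psi\circ\lambda\circ\tau$ coincide, giving a continuous extension across $e$.

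However, you misidentify which edges Proposition~\ref{edgesonO} is about. The hypothesis $e\subset\partial\Omega\cap\bigl(\bigcup_{i=1}^\ell\Omega(i)\bigr)$ places $e$ in the \emph{interior} of one of the complementary components $\Omega(j)$. Since the $\Omega(j)$ are, by construction, components of $\Chat\setminus\bigl(\overline U\cup\bigcup_i\Gamma_i\bigr)$, every $\Gamma_i$ is disjoint from $\bigcup_j\Omega(j)$; and $\partial D_i\subset\partial U$ is likewise disjoint from $\bigcup_j\Omega(j)$. So $e$ never lies on a curve $\Gamma_i$ and never lies on a $\partial D_i$: it is necessarily one of the tree decoration edges removed from $\Omega(j)$ to form $\Omega_n'(j)$ (Notation~\ref{modified_domain}), lying in $\partial\Omega_n'(j)$ for a \emph{single} $j$ and approached on \emph{both} sides from that same $\Omega_n'(j)$. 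The scenario of two distinct incident domains $\Omega(j)$, $\Omega(j')$ does not occur, and the $\mathcal{J}^e$ case (and the $g_i^\pm$ symmetry) is outside the scope of this proposition. Those extra cases are not wrong, but they are irrelevant here; the proposition reduces entirely to the single $\mathcal{J}_i^G$ argument you gave, which is the paper's.
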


\begin{proof} Let $j$ be so that $e\subset \partial\Omega_n'(j)$, and as in the proof of Proposition \ref{edgesonD}, recall that 
\begin{equation}\label{recalloutsidedefn2} {g_n}|_{\Omega_n'(j)} = E_n\circ\psi\circ\lambda\circ\tau. \end{equation}
Let $x\in e$. There are two limits 
\begin{equation}\nonumber \lim_{\Omega_n'(j)\ni z\rightarrow x}\psi\circ\lambda\circ\tau(z), \end{equation} each lying on the unit circle.
Denote them by $\zeta_\pm$. By Theorem \ref{complicated_folding_adjustment}(3), \begin{equation}\nonumber \zeta_+^m=\overline{\zeta_-^m}. \end{equation} Thus, by (\ref{conjugacy_relation}) and (\ref{definition_of_E}), we conclude that there is a unique limit 
\begin{equation}\nonumber \lim_{\Omega_n'(j)\ni z\rightarrow x}E_n\circ\psi\circ\lambda\circ\tau(z). \end{equation} 
Hence, setting 
\begin{equation}\nonumber g_n(x):=\lim_{\Omega_n'(j)\ni z\rightarrow x}E_n\circ\psi\circ\lambda\circ\tau(z) \end{equation} 
defines a continuous extension of $g_n$ across the edge $e$.
\end{proof}

\begin{cor}\label{acrossedgescor} The $K$-quasiregular function $g_n: \Omega \rightarrow \Chat$ extends to a $K$-quasiregular function $g_n: \Chat\rightarrow\Chat$.
\end{cor}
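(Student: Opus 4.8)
The plan is to combine Propositions \ref{edgesonD} and \ref{edgesonO} with a removability argument. We have already established that $g_n$ is defined and $K$-quasiregular on the open set $\Omega=\Chat\setminus\left(\cup_{i=1}^\ell\partial\Omega_n'(i)\right)$, and that it is $C$-vertex supported (Proposition \ref{C_and_K}). The complement $\Chat\setminus\Omega$ is precisely the union of the edges of the graphs $\partial\Omega_n'(i)$, together with their vertices. Each such edge is an analytic arc by Definition \ref{analytic_domain_definition}, Proposition \ref{existence_of_curves}(4), and the construction of the vertices along the $\Gamma_i$ (Definition \ref{curve_vertices_defn}); the vertices form a finite set.

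First I would use Propositions \ref{edgesonD} and \ref{edgesonO} to extend $g_n$ continuously across the (relative) interior of each edge $e$ of each $\partial\Omega_n'(i)$: if $e\subset\partial U$ this is Proposition \ref{edgesonD}, and if $e$ lies on one of the $\Omega(i)$ it is Proposition \ref{edgesonO}. Since each edge $e$ has two sides, both lying in $\Omega$ (one side is some $\Omega_n'(j)$ or $D_i$, the other is an adjacent such domain), and the continuous extensions from the two sides agree on $e$ — the values on $e$ are determined by $\Psi_i\circ B_n$ in the case of Proposition \ref{edgesonD}, and by the common limit of $E_n\circ\psi\circ\lambda\circ\tau$ in the case of Proposition \ref{edgesonO} — the map $g_n$ extends to a continuous map on $\Omega$ together with the relative interiors of all edges. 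This leaves only the finitely many vertices undefined; extend $g_n$ to these by continuity (a single omitted point relative to which $g_n$ is continuous and bounded near is automatically filled in, since $g_n$ is continuous on a punctured neighborhood with a removable point).

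Next I would invoke removability of analytic arcs for quasiregular mappings: a continuous map on a domain which is $K$-quasiregular off a locally finite union of analytic arcs (equivalently, off a set of $\sigma$-finite length, or off an analytic curve) is $K$-quasiregular on the whole domain. This is a standard fact (it follows from the analogous removability statement for quasiconformal maps together with the Stoilow factorization, or directly from the fact that such arcs have zero area and are removable for $W^{1,2}_{\mathrm{loc}}$ together with the measurable-coefficient characterization of quasiregularity); it was already used in the proof of Proposition \ref{quadrilateral_defn_extension_prop}. Applying this with the locally finite (indeed finite) collection of analytic edges of the graphs $\partial\Omega_n'(i)$, we conclude that the continuous extension $g_n:\Chat\rightarrow\Chat$ is $K$-quasiregular, with the same dilatation bound $K$ (independent of $n$) as on $\Omega$.

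I do not expect a serious obstacle here: the two hard continuity computations have already been carried out in Propositions \ref{edgesonD} and \ref{edgesonO}, and the only remaining content is the bookkeeping that the edges form a finite collection of analytic arcs plus the citation of arc-removability for quasiregular maps. The one point requiring a word of care is the vertices: one should note that near each vertex $g_n$ is continuous and the vertex is a single point, hence removable (no area, isolated), so the continuous extension across edges automatically extends continuously across vertices as well, and then arc-removability (or just the fact that a finite set is removable for quasiregularity of a continuous map) finishes the argument.

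\begin{proof} By Propositions \ref{edgesonD} and \ref{edgesonO}, the $K$-quasiregular map $g_n:\Omega\rightarrow\Chat$ extends continuously across the relative interior of every edge $e$ of each graph $\partial\Omega_n'(i)$, $1\leq i\leq\ell$; moreover on each such $e$ the extensions from the two complementary components of $\Omega$ abutting $e$ agree, so these extensions patch together to a continuous map defined on $\Omega$ together with the relative interiors of all edges. The only points of $\Chat$ not yet covered are the finitely many vertices of the graphs $\partial\Omega_n'(i)$; since $g_n$ is continuous and finite-valued on a punctured neighborhood of each such vertex, it extends continuously there as well. We thus obtain a continuous map $g_n:\Chat\rightarrow\Chat$.

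By Definition \ref{analytic_domain_definition}, Proposition \ref{existence_of_curves}(4), and Definition \ref{curve_vertices_defn}, each edge of each $\partial\Omega_n'(i)$ is an analytic Jordan arc, and there are finitely many of them. The set $\Chat\setminus\Omega$ is the union of these edges and their vertices, hence a finite union of analytic arcs. Since $g_n$ is continuous on $\Chat$ and $K$-quasiregular on $\Omega=\Chat\setminus\left(\bigcup_{i=1}^\ell\partial\Omega_n'(i)\right)$, removability of analytic arcs for quasiregular mappings (used already in the proof of Proposition \ref{quadrilateral_defn_extension_prop}) implies that $g_n$ is $K$-quasiregular on all of $\Chat$, with the same constant $K$, independent of $n$ by Proposition \ref{C_and_K}.
\end{proof}
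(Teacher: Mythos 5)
Your proposal is correct and follows essentially the same route as the paper: invoke Propositions \ref{edgesonD} and \ref{edgesonO} for continuity across the two types of edges, then apply removability of analytic arcs for quasiregular mappings (the extra care you take with the finitely many vertices is harmless and implicit in the paper's argument).
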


\begin{proof} The set $\Chat\setminus\Omega=\partial\Omega$ consists of a finite collection of analytic arcs: the edges of the graphs $\partial\Omega_n'(i)$ over $1\leq i \leq\ell$. Thus, by removability of analytic arcs for quasiregular mappings, it suffices to show that $g_n: \Omega \rightarrow \Chat$ extends continuously across each such edge. There are two types of edges to check: those that lie on the boundary of a domain $D_i$, and those that lie in the interior of a domain $\Omega(i)$. We have already checked continuity across both types of edges in Propositions \ref{edgesonD}, \ref{edgesonO}, and so the proof is complete.
\end{proof}

\section{Proof of the Main Theorems}\label{main_proofs_section}

In Section \ref{main_proofs_section} we prove Theorems \ref{main_theorem} and \ref{rational_runge}. Recall that in Section \ref{quasiregular_approx} we fixed the objects  $K$, $f$, $\mathcal{D}$, $\varepsilon$, $P$ as in Theorem \ref{rational_runge} (see Notation \ref{last_section_notation}), and we defined a quasiregular approximation $g_n$ to $f$ in Definition \ref{g_final_defn}. We also showed in Section \ref{quasiregular_approx} that $g_n$ in fact extends to a quasiregular function $g_n: \Chat\rightarrow\Chat$. Now we apply the MRMT below in Definition \ref{polynomial_defn} to obtain the rational maps $r_n: \Chat \rightarrow \Chat$ which we will prove satisfy the conclusions of Theorems \ref{main_theorem} and \ref{rational_runge} for large $n$.


\begin{definition}\label{polynomial_defn} The mapping $g_n$ induces a Beltrami coefficient $\mu_n:=(g_n)_{\overline{z}}/(g_n)_z$, which, by way of the MRMT, defines a quasiconformal mapping $\phi_n:\Chat\rightarrow\Chat$ such that $r_n:=g_n\circ\phi_n^{-1}$ is holomorphic. We normalize $\phi_n$ so that $\phi_n(\infty)=\infty$ and $\phi_n(z)=z+O(1/|z|)$ as $z\rightarrow\infty$. 
\end{definition}

We now begin deducing that for large $n$, the maps $r_n$ satisfy the various conclusions in Theorems \ref{main_theorem} and \ref{rational_runge}.

\begin{prop}\label{rationalpluspoles} The function $r_n$ of Definition \ref{polynomial_defn} is rational, and $r_n^{-1}(\infty)=\phi_n(P')$. In particular, if $K$ is full and $P=\{\infty\}$, then $r_n$ is a polynomial. 
\end{prop}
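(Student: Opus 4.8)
The plan is to verify the two assertions about $r_n$ separately, both following from the structure of $g_n$ established in Sections \ref{quasiregular_approx} and the normalization of $\phi_n$.

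\textbf{Rationality of $r_n$.} First I would observe that $r_n = g_n \circ \phi_n^{-1}$ is holomorphic by construction (the Beltrami coefficient of $g_n$ is pulled back by $\phi_n$), so it remains to check that $r_n$ is a holomorphic self-map of $\Chat$ with only finitely many poles, hence rational. Since $g_n: \Chat \to \Chat$ is quasiregular (Corollary \ref{acrossedgescor}) and $\phi_n: \Chat \to \Chat$ is a quasiconformal homeomorphism, the composition $r_n: \Chat \to \Chat$ is a holomorphic map of the sphere, and any such map is automatically rational. (Alternatively: $g_n$ has finite topological degree, so $r_n$ does too, and a finite-degree holomorphic self-map of $\Chat$ is rational.)

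\textbf{Location of the poles.} Next I would identify $r_n^{-1}(\infty)$. Since $r_n = g_n \circ \phi_n^{-1}$ and $\phi_n$ is a bijection, $r_n^{-1}(\infty) = \phi_n\big(g_n^{-1}(\infty)\big)$. So the task reduces to computing $g_n^{-1}(\infty)$ from Definition \ref{g_final_defn}. On each $D_i$ we have $g_n = \Psi_i \circ B_n$ with $\Psi_i: \mathbb{D} \to \textrm{int}(\gamma_i)$ a Riemann map onto a bounded Jordan domain, and $B_n: D_i \to \mathbb{D}$; hence $g_n$ takes only finite values on $U = \cup_i D_i$. On each $\Omega_n'(i)$, Proposition \ref{branched_points_location} gives $g_n^{-1}(\infty) \cap \Omega_n'(i) = \{p_i\}$ where $p_i = P' \cap \Omega(i)$ is the single prescribed point in that component (this is the point with $\tau(p_i) = \infty$ in Notation \ref{omega_def_setup}). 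Finally, $g_n$ restricted to the remaining set $\partial\Omega = \cup_i \partial\Omega_n'(i)$ takes values on the curves $\Psi_j(\mathbb{T})$ by Propositions \ref{edgesonD} and \ref{edgesonO}, so $\infty$ is not attained there. Assembling these, $g_n^{-1}(\infty) = \{p_i : 1 \le i \le \ell\} = P'$, and therefore $r_n^{-1}(\infty) = \phi_n(P')$.

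\textbf{The polynomial case.} If $K$ is full, then $\Chat \setminus K$ is connected and contains $\infty$; the hypothesis $P = \{\infty\}$ forces $P' \subseteq \{\infty\}$, and in fact $P' = \{\infty\}$ since the unbounded component $V$ of $\Chat \setminus K$ satisfies $V \not\subseteq U$ (as $\infty \in V \setminus U$, recalling that $U$ is bounded because $f$ is holomorphic on a neighborhood of the compact set $K$). Thus $r_n^{-1}(\infty) = \phi_n(\{\infty\}) = \{\infty\}$ by the normalization $\phi_n(\infty) = \infty$. A rational function with a single preimage of $\infty$, located at $\infty$, is a polynomial, so $r_n$ is a polynomial.

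The main obstacle is bookkeeping rather than any deep point: one must carefully trace through the piecewise definition of $g_n$ in Definition \ref{g_final_defn} to confirm that $\infty$ is attained \emph{only} at the points $p_i$, which requires invoking Proposition \ref{branched_points_location} (for the interiors $\Omega_n'(i)$), the explicit form $\Psi_i \circ B_n$ (on the $D_i$), and the continuity/image statements of Propositions \ref{edgesonD}--\ref{edgesonO} (on the edges). Once $g_n^{-1}(\infty) = P'$ is established, conjugating by $\phi_n$ and reading off the normalization finishes both claims immediately.
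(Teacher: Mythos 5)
Your proposal is correct and follows essentially the same approach as the paper: rationality comes from $r_n$ being holomorphic $\Chat \to \Chat$, the pole set is computed as $\phi_n(g_n^{-1}(\infty))$ using boundedness of $g_n$ on $\overline{U}$ together with Proposition \ref{branched_points_location} on each $\Omega_n'(i)$, and the polynomial case reads off the normalization $\phi_n(\infty)=\infty$. You supply a bit more bookkeeping than the paper (explicitly checking the edges and verifying $P'=\{\infty\}$ in the full case), but the argument is the same.
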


\begin{proof} The function $r_n$ is holomorphic on $\Chat$ and takes values in $\Chat$: the only such functions are rational. Note that $g_n^{-1}(\infty)\cap\overline{U}=\emptyset$ since $g_n$ is bounded on $\overline{U}$. Thus, by Proposition \ref{branched_points_location} and (\ref{definition_of_g_notprelim}), we have that $g_n^{-1}(\infty)=P'$. Since $r_n:=g_n\circ\phi_n^{-1}$, we conclude that $r_n^{-1}(\infty)=\phi_n(P')$. The last statement of the proposition follows since we normalized $\phi_n(\infty)=\infty$, and the only rational functions with a unique pole at $\infty$ are polynomials.
\end{proof}

\begin{prop}\label{phi_close_to_id_prop} For all $R<\infty$, the mapping $\phi_n$ satisfies: \begin{equation}\label{phi_inequality0} ||\phi_n(z)-z||_{R\cdot\mathbb{D}}\xrightarrow{n\rightarrow\infty}0. \end{equation}
\end{prop}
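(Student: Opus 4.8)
The plan is to deduce (\ref{phi_inequality0}) from the standard dependence of the normalized solution of the Beltrami equation on the dilatation, combined with the fact that the support of $\mu_n$ shrinks to the empty set as $n\to\infty$. First I would recall the key input from earlier in the paper: by Proposition \ref{C_and_K}, each $g_n$ is $C$-vertex supported and $K$-quasiregular with $C$, $K$ independent of $n$, so $\|\mu_n\|_\infty\le k<1$ for a fixed $k$, and
\begin{equation}\nonumber
\textrm{supp}(\mu_n)\subset \bigcup_{i=1}^\ell\bigcup_{e\in\partial\Omega_n'(i)}\{z:\dist(z,e)<C\cdot\textrm{diam}(e)\}.
\end{equation}
By Proposition \ref{max_diam_omega} (applied in each component $\Omega(i)$, of which there are finitely many), the maximal edge diameter of $\partial\Omega_n'(i)$ tends to $0$ as $n\to\infty$; since the total number of edges is finite for each $n$ but, more importantly, the edges all lie within an $o(1)$-neighborhood of the fixed compact set $\partial U\cup\bigcup_i\Gamma_i^\infty$ (the $\Gamma_i$ converge to analytic limits by Proposition \ref{existence_of_curves}(4)), the area $|\textrm{supp}(\mu_n)|$ tends to $0$. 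In fact $\textrm{supp}(\mu_n)$ is eventually contained in any fixed neighborhood of $\partial U\cup\bigcup_i\Gamma_i^\infty$, a set of zero area, so $|\textrm{supp}(\mu_n)|\to 0$.

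Next I would invoke the quantitative form of the Measurable Riemann Mapping Theorem: if $\phi_n$ is the quasiconformal map with $(\phi_n)_{\overline z}/(\phi_n)_z=\mu_n$, normalized by $\phi_n(\infty)=\infty$ and $\phi_n(z)=z+O(1/|z|)$, then $\phi_n$ depends continuously (indeed, with uniform local modulus of continuity on bounded sets) on $\mu_n$ in the topology of $L^1_{\mathrm{loc}}$ convergence of Beltrami coefficients, provided $\|\mu_n\|_\infty$ stays bounded away from $1$ — which it does here. Since $\mu_n\to 0$ in $L^1_{\mathrm{loc}}$ (as $\|\mu_n\|_\infty\le k$ and $|\textrm{supp}(\mu_n)|\to0$), and the solution operator sends $\mu\equiv 0$ to the identity (by the normalization), we conclude $\phi_n\to\mathrm{id}$ uniformly on compact subsets of $\C$, which is exactly (\ref{phi_inequality0}). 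Concretely, one writes $\phi_n = \mathrm{id} + P[\nu_n]$ via the Beurling transform / Neumann series, where $\nu_n$ is supported on $\textrm{supp}(\mu_n)$ and $\|\nu_n\|_p \le C_k |\textrm{supp}(\mu_n)|^{1/p}\to 0$ for $p$ slightly larger than $2$; the Cauchy-transform term $P[\nu_n]$ is then small uniformly on $R\cdot\D$ by the $L^p\to C^{0,\alpha}$ estimate for the solid Cauchy transform.

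The main obstacle — really the only nontrivial point — is justifying that $|\textrm{supp}(\mu_n)|\to 0$ rather than merely that each individual edge-neighborhood shrinks. One must rule out the possibility that the number of edges grows fast enough to keep the total area from shrinking. This is handled by the geometric control from Section \ref{folding_section}: all vertices and edges of $\partial\Omega_n'(i)$ lie on $\partial U$ or on the curves $\Gamma_i$ (plus the bounded decorations/trees removed by folding), which by Theorem \ref{complicated_folding_adjustment}(1) and the vertex-supported property stay within $C\cdot(\max\textrm{diam})$ of the fixed $1$-dimensional set $\partial U\cup\bigcup_i\Gamma_i^\infty$; a set of Lebesgue measure zero has neighborhoods of arbitrarily small area, so $|\textrm{supp}(\mu_n)|\to 0$ follows regardless of the edge count. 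Everything else is a direct citation of the standard MRMT dependence estimates (e.g.\ as in Ahlfors–Bers), so the proof is short once this containment is recorded.
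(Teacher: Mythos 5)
Your proposal is correct and follows essentially the same route as the paper: the paper likewise combines the uniform bound $\|\mu_n\|_\infty\le k<1$ and the $C$-vertex-supported property with Proposition \ref{max_diam_omega} to get $\mathrm{Area}(\mathrm{supp}(\mu_n))\to 0$, and then cites the standard dependence of the normalized Beltrami solution on the coefficient. The only difference is that you spell out the two points the paper leaves implicit (that the support lies in a shrinking neighborhood of a fixed measure-zero set, so the edge count is irrelevant, and the Cauchy-transform estimate behind the MRMT step), which is a harmless elaboration rather than a different argument.
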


\begin{proof}
Since $g_n$ is $C$-vertex supported by Proposition \ref{C_and_K}, we conclude from Proposition \ref{max_diam_omega} that  \begin{equation}\label{small_area} \textrm{Area}(\textrm{supp}(\mu_n)) \xrightarrow{n\rightarrow\infty}0. \end{equation} 
The relation (\ref{phi_inequality0}) now follows from (\ref{small_area}) since $||\mu_n||_{L^\infty}\leq K$ for all $n$ by Proposition \ref{C_and_K}.
\end{proof}

\begin{thm}\label{p_n_crit_pts} For all sufficiently large $n$, the mapping $r_n$ satisfies $\emph{CP}(r_n)\subset\mathcal{D}$.
\end{thm}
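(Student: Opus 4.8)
The strategy is to show that all critical points of $r_n$ arise from branched points of $g_n$ transported by $\phi_n$, and that these stay inside $\mathcal{D}$ for large $n$. First I would observe that since $r_n = g_n\circ\phi_n^{-1}$ with $\phi_n$ a homeomorphism, $\mathrm{CP}(r_n) = \phi_n(\mathrm{BP}(g_n))$: away from $\mathrm{supp}(\mu_n)$ the map $\phi_n$ is conformal, so locally $r_n$ is (holomorphic)$\circ$(quasiconformal)$\circ$(holomorphic), and its local degree at $\phi_n(z)$ equals the local degree of $g_n$ at $z$; hence $r_n$ is locally injective exactly where $g_n$ is, i.e. off $\mathrm{BP}(g_n)$. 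So it suffices to show $\phi_n(\mathrm{BP}(g_n))\subset\mathcal{D}$ for all large $n$.

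Next I would locate $\mathrm{BP}(g_n)$. By Proposition \ref{branched_points_location} (applied in each $\Omega_n'(i)$) together with the fact that $g_n$ is holomorphic, hence unbranched, in each $D_i$, we have
\begin{equation}\nonumber
\mathrm{BP}(g_n)\subset\bigcup_{i=1}^{\ell}\ \bigcup_{e\in\partial\Omega_n(i)}\{z:\mathrm{dist}(z,e)<C\cdot\mathrm{diam}(e)\},
\end{equation}
with $C$ independent of $n$ by Proposition \ref{C_and_K}. By Proposition \ref{max_diam_omega}, the maximal edge diameter of each graph $\partial\Omega_n(i)$ tends to $0$ as $n\to\infty$; moreover every edge lies either on $\partial U$ (hence within $\varepsilon/2$ of $K$, and $\overline{U}\subset\mathcal{D}$) or on one of the curves $\Gamma_i\subset(\Chat\setminus U)\cap\mathcal{D}$. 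Since the $\Gamma_i$ have analytic limits (Proposition \ref{existence_of_curves}(4)) and $\overline{U}\subset\mathcal{D}$ with $\mathcal{D}$ open, the compact set $\overline{U}\cup\bigcup_i\Gamma_i^{(n)}$ stays in $\mathcal{D}$ and has a positive distance $2d>0$ from $\partial\mathcal{D}$, uniformly in $n$. Combining, for $n$ large enough that $C\cdot(\text{max edge diam}) < d$, every branched point of $g_n$ lies within distance $d$ of $\overline{U}\cup\bigcup_i\Gamma_i$, hence in the compact set $\{z:\mathrm{dist}(z,\overline{U}\cup\bigcup_i\Gamma_i)\le d\}\subset\mathcal{D}$.

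Finally I would push this forward by $\phi_n$. Fix $R$ large enough that the above compact neighborhood is contained in $R\cdot\mathbb{D}$. By Proposition \ref{phi_close_to_id_prop}, $\|\phi_n(z)-z\|_{R\cdot\mathbb{D}}\to 0$, so for $n$ large the image under $\phi_n$ of $\{z:\mathrm{dist}(z,\overline{U}\cup\bigcup_i\Gamma_i)\le d\}$ is contained in its $d$-neighborhood, which is still inside $\mathcal{D}$. Therefore $\mathrm{CP}(r_n)=\phi_n(\mathrm{BP}(g_n))\subset\mathcal{D}$ for all sufficiently large $n$, as claimed.

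The main obstacle is the first step — cleanly justifying $\mathrm{CP}(r_n)=\phi_n(\mathrm{BP}(g_n))$, i.e. that composing a locally injective quasiregular map with a quasiconformal homeomorphism neither creates nor destroys branching. This is standard (local degree is a topological invariant, and holomorphic local injectivity is equivalent to having local degree one), but it must be stated carefully since $\phi_n$ need not be conformal near a branched point; the cleanest route is to argue purely via local topological degree of the open discrete map $r_n$. Everything after that is a uniform-in-$n$ bookkeeping of diameters and distances.
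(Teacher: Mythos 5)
Your proposal is correct and follows essentially the same route as the paper: locate $\mathrm{BP}(g_n)$ via Propositions \ref{branched_points_location} and \ref{C_and_K}, shrink the vertex-supported neighborhoods into $\mathcal{D}$ using Proposition \ref{max_diam_omega}, and push forward by $\phi_n$ using Proposition \ref{phi_close_to_id_prop} together with $\phi_n(\mathrm{BP}(g_n))=\mathrm{CP}(r_n)$. You are somewhat more explicit than the paper about why the distance to $\partial\mathcal{D}$ is uniform in $n$ (invoking Proposition \ref{existence_of_curves}(4)) and about justifying $\mathrm{CP}(r_n)=\phi_n(\mathrm{BP}(g_n))$, which the paper takes as immediate; these are welcome clarifications rather than departures, and the minor slack in your $d$ vs.\ $2d$ bookkeeping is easily tightened.
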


\begin{proof} By Proposition \ref{max_diam_omega}, we have \begin{equation}\nonumber \max\left\{\textrm{diam}(e) : e \textrm{ is an edge of } \bigcup_{i=1}^\ell\partial\Omega(i) \right\} \xrightarrow{n\rightarrow\infty}0. \end{equation} Thus, since $\mathcal{D}$ is a domain containing $\cup_{i=1}^\ell\partial\Omega(i)$, we have by (\ref{b.p.location}) that $\textrm{BP}(g_n) \setminus U \subset \mathcal{D}$ for large $n$. Since $U \subset \mathcal{D}$ we conclude that $\textrm{BP}(g_n) \subset \mathcal{D}$ for large $n$. The result now follows from Proposition \ref{phi_close_to_id_prop} since $\phi(\textrm{BP}(g_n))=\textrm{CP}(r_n)$.
\end{proof}

\begin{thm}\label{Linftyconclusion} For all sufficiently large $n$, we have \begin{equation}\nonumber ||r_n-f||_{K}<2\varepsilon. \end{equation} \end{thm}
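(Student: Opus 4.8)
The plan is to combine three facts already established: that $g_n$ is holomorphic on $U$ and equals $\Psi_i \circ B_n$ there (Definition \ref{quasiregular_disc_definition} applied in each $D_i$, via (\ref{definition_of_g_prelim})), that $\Psi_i \circ B_n$ approximates $f$ on $K \cap D_i$ to within a small error coming from Proposition \ref{first_def_B_n}, and that $\phi_n \to \mathrm{id}$ locally uniformly by Proposition \ref{phi_close_to_id_prop}. Since $r_n := g_n \circ \phi_n^{-1}$, on the set $\phi_n(K)$ we have $r_n \circ \phi_n = g_n$, so controlling $\|r_n - f\|_K$ amounts to controlling $\|g_n \circ \phi_n - f\|_K$, and then transferring from $\phi_n(K)$ back to $K$ using that $\phi_n$ is close to the identity together with continuity of $f$.

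First I would fix $\varepsilon > 0$ and recall that in Notation \ref{last_section_notation} the compact set $K'$ and the domains $D_i$ were chosen so that $K \subset \mathrm{int}(K') \subset K' \subset U$; by construction (\ref{definition_of_g_prelim}) says $g_n = g_n(\varepsilon, K' \cap D_i, D_i, f|_{D_i}) = \Psi_i \circ B_n$ on $D_i$, where the $B_n$ are the proper maps of Proposition \ref{first_def_B_n} applied to the quadruple $(\varepsilon, K' \cap D_i, D_i, f|_{D_i})$. By (\ref{changeofcoords}) of Proposition \ref{first_def_B_n} we have $\|\Psi_i \circ B_n - f\|_{K' \cap D_i} < \varepsilon$ for every $n$, and since $K \cap D_i \subset K'$ this gives
\begin{equation}\nonumber
\|g_n - f\|_{K} < \varepsilon \quad \text{for all } n.
\end{equation}
(Here one uses that $K \subset U = \cup_i D_i$, so every point of $K$ lies in some $D_i$ where $g_n$ is holomorphic and the bound applies.)

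Next I would pass from $g_n$ to $r_n = g_n \circ \phi_n^{-1}$. For $z \in K$, write $r_n(z) = g_n(\phi_n^{-1}(z))$. By Proposition \ref{phi_close_to_id_prop}, $\phi_n^{-1} \to \mathrm{id}$ uniformly on a large disc containing $\overline{U}$ (the inverse of a quasiconformal map converging to the identity on a fixed large disc, with uniformly bounded dilatation, also converges to the identity on a slightly smaller disc; alternatively apply (\ref{phi_inequality0}) to $\phi_n^{-1}$ directly). Hence for large $n$, $\phi_n^{-1}(K)$ is contained in a fixed compact subset of $U$ on which $g_n$ is holomorphic; using the bound $\|g_n - f\|$ on that slightly larger compact set (the same argument as above applies since $K'$ was chosen with room to spare: for $n$ large $\phi_n^{-1}(K) \subset \mathrm{int}(K')$), we get $|g_n(\phi_n^{-1}(z)) - f(\phi_n^{-1}(z))| < \varepsilon$. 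Then
\begin{equation}\nonumber
|r_n(z) - f(z)| \leq |g_n(\phi_n^{-1}(z)) - f(\phi_n^{-1}(z))| + |f(\phi_n^{-1}(z)) - f(z)|,
\end{equation}
and the second term is $< \varepsilon$ for $n$ large by uniform continuity of $f$ on a compact neighborhood of $K$ together with $\|\phi_n^{-1} - \mathrm{id}\|_K \to 0$. This yields $\|r_n - f\|_K < 2\varepsilon$ for all sufficiently large $n$.

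The main obstacle is purely bookkeeping: making sure the "room to spare" in the choice of $K'$ (namely $K \subset \mathrm{int}(K') \subset K' \subset U$) is genuinely exploited so that $\phi_n^{-1}(K) \subset K'$ for large $n$, and that the holomorphicity of $g_n$ on $U$ lets the uniform bound (\ref{changeofcoords}) be applied at the slightly displaced points $\phi_n^{-1}(z)$ rather than only at points of $K$ itself. No new estimate is needed beyond Propositions \ref{first_def_B_n}, \ref{phi_close_to_id_prop} and the continuity of $f$; the factor $2\varepsilon$ (rather than $\varepsilon$) in the statement is exactly the slack absorbed by the $|f(\phi_n^{-1}(z)) - f(z)|$ term.
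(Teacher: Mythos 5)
Your proposal is correct and follows essentially the same route as the paper: the same triangle-inequality decomposition at $w_n=\phi_n^{-1}(z)$, with the first term controlled by Proposition \ref{first_def_B_n} (valid on $K'$ because $K'$ was chosen with room to spare) and the second by uniform continuity of $f$ together with Proposition \ref{phi_close_to_id_prop}. Your explicit remark on passing from $\phi_n\to\mathrm{id}$ to $\phi_n^{-1}\to\mathrm{id}$ is a point the paper glosses over, but it is not a different argument.
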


\begin{proof} First we note that since $f$ is uniformly continuous on $K'$, there exists $\delta>0$ so that if $z$, $w\in K'$ and $|z-w|<\delta$, then $|f(z)-f(w)|<\varepsilon$. By Proposition \ref{phi_close_to_id_prop}, we can conclude that \begin{equation}\label{phi_inequality} ||\phi_n(z)-z||_{K}<\textrm{min}(\delta, \dist(K, \partial K')) \end{equation} for all sufficiently large $n$. 

Let $z\in K$ and $w_n:=\phi_n^{-1}(z)$. Then 
\begin{equation} |r_n(z)-f(z)| = |g_n(w_n)-f(z)| \leq |g_n(w_n)-f(w_n)| + |f(w_n)-f(z)|. \end{equation}
It follows from (\ref{phi_inequality}) that $|f(w_n)-f(z)|<\varepsilon$ for sufficiently large $n$. Since $w_n\in K'$ for large $n$, we also have by (\ref{g_n_disc_defn}) that $g_n(w_n):=\Psi_j\circ B_n(w_n)$, whence it follows from Proposition \ref{first_def_B_n} that $|g_n(w_n)-f(w_n)|<\varepsilon$. 
\end{proof}



\begin{thm}\label{third_condition_proof} For all sufficiently large $n$, we have \begin{equation}\nonumber \emph{CV}(r_n) \subset N_\varepsilon\widehat{f(K)} \end{equation}
\end{thm}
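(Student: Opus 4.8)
The plan is to trace the critical values of $r_n$ back through the quasiconformal conjugacy to the branched values of $g_n$, and then to locate $\mathrm{BV}(g_n)$ inside $N_\varepsilon\widehat{f(K)}$. First I would observe that $\mathrm{CV}(r_n)=g_n(\mathrm{BP}(g_n))=\mathrm{BV}(g_n)$, since $r_n=g_n\circ\phi_n^{-1}$ and $\phi_n$ is a homeomorphism (so conjugation by $\phi_n$ preserves the local degree at every point, hence carries branched points to critical points bijectively and leaves the values unchanged). Thus it suffices to show $\mathrm{BV}(g_n)\subset N_\varepsilon\widehat{f(K)}$ for all large $n$. There are two sources of branched points of $g_n$: those coming from the Blaschke-type map $\Psi_i\circ B_n$ inside each $D_i$, and those coming from the folding construction along $\partial\Omega_n'(i)$, which by Proposition \ref{branched_points_location} have branched values lying on $\cup_i\Psi_i(\mathbb{T})$.

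For the folding branched values, recall from Definition \ref{Psi_defn} and \eqref{gamma_close_to_f(K)} (applied via Definition \ref{many_comps_vertex_defn} to each quadruple $(\varepsilon,K'\cap D_i,D_i,f|_{D_i})$) that each curve $\gamma_i=\Psi_i(\mathbb{T})$ was chosen so that $\mathrm{dist}(w,f(K'\cap D_i))<\varepsilon$ for every $w\in\gamma_i$. Since $f(K'\cap D_i)\subset f(K')$ and $K'$ can be taken inside a neighborhood of $K$ on which $f$ is uniformly close to $f$ on $K$ — more precisely, by shrinking the neighborhood $U$ (hence $K'$) at the outset we may assume $f(K')\subset N_{\varepsilon/2}f(K)$ — we get $\cup_i\Psi_i(\mathbb{T})\subset N_{2\varepsilon}f(K)\subset N_{2\varepsilon}\widehat{f(K)}$. (A cosmetic adjustment of constants, or simply applying the construction with $\varepsilon/3$ in place of $\varepsilon$, upgrades this to $N_\varepsilon$.) For the branched values inside $D_i$: there $g_n=\Psi_i\circ B_n$ with $B_n$ a proper map $D_i\to\mathbb{D}$, so $g_n(D_i)=\mathrm{int}(\gamma_i)$ and hence every value of $g_n$ on $D_i$ — in particular every branched value — lies in $\overline{\mathrm{int}(\gamma_i)}$. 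Now $\overline{\mathrm{int}(\gamma_i)}$ is a closed Jordan domain whose boundary $\gamma_i$ lies within $\varepsilon$ of $f(K)$, and since $\gamma_i$ surrounds $f(D_i)\supset f(K'\cap D_i)$, the region $\mathrm{int}(\gamma_i)$ is contained in $N_\varepsilon\widehat{f(K)}$: indeed $\widehat{f(K)}$ already contains $f(K)$ together with all bounded complementary components, and any point enclosed by $\gamma_i$ is within $\varepsilon$ of $\gamma_i\subset N_\varepsilon f(K)$ and is either in $f(K)$, in a bounded complementary component of $f(K)$ (hence in $\widehat{f(K)}$), or within $\varepsilon$ of such — in all cases in $N_\varepsilon\widehat{f(K)}$.

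Combining the two cases gives $\mathrm{BV}(g_n)\subset N_\varepsilon\widehat{f(K)}$, and therefore $\mathrm{CV}(r_n)=\mathrm{BV}(g_n)\subset N_\varepsilon\widehat{f(K)}$ for all $n$ (the statement is even uniform in $n$ once the neighborhoods are fixed; the ``sufficiently large $n$'' is only needed to match up with the other conclusions and with the approximation statement). The main obstacle I anticipate is the bookkeeping in the third paragraph: one must be careful that $\widehat{f(K)}$ refers to the polynomially convex hull of $f(K)$ as a whole, not a union of hulls of the pieces $f(K\cap D_i)$, and that the Jordan domains $\mathrm{int}(\gamma_i)$ fit inside the $\varepsilon$-neighborhood of that hull; this is where one uses that $\gamma_i$ was chosen close to $f(K)$ and surrounding $f(D_i)$, so no $\gamma_i$ can enclose a point far from $\widehat{f(K)}$. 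Everything else is a direct application of Proposition \ref{branched_points_location}, Definition \ref{Psi_defn}, and the homeomorphism property of $\phi_n$.
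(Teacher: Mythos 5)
Your proof follows the same route as the paper's: reduce to $\mathrm{CV}(r_n)=\mathrm{BV}(g_n)$ via the homeomorphism $\phi_n$, split branched points into those in $\bigcup_i D_i$ and those in the folding region, handle the folding region via Proposition~\ref{branched_points_location} (i.e.\ (\ref{b.v.location.equation}) and (\ref{gamma_close_to_f(K)})), and handle $D_i$ via $g_n|_{D_i}=\Psi_i\circ B_n$ mapping into $\mathrm{int}(\gamma_i)$. That is precisely the paper's decomposition, so there is no genuinely different idea here.

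One sentence in your third paragraph should be flagged, though. You write that ``any point enclosed by $\gamma_i$ is within $\varepsilon$ of $\gamma_i$''; this is simply false for Jordan curves in general (the centre of a large circle is far from the circle), and it is not what you need anyway. What the argument actually requires is the containment $\mathrm{int}(\gamma_i)\subset N_\varepsilon\widehat{f(K)}$, which does not follow from $\gamma_i\subset N_\varepsilon f(K)$ alone: one must also use that $\gamma_i$ surrounds $f(D_i)$ and that $f(\partial D_i)$ lies in $N_{\varepsilon/2} f(K)$ (via (\ref{closetoboundaryassumption}) and the maximum principle bounding $f(D_i)$). The paper's own proof is equally terse at this point --- it just cites Definition~\ref{quasiregular_disc_definition} --- so this is not a defect specific to your write-up, but the stepping-stone claim you inserted is incorrect as literally stated and should be replaced by the containment argument just indicated. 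The $K$ vs.\ $K'$ bookkeeping you raise (replacing $\varepsilon$ by $\varepsilon/3$, say) is a reasonable way to absorb the constants and matches the spirit of the paper's ``for sufficiently large $n$'' phrasing.
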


\begin{proof} Since \begin{equation}\nonumber \textrm{CV}(r_n)=\textrm{BV}(g_n),  \end{equation} it suffices to show that for every $z\in\textrm{BP}(g_n)$ and sufficiently large $n$, we have \begin{equation}\label{wtsgn} g_n(z)\in N_\varepsilon\widehat{f(K)}. \end{equation}
For $z\in\textrm{BP}(g_n)\setminus U$, it follows from (\ref{b.v.location.equation}) that $g_n(z) \in N_\varepsilon f(K) \subset N_\varepsilon\widehat{f(K)}$. For $z\in\textrm{BP}(g_n)\cap D_i$,  (\ref{wtsgn}) follows from Definition \ref{quasiregular_disc_definition} of $g_n|_{D_i}$.
\end{proof}

\noindent \emph{Proof of Theorem \ref{rational_runge}:} In the special case that $||f||_{K}<1$, we have already proven that the mappings $r_n$ satisfy the conclusions of Theorem \ref{rational_runge} for all sufficiently large $n$. Indeed, Theorem \ref{Linftyconclusion} says that $||r_n-f||_K<2\varepsilon$, conclusion (2) in Theorem \ref{rational_runge} is Theorem \ref{p_n_crit_pts}, and conclusion (3) is Theorem \ref{third_condition_proof}. Conclusion (1) follows from Propositions \ref{rationalpluspoles}, \ref{phi_close_to_id_prop}. The general case follows by applying the above special case to an appropriately rescaled $f$. \hfill $\qed$

\vspace{2.5mm}

\noindent \emph{Proof of Theorem \ref{main_theorem}:} When $K$ is full, we may take $P=\{\infty\}$ and apply Theorem \ref{rational_runge}, in which case Proposition \ref{rationalpluspoles} guarantees that the maps $r_n$ are polynomials. \hfill $\qed$



\vspace{2.5mm}
\begin{thm}\label{Mergelyan} {\bf (Mergelyan$+$)}
Let $K\subset\mathbb{C}$ be full, suppose $f\in C(K)$ is holomorphic in $\emph{int}(K)$, and let $\mathcal{D}$ be a domain containing $K$. For every $\varepsilon>0$, there exists a polynomial $p$ so that $||p-f||_{K}<\varepsilon$ and: 
\begin{enumerate}  
\item $\emph{CP}(p)\subset \mathcal{D}$, 
\item $\emph{CV}(p) \subset N_\varepsilon\widehat{f(K)}$. 
\end{enumerate} \end{thm}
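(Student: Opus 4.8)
The plan is to derive Theorem \ref{Mergelyan} from Theorem \ref{main_theorem} by the standard device of first approximating $f$ by a function holomorphic in a \emph{neighborhood} of $K$, and then applying Theorem \ref{main_theorem} to that function. The only subtlety is that the neighborhood-level approximant must be controlled well enough that the added error in the critical values is absorbed into the $\varepsilon$-neighborhood $N_\varepsilon\widehat{f(K)}$, and that the domain $\mathcal{D}$ remains a legitimate choice of critical-point locus for the second step.

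First I would invoke Mergelyan's classical theorem: since $K$ is full (compact with connected complement) and $f\in C(K)$ is holomorphic on $\interior(K)$, for any $\eta>0$ there is a polynomial $q$ with $\|q-f\|_K<\eta$. In particular $q$ is holomorphic in a neighborhood of $K$ (indeed on all of $\mathbb{C}$), so Theorem \ref{main_theorem} applies to $q$. Choosing $\eta$ small at the outset (how small to be pinned down momentarily), I then apply Theorem \ref{main_theorem} to the data $(K,\mathcal{D},q)$ and a tolerance $\varepsilon'$ to obtain a polynomial $p$ with $\|p-q\|_K<\varepsilon'$, $\textrm{CP}(p)\subset\mathcal{D}$, and $\textrm{CV}(p)\subset N_{\varepsilon'}\widehat{q(K)}$. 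The triangle inequality gives $\|p-f\|_K\le\|p-q\|_K+\|q-f\|_K<\varepsilon'+\eta$, so taking $\varepsilon'+\eta<\varepsilon$ handles the uniform-approximation conclusion, and conclusion (1) is immediate since the first step already guarantees $\textrm{CP}(p)\subset\mathcal{D}$.

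The one step needing a small argument is conclusion (2): I must check $N_{\varepsilon'}\widehat{q(K)}\subset N_\varepsilon\widehat{f(K)}$ for suitable choices. The key observation is a Hausdorff-continuity/monotonicity fact for polynomial hulls: if $\|q-f\|_K<\eta$ then $q(K)\subset N_\eta f(K)$, hence $\widehat{q(K)}\subset\widehat{N_\eta f(K)}\subset N_\eta\widehat{f(K)}$ — the last inclusion because taking the $\eta$-neighborhood of a set only enlarges it and fills in no "new" holes beyond an $\eta$-neighborhood of the existing hull (a bounded complementary component of $N_\eta f(K)$ lies within distance $\eta$ of a bounded complementary component of $f(K)$ or is itself within $\eta$ of $f(K)$). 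Consequently $N_{\varepsilon'}\widehat{q(K)}\subset N_{\varepsilon'+\eta}\widehat{f(K)}$, and choosing $\varepsilon'+\eta<\varepsilon$ (which we already arranged) yields conclusion (2).

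The main obstacle — though it is minor — is justifying the inclusion $\widehat{N_\eta f(K)}\subset N_\eta\widehat{f(K)}$ cleanly; one should phrase it carefully since polynomial hulls do not behave continuously under arbitrary perturbations, but they do under this one-sided thickening. I would record this as a short lemma (or a parenthetical remark) before the proof, proving it by noting that $\mathbb{C}\setminus N_\eta\widehat{f(K)}$ is connected and unbounded (it is the set of points at distance $>\eta$ from the connected-at-infinity set $\widehat{f(K)}$, unioned appropriately), hence contained in the unbounded complementary component of $N_\eta f(K)$, which forces $\widehat{N_\eta f(K)}\subset N_\eta\widehat{f(K)}$. With that in hand the proof of Theorem \ref{Mergelyan} is a two-line reduction. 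I should also note for completeness that the case $\mathcal{D}\not\supset K$ does not arise since $\mathcal{D}$ is assumed to contain $K$, and that no hypothesis relating $\mathcal{D}$ to the (irrelevant, since $q$ is entire) domain of holomorphy of $q$ is needed, matching the remark after Theorem \ref{main_theorem}.
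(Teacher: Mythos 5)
Your reduction is exactly the paper's: classical Mergelyan to get a polynomial $q$ close to $f$ on $K$, apply Theorem \ref{main_theorem} to $q$, finish with the triangle inequality (the paper simply takes $\eta=\varepsilon'=\varepsilon/2$ and asserts that the conclusions are inherited). You correctly notice that transferring conclusion (2) from $q$ to $f$ is the one step that needs argument, which the paper passes over in silence; the trouble is that the lemma you supply for it is false.

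Both the inclusion $\widehat{\,\overline{N_\eta f(K)}\,}\subset N_\eta\widehat{f(K)}$ and the underlying claim that $\mathbb{C}\setminus N_\eta\widehat{f(K)}$ is connected fail in general. Take $K=[0,2\pi-\delta]$ and $f(t)=e^{it}$, so $f(K)$ is the unit circle with a short arc of size comparable to $\delta$ removed; here $K$ is full, $f$ is continuous on $K$ with $\interior(K)=\emptyset$, and $\widehat{f(K)}=f(K)$ because an arc has connected complement. For $\eta>\delta$, the set $N_\eta\widehat{f(K)}$ closes the gap, so $\mathbb{C}\setminus N_\eta\widehat{f(K)}$ acquires a bounded component around the origin and is disconnected, while $\widehat{\,\overline{N_\eta f(K)}\,}$ is essentially a closed disc, which is not contained in the annular set $N_\eta\widehat{f(K)}$ (the origin is at distance $1$ from the arc). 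The same example shows that the inclusion you actually need, $\widehat{q(K)}\subset N_\eta\widehat{f(K)}$, is not automatic from $\|q-f\|_K<\eta$: nothing in Mergelyan's theorem forbids $q$ from carrying $K$ onto a curve that self-intersects near $1$ and encircles the origin while staying uniformly within $\eta$ of $f(K)$, in which case $\widehat{q(K)}$ contains a neighborhood of $0$. To close this gap one must either argue that the Mergelyan approximant can be chosen so that $\widehat{q(K)}$ lies in a small neighborhood of $\widehat{f(K)}$, or trace the critical values through the construction behind Theorem \ref{main_theorem} rather than invoking it as a black box; a one-sided thickening lemma alone cannot do it.
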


\noindent \emph{Proof}: By the usual version of 
Mergelyan's Theorem, there exists a polynomial $q$ so that $||q-f||_{K}<\varepsilon/2$. Apply Theorem \ref{main_theorem} to $K$, $\mathcal{D}$, $q$, $\varepsilon/2$ to obtain an approximant of $q$ which we denote by $p$. The polynomial $p$ satisfies the conclusions of Theorem \ref{Mergelyan}. \hfill $\qed$

\begin{cor}\label{Weierstrass} {\bf (Weierstrass$+$)}
	Suppose  that $I\subset\reals$ is a closed interval, $f: I \rightarrow\mathbb{R}$ is continuous, and $U$, $V\subset\mathbb{C}$ are planar domains containing $I$, $f(I)$, respectively. Then, for every $\varepsilon>0$,
 there exists a polynomial $p$ with real coefficients so that $\|f-p\|_I \leq \varepsilon$, and 
\begin{enumerate}  
\item $\emph{CP}(p)\subset U$,
\item $ \emph{CV}(p) \subset V$. 
\end{enumerate} 
\end{cor}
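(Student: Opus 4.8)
The plan is to reduce to the polynomial case of Theorem \ref{main_theorem} and then force real coefficients by running the construction of Section \ref{main_proofs_section} equivariantly with respect to the reflection $c(z):=\bar z$. First observe that $I$ is compact and full, that $f(I)$ is a compact connected subset of $\mathbb{R}$ — hence a closed interval (or a point), so that $\mathbb{C}\setminus f(I)$ is connected and $\widehat{f(I)}=f(I)$ — and that, since $V$ is open with $f(I)\subset V$, there is $\varepsilon_1\in(0,\varepsilon]$ with $N_{\varepsilon_1}f(I)\subset V$. Replace $U$ by the connected component $U'$ of $U\cap c(U)$ that contains $I$; then $U'$ is a domain with $I\subset U'\subset U$ and $c(U')=U'$. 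By the classical Weierstrass theorem fix a polynomial $q$ with real coefficients and $\|f-q\|_I$ as small as we please; then $q(I)\subset\mathbb{R}$ is a closed interval with $\widehat{q(I)}=q(I)$, and $N_\delta q(I)\subset V$ once $\delta$ and $\|f-q\|_I$ are small.

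Next, carry out the proof of Theorem \ref{main_theorem} (that is, Theorem \ref{rational_runge} with $P=\{\infty\}$) for the data $K=I$, $\mathcal{D}=U'$, $f=q$, and a target accuracy to be fixed small at the end, choosing every auxiliary object to be $c$-invariant. This is especially clean here: since $I$ is connected we may take the analytic neighborhood of $K$ of Notation \ref{last_section_notation} to be a single $c$-invariant analytic Jordan domain $D$, thin enough that \eqref{closetoboundaryassumption} holds; then $k=\ell=1$, there are no connecting curves $\Gamma_i$, and the complementary component $\Omega$ is simply $\mathbb{C}\setminus\overline{D}$, which is $c$-invariant. The curve $\gamma$ of Definition \ref{Psi_defn} around $q(D)$ can be chosen $c$-invariant; all the Riemann maps that occur ($\Psi$, $\tau$, and the maps of Notation \ref{RMT_notation} describing $p$) can be normalized by a positive-real-derivative condition so that each commutes with $c$; the proper map $B$ of Theorem \ref{Grunsky_thm} approximating $\Psi^{-1}\circ q$ — a function with real Taylor coefficients — can be taken with real Taylor coefficients, and the auxiliary Blaschke products $\mathcal{B}_{n,\delta,c}$ of \eqref{blaschkeclosetoiddefn} are already $c$-symmetric since $\{e^{2\pi ij/n}\}$ and $r$ are; hence the vertex sets $\mathcal{V}_n$, $V_n$ are $c$-symmetric, and the quasiconformal interpolations $\lambda$, $\psi$, $\eta_1^{\Psi}$, $\eta_1$, $h_n$, $E_n$ of Sections \ref{folding_section}--\ref{extension_section3} can each be taken $c$-equivariant, being built by interpolating on individual edges and rectangles of a $c$-symmetric combinatorial pattern under $c$-symmetric boundary data. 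Consequently the quasiregular map $g_n$ of Definition \ref{g_final_defn} satisfies $g_n(\bar z)=\overline{g_n(z)}$, so its Beltrami coefficient satisfies $\mu_n(\bar z)=\overline{\mu_n(z)}$. Then $z\mapsto\overline{\phi_n(\bar z)}$ is a quasiconformal map with Beltrami coefficient $\mu_n$ and the same normalization at $\infty$ as $\phi_n$, so by the uniqueness statement in the MRMT (Definition \ref{polynomial_defn}) it equals $\phi_n$; hence $r_n(\bar z)=\overline{r_n(z)}$, i.e.\ the polynomial $r_n$ of Proposition \ref{rationalpluspoles} has real coefficients. (If $\|q\|_{\overline{D}}\ge1$ one first conjugates $q$ by a real affine map onto $\mathbb{D}$, as in the proof of Theorem \ref{rational_runge}; this preserves $c$-symmetry and replaces $N_\varepsilon\widehat{q(I)}$ only by its real affine image.)

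Finally, for all large $n$ the polynomial $p:=r_n$ satisfies $\textrm{CP}(p)\subset U'\subset U$ by Theorem \ref{p_n_crit_pts}, and $\textrm{CV}(p)\subset N_\delta\widehat{q(I)}=N_\delta q(I)\subset V$ by Theorem \ref{third_condition_proof}; and $\|p-f\|_I\le\|p-q\|_I+\|q-f\|_I<\varepsilon$ by Theorem \ref{Linftyconclusion}, provided the target accuracy fed into the construction and $\|f-q\|_I$ were chosen small enough relative to $\varepsilon$ and $\varepsilon_1$. This produces the required $p$.

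The step I expect to be the main obstacle is the claim in the second paragraph that each auxiliary quasiconformal map can be chosen $c$-equivariant: making this precise requires inspecting the proofs of Proposition \ref{first_folding_adjustment}, Theorem \ref{complicated_folding_adjustment}, Lemmas \ref{eta_i_psi} and \ref{squished_eta}, and Theorem \ref{zmadjustment} — as well as the cited results of \cite{Bis15} and \cite{MR3445628} — and checking that the edgewise interpolations used there respect the reflection symmetry whenever the underlying combinatorial data does. In the present case of a single connected $K$, this reduces to pairing conjugate edges and performing conjugate interpolations, and to verifying that Theorem \ref{Grunsky_thm} (equivalently Carath\'eodory's theorem) produces a real-coefficient approximant to a function with real Taylor coefficients.
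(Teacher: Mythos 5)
Your proposal is correct and follows essentially the same route as the paper: approximate $f$ by a real polynomial, then run the quasiregular construction with all auxiliary data symmetric under $z\mapsto\bar z$, so that uniqueness in the MRMT forces $\phi_n$ to commute with conjugation and $r_n$ to have real coefficients. The only (immaterial) differences are that the paper obtains the real polynomial $Q$ by symmetrizing a Mergelyan approximant rather than invoking classical Weierstrass, and applies the construction to the closure of an $\mathbb{R}$-symmetric component of $Q^{-1}(V_1)$ for a small neighborhood $V_1$ of $f(I)$ rather than to $I$ itself; like you, the paper asserts rather than verifies in detail that the construction yields an $\mathbb{R}$-symmetric Beltrami coefficient, which is exactly the step you flag.
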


\begin{proof} Let $I=[a,b]$, and $f$, $U$, $V$ as in the statement of the corollary. By Theorem \ref{Mergelyan}, there exists a complex polynomial $q$ so that $||q-f||_{[a,b]}<\varepsilon/2$. The real polynomial
\begin{equation}\nonumber Q(z):=\frac{q(z)+\overline{q(\overline{z})}}{2} \end{equation}
satisfies $Q(x)=\textrm{Re}(q(x))$ for $x\in\mathbb{R}$ and hence $||Q-f||_{[a,b]}<\varepsilon/2$. We will use the symbol $\Subset$ to mean compactly contained. Let $V_1\Subset V$ be a sufficiently small, $\mathbb{R}$-symmetric domain containing $f(I)$ so that there is a component of $Q^{-1}(V_1)$ (which we denote by $U_1$) satisfying $U_1\Subset U$. Let $U_2$ be a $\mathbb{R}$-symmetric, analytic domain satisfying $I\Subset U_1\Subset U_2\Subset U$. 
Recall Notation \ref{last_section_notation} and consider: 
\begin{enumerate}
\item the compact set $\overline{U_1}$, 
\item the analytic function $Q$,
\item the analytic domain $U_2$ containing $\overline{U_1}$,
\item $\textrm{min}\{\varepsilon/2,\dist(\partial V_1, \partial V) \}$,
\item $P=\{\infty\}$.
\end{enumerate}
Applying Definition \ref{g_final_defn} to (1)-(5) yields quasiregular mappings $g_n$ with $\mathbb{R}$-symmetric Beltrami coefficient, so that
\begin{equation}\label{pntoberescaled} p_n:=g_n\circ\phi_n^{-1} \end{equation}
is a real polynomial approximant of $Q$ satisfying:
 \begin{enumerate}  \item $||p_n-f||_{[a,b]}<\varepsilon$,
  \item $\textrm{CP}(p_n)\subset U_2$, 
  \item $ \textrm{CV}(p_n) \subset V$, \end{enumerate}
  for large $n$. Thus $p_n$ satisfies the conclusion of Corollary \ref{Weierstrass} for large $n$.
 \end{proof}
 

Recall the notation $\Omega(i)$ from Definition \ref{g_final_defn}, and let $\tau_i:\Omega(i)\rightarrow\mathbb{D}^*$ be the conformal mapping satisfying $\tau_i^{-1}(\infty)=P'\cap\Omega(i)$ as in Notation \ref{omega_def_setup}. The following fact justifies part of our description in the introduction of the behavior of the rational approximants off $K$.

\begin{prop}\label{largeoutsidesmalldisc} Let $1<r<R<\infty$. Then, for all sufficiently large $n$, we have
\begin{equation}\label{firstWTSoutside} r_n\circ\phi_n(z)=\tau_i(z)^m \textrm{ and } \end{equation}
\begin{equation}\label{2ndWTSoutside} |r_n(z)|>R \end{equation}
for all $z\in\tau_i^{-1}(\{z : |z|>r \})$.
\end{prop}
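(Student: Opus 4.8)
The plan is to deduce both displays from the formula (\ref{zmoutsidenbhd}), which already states that $g_n(z) = \tau_i(z)^m$ for $z \in \tau_i^{-1}(\{|z| > r\})$ once $n$ is large enough, together with the fact that $\phi_n \to \mathrm{id}$ uniformly on compacta (Proposition \ref{phi_close_to_id_prop}). The first step is to observe that since $g_n$ is holomorphic outside $\mathrm{supp}(\mu_n)$, and by Proposition \ref{max_diam_omega} the support of $\mu_n$ is eventually contained in an arbitrarily small neighborhood of $\cup_i \partial\Omega(i)$, the Beltrami coefficient $\mu_n$ vanishes on $\tau_i^{-1}(\{|z| > r'\})$ for any fixed $1 < r' < r$ and all large $n$; hence $\phi_n$ is conformal there. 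Then $r_n \circ \phi_n = g_n$ on that region by definition of $r_n := g_n \circ \phi_n^{-1}$, and combining with (\ref{zmoutsidenbhd}) gives (\ref{firstWTSoutside}).

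For (\ref{2ndWTSoutside}), the idea is: for $z \in \tau_i^{-1}(\{|z| > r\})$, write $w := \phi_n(z)$, which lies in $\tau_i^{-1}(\{|z| > r'\})$ for large $n$ (since $\phi_n \to \mathrm{id}$ and $\tau_i$ is a fixed conformal map, the image stays in a slightly larger such region). Applying (\ref{firstWTSoutside}) at the appropriate point, $|r_n(z)| = |r_n(\phi_n(\phi_n^{-1}(z)))|$ — more directly, for $\zeta \in \tau_i^{-1}(\{|z| > r\})$ we have $|r_n(\phi_n(\zeta))| = |\tau_i(\zeta)|^m > r^m$. Since $m = m(n) \to \infty$ as $n \to \infty$, we have $r^m \to \infty$, so $r^m > R$ for large $n$. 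Finally, because $\phi_n$ is a homeomorphism close to the identity, $\phi_n(\tau_i^{-1}(\{|z| > r\}))$ covers $\tau_i^{-1}(\{|z| > r''\})$ for some $r < r'' < R^{1/m}$-controlled radius; after adjusting the radii so that the image of $\{|z|>r\}$ under $z\mapsto\tau_i^{-1}(z)$ followed by $\phi_n$ still contains the set on which we want (\ref{2ndWTSoutside}), the bound $|r_n| > R$ follows. In practice it is cleanest to fix an intermediate radius $r_0$ with $1 < r_0 < r$, prove (\ref{firstWTSoutside}) on $\tau_i^{-1}(\{|z| > r_0\})$, note $\phi_n^{-1}$ maps $\tau_i^{-1}(\{|z| > r\})$ into $\tau_i^{-1}(\{|z| > r_0\})$ for large $n$, and then $|r_n(z)| = |\tau_i(\phi_n^{-1}(z))|^m$; since $\phi_n^{-1}(z)$ is close to $z$, $|\tau_i(\phi_n^{-1}(z))|$ is bounded below by something like $r - o(1) > r_0$, giving $|r_n(z)| \geq r_0^m \to \infty > R$.

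The main obstacle is purely bookkeeping with the three radii $1 < r_0 < r < R^{?}$: one must choose $r_0$ first, verify that $\mathrm{supp}(\mu_n)$ avoids $\tau_i^{-1}(\{|z| > r_0\})$ and that $\phi_n(\tau_i^{-1}(\{|z| > r_0\}))$ still contains $\tau_i^{-1}(\{|z| > r\})$ for large $n$, both of which follow from Propositions \ref{max_diam_omega} and \ref{phi_close_to_id_prop} applied on a suitable compact set. There is a subtlety in that Proposition \ref{phi_close_to_id_prop} only gives closeness on $R\cdot\mathbb{D}$, so one works inside a large disc and handles the neighborhood of $\infty$ using the normalization $\phi_n(z) = z + O(1/|z|)$; but since $\tau_i^{-1}$ maps a neighborhood of $\infty$ in $\mathbb{D}^*$ to a neighborhood of $P' \cap \Omega(i)$, which is a fixed point, this is a bounded region and the compact-set estimate suffices. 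No genuinely new analysis is required: the statement is essentially a corollary of (\ref{zmoutsidenbhd}) and the convergence $\phi_n \to \mathrm{id}$.

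\begin{proof}
Fix $1 < r_0 < r$. By Proposition \ref{max_diam_omega} and Proposition \ref{C_and_K}, for all sufficiently large $n$ the support of $\mu_n := (g_n)_{\overline z}/(g_n)_z$ is disjoint from $\tau_i^{-1}(\{|z| > r_0\})$; hence $\phi_n$ is conformal there, and since $r_n := g_n\circ\phi_n^{-1}$ we get $r_n\circ\phi_n = g_n$ on $\tau_i^{-1}(\{|z| > r_0\})$. Combining this with (\ref{zmoutsidenbhd}) (applied with $r_0$ in place of $r$) yields, for all large $n$,
\begin{equation}\nonumber r_n\circ\phi_n(z) = g_n(z) = \tau_i(z)^m \textrm{ for all } z\in\tau_i^{-1}(\{|z| > r_0\}). \end{equation}
This proves (\ref{firstWTSoutside}), which holds a fortiori on the smaller region $\tau_i^{-1}(\{|z| > r\})$.

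To prove (\ref{2ndWTSoutside}), let $z\in\tau_i^{-1}(\{|z| > r\})$ and set $w := \phi_n^{-1}(z)$. The set $\tau_i^{-1}(\{|z| > r\})$ is a neighborhood of the fixed point $P'\cap\Omega(i)$, and by Proposition \ref{phi_close_to_id_prop} (together with the normalization $\phi_n(z) = z + O(1/|z|)$ near $\infty$) we have $\phi_n^{-1}\to\mathrm{id}$ uniformly on a neighborhood of $\overline{\tau_i^{-1}(\{|z|\geq r\})}$. Since $\tau_i^{-1}$ is a fixed conformal map, it follows that for all sufficiently large $n$ we have $w\in\tau_i^{-1}(\{|z| > r_0\})$ and moreover $|\tau_i(w)| > r_0$. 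Applying (\ref{firstWTSoutside}) at the point $w$ gives
\begin{equation}\nonumber |r_n(z)| = |r_n\circ\phi_n(w)| = |\tau_i(w)|^m > r_0^{\,m}. \end{equation}
Finally, $m = m(n)\to\infty$ as $n\to\infty$ (as noted in the proof of (\ref{zmoutsidenbhd})), so $r_0^{\,m} > R$ for all sufficiently large $n$. This establishes (\ref{2ndWTSoutside}) and completes the proof.
\end{proof}
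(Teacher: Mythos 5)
Your proof is correct and follows essentially the same route as the paper's: apply (\ref{zmoutsidenbhd}) with a slightly smaller radius, use $r_n\circ\phi_n=g_n$ for the first display, and use $\phi_n\to\mathrm{id}$ (Proposition \ref{phi_close_to_id_prop}) plus $m(n)\to\infty$ for the second. The one small remark is that your step showing $\phi_n$ is conformal on $\tau_i^{-1}(\{|z|>r_0\})$ is unnecessary, since $r_n\circ\phi_n=g_n$ holds everywhere by the definition $r_n:=g_n\circ\phi_n^{-1}$; otherwise your use of $\phi_n^{-1}$ for the second display is if anything slightly more direct than the paper's phrasing.
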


\begin{proof} Fix $r$ and $R$ as in the statement. From (\ref{zmoutsidenbhd}) and the functional equation (\ref{definition_of_g_notprelim}) defining $g_n$ in $\Omega(i)$, it follows that:
\begin{equation} g_n(z)=\tau_i(z)^m \textrm{ for all } z\in\tau_i^{-1}(\{z : |z|>(r+1)/2 \}) \end{equation} 
for all large $n$. Since
\begin{equation} r_n\circ\phi_n=g_n, \end{equation}
The relation (\ref{firstWTSoutside}) follows. Moreover, we have by Proposition \ref{phi_close_to_id_prop} that:
\begin{equation} \phi_n\circ\tau_i^{-1}(\{z : |z|>r \}) \subset \tau_i^{-1}(\{z : |z|>(r+1)/2 \})  \end{equation}
for all sufficiently large $n$. Since $((r+1)/2)^m>R$ for large $n$, the relation (\ref{2ndWTSoutside}) also follows.
\end{proof}

\begin{rem} If we make further assumptions on $f$ and $K$, the conclusion
\begin{equation}\nonumber \textrm{CV}(r) \subset N_\varepsilon\widehat{f(K)} \end{equation}
of Theorems  \ref{main_theorem} and \ref{rational_runge} can be improved to 
\begin{equation}\label{improvedlocation'}  \textrm{CV}(r) \subset \widehat{f(K)}, \end{equation}
which is equivalent to $\textrm{CV}(r) \subset f(K)$
if $f(K)$ is full. Indeed, if for instance the interiors of $K$, $f(K)$ are analytic domains and $f: \textrm{int}(K) \rightarrow \textrm{int}(f(K))$ is proper, then a similar strategy as in the proofs of Theorems \ref{main_theorem} and \ref{rational_runge} but replacing $\Psi$ in (\ref{g_n_disc_defn}) with a conformal map $\mathbb{D}\mapsto\textrm{int}(K)$ can be used to prove (\ref{improvedlocation'}).
\end{rem}

 \begin{rem} 
We remark that while Theorem \ref{main_theorem} strictly improves on Runge's Theorem 
on polynomial approximation, the relationship between
Theorem \ref{rational_runge} and Runge's Theorem on rational
approximation is more subtle. Both show existence of rational
approximants, and only Theorem \ref{rational_runge} describes 
the critical point structure of the approximant, however the 
poles of the approximant in Theorem \ref{rational_runge} are
specified only up to a small perturbation, whereas in Runge's
Theorem they are specified exactly. We do not know whether it 
is necessary to consider perturbations of $P'$ in Theorem 
\ref{rational_runge}, or if the improvement $r^{-1}(\infty)=P'$
is possible (a related problem appears in \cite{MR4023391},
\cite{MR4099617}, \cite{2022arXiv220202410B}, where it is
known no such improvement is possible). 
 \end{rem}



\bibliographystyle{alpha}

\begin{thebibliography}{BEF22}
\bibitem[AW98]{MR1482798}
Herbert Alexander and John Wermer.
\newblock {\em Several complex variables and {B}anach algebras}, volume~35 of
  {\em Graduate Texts in Mathematics}.
\newblock Springer-Verlag, New York, third edition, 1998.

\bibitem[BEF+22]{MR4458398}
Anna~Miriam Benini, Vasiliki Evdoridou, N\'{u}ria Fagella, Philip~J. Rippon,
  and Gwyneth~M. Stallard.
\newblock Classifying simply connected wandering domains.
\newblock {\em Math. Ann.}, 383(3-4):1127--1178, 2022.

\bibitem[BF14]{MR3445628}
Bodil Branner and N\'{u}ria Fagella.
\newblock {\em Quasiconformal surgery in holomorphic dynamics}, volume 141 of
  {\em Cambridge Studies in Advanced Mathematics}.
\newblock Cambridge University Press, Cambridge, 2014.
\newblock With contributions by Xavier Buff, Shaun Bullett, Adam L. Epstein,
  Peter Ha\"{\i}ssinsky, Christian Henriksen, Carsten L. Petersen, Kevin M.
  Pilgrim, Tan Lei and Michael Yampolsky.

\bibitem[Bis15]{Bis15}
Christopher~J. Bishop.
\newblock Constructing entire functions by quasiconformal folding.
\newblock {\em Acta Math.}, 214(1):1--60, 2015.

\bibitem[BL]{BL_in_prep}
Christopher~J. {Bishop} and Kirill {Lazebnik}.
\newblock {Approximation in Multiply Connected Domains}.
\newblock {\em in preparation}.

\bibitem[BL19]{MR4023391}
Christopher~J. Bishop and Kirill Lazebnik.
\newblock Prescribing the postsingular dynamics of meromorphic functions.
\newblock {\em Math. Ann.}, 375(3-4):1761--1782, 2019.

\bibitem[BL21]{2021arXiv210104219B}
Jack {Burkart} and Kirill {Lazebnik}.
\newblock {Interpolation of Power Mappings}.
\newblock {\em arXiv e-prints}, page arXiv:2101.04219, January 2021.

\bibitem[BLU]{2022arXiv220202410B}
Christopher~J. {Bishop}, Kirill {Lazebnik}, and Mariusz {Urba{\'n}ski}.
\newblock {Equilateral Triangulations and The Postcritical Dynamics of
  Meromorphic Functions}.
\newblock {\em to appear in Math. Ann.}

\bibitem[BT21]{MR4375923}
Luka Boc~Thaler.
\newblock On the geometry of simply connected wandering domains.
\newblock {\em Bull. Lond. Math. Soc.}, 53(6):1663--1673, 2021.

\bibitem[Car54]{MR0064861}
C.~Caratheodory.
\newblock {\em Theory of functions of a complex variable. {V}ol. 2}.
\newblock Chelsea Publishing Co., New York, 1954.
\newblock Translated by F. Steinhardt.

\bibitem[DKM20]{MR4099617}
Laura~G. DeMarco, Sarah~C. Koch, and Curtis~T. McMullen.
\newblock On the postcritical set of a rational map.
\newblock {\em Math. Ann.}, 377(1-2):1--18, 2020.

\bibitem[EL87]{EL87}
Alexandre~{\`E}. Er{\"e}menko and Misha~Yu. Ljubich.
\newblock Examples of entire functions with pathological dynamics.
\newblock {\em J. London Math. Soc. (2)}, 36(3):458--468, 1987.

\bibitem[ERS20]{2020arXiv201114736E}
Vasiliki {Evdoridou}, Philip~J. {Rippon}, and Gwyneth~M. {Stallard}.
\newblock {Oscillating simply connected wandering domains}.
\newblock {\em arXiv e-prints}, page arXiv:2011.14736, November 2020.

\bibitem[For03]{MR2051397}
Franc Forstneri\v{c}.
\newblock Noncritical holomorphic functions on {S}tein manifolds.
\newblock {\em Acta Math.}, 191(2):143--189, 2003.

\bibitem[Gar81]{MR628971}
John~B. Garnett.
\newblock {\em Bounded analytic functions}, volume~96 of {\em Pure and Applied
  Mathematics}.
\newblock Academic Press, Inc. [Harcourt Brace Jovanovich, Publishers], New
  York-London, 1981.

\bibitem[GMR17]{MR3753897}
Stephan~Ramon Garcia, Javad Mashreghi, and William~T. Ross.
\newblock Finite {B}laschke products: a survey.
\newblock In {\em Harmonic analysis, function theory, operator theory, and
  their applications}, volume~19 of {\em Theta Ser. Adv. Math.}, pages
  133--158. Theta, Bucharest, 2017.

\bibitem[Gru78]{MR0463413}
Helmut Grunsky.
\newblock {\em Lectures on theory of functions in multiply connected domains},
  volume~4 of {\em Studia Mathematica: Skript}.
\newblock Vandenhoeck \& Ruprecht, G\"{o}ttingen, 1978.

\bibitem[Kha86]{Khavinsonref}
S.~Ya. Khavinson.
\newblock {\em Two Papers on Extremal Problems in Complex Analysis}, volume
  129.
\newblock American Mathematical Society Translations: Series 2, 1986.
\newblock Translated by D. Khavinson.

\bibitem[MPS20]{MR4008367}
David Mart\'{\i}-Pete and Mitsuhiro Shishikura.
\newblock Wandering domains for entire functions of finite order in the
  {E}remenko-{L}yubich class.
\newblock {\em Proc. Lond. Math. Soc. (3)}, 120(2):155--191, 2020.

\bibitem[MRW21]{2021arXiv210810256M}
David {Mart{\'\i}-Pete}, Lasse {Rempe}, and James {Waterman}.
\newblock {Eremenko's conjecture, wandering Lakes of Wada, and maverick
  points}.
\newblock {\em arXiv e-prints}, page arXiv:2108.10256, August 2021.

\bibitem[MRW22]{2022arXiv220411781M}
David {Mart{\'\i}-Pete}, Lasse {Rempe}, and James {Waterman}.
\newblock {Bounded Fatou and Julia components of meromorphic functions}.
\newblock {\em arXiv e-prints}, page arXiv:2204.11781, April 2022.

\bibitem[Run85]{MR1554664}
C.~Runge.
\newblock Zur {T}heorie der {E}indeutigen {A}nalytischen {F}unctionen.
\newblock {\em Acta Math.}, 6(1):229--244, 1885.

\bibitem[Six18]{MR3852466}
David~J. Sixsmith.
\newblock Dynamics in the {E}remenko-{L}yubich class.
\newblock {\em Conform. Geom. Dyn.}, 22:185--224, 2018.

\bibitem[Tot16]{MR3556367}
Vilmos Totik.
\newblock The {G}auss-{L}ucas theorem in an asymptotic sense.
\newblock {\em Bull. Lond. Math. Soc.}, 48(5):848--854, 2016.

\bibitem[Tot22]{MR4423276}
Vilmos Totik.
\newblock A quantitative {G}auss-{L}ucas theorem.
\newblock {\em Ark. Mat.}, 60(1):195--212, 2022.

\end{thebibliography}



\end{document}